\newtheorem{theo}{Theorem}[section]
\newtheorem{defi}[theo]{Definition}
\newtheorem{lem}[theo]{Lemma}
\newtheorem{prop}[theo]{Proposition}
\newtheorem{cor}[theo]{Corollary}
\newtheorem{rem}[theo]{Remark}
\newtheorem{exa}[theo]{Example}
\numberwithin{equation}{section}
\newcommand{\w}{\omega}
\newcommand{\R}{\mathbb{R}} % reals
\newcommand{\C}{\mathbb{C}} % complexes
\newcommand{\Z}{\mathbb{Z}} % integers
\newcommand{\N}{\mathbb{N}} % naturals
\newcommand{\RR}{\mathfrak{S}}   % LCA ambient space
\newcommand{\ZZ}{\Delta}      % lattice subgroup
\newcommand{\ZZdual}{\ZZ^\bot} % annihilator of the lattice
\newcommand{\id}{\mathrm{e}}   % identity of the group G
\newcommand{\T}{\mathcal{T}}           % periodization mapping
\newcommand{\J}{\mathcal{J}}           % range function
\newcommand{\pj}{\mathcal{P}}          % projection for the range function
\newcommand{\A}{\Phi}                  % countable family of generators
\newcommand{\Ga}{\Gamma}              %\Gamma%
\newcommand{\B}{\mathcal{B}}   % bounded operators
\newcommand{\U}{\mathcal{U}}   % unitary operators
\newcommand{\F}{\mathcal{F}}   % Fourier transform
\newcommand{\Hil}{\mathcal{H}} % a Hilbert space
\newcommand{\vsp}{\textnormal{span}}
\newcommand{\ol}[1]{\overline{#1}}
\newcommand{\wh}[1]{\widehat{#1}}
\newcommand{\esssup}{{\mathrm{ess}\sup}}
\newcommand{\tr}[1]{T_{#1}}
\newcommand{\rot}[1]{R_{#1}}
\newcommand{\bigpoplus}{\overset{.}{\bigoplus}}
\newcommand{\poplus}{\overset{.}{\oplus}}
\title{The structure of group preserving operators}
\author[Barbieri, Cabrelli, Carbajal, Hern\'andez and Molter]{D. Barbieri, C. Cabrelli, D. Carbajal, E. Hern\'andez and U. Molter}
\subjclass[2010]{41A65, 47A15, 43A70, 20H15} 
\keywords{Invariant subspaces;  Parseval frames,  normal operators, diagonalization, range operators, direct integrals}
\begin{document}
	
	\allowdisplaybreaks[2]
	
	\begin{abstract}
	In this paper, we prove the existence of a particular diagonalization for normal bounded operators  defined on subspaces
	of $L^2(\RR)$ where $\RR$ is a second countable LCA group. 
	The subspaces where the operators act are invariant under the action of a group $\Gamma$ which is a semi-direct product of a uniform lattice of $\RR$   with a discrete group of automorphisms. This class includes the crystal groups which are important in applications as models for images. The operators are assumed to be 
	$\Gamma$ preserving. i.e. they commute with the action of $\Gamma$. In particular we obtain a spectral decomposition for these operators.
	
	This  generalizes recent results on shift-preserving operators acting on lattice invariant  subspaces where $\RR$ is the Euclidean space.
	\end{abstract}

	\maketitle
		\section{Introduction}
	
    Let $\Gamma$ be a discrete group, not necessarily commutative, acting on $L^2(\RR)$, where $\RR$ is a second countable LCA group.

In this article, we study the structure of bounded operators acting on  subspaces of $L^2(\RR)$ that are invariant
under the action of the group $\Gamma$ (which we call $\Ga$-invariant subspaces).  Our operators are required to be  $\Gamma$-preserving. This means that they
commute with the action of $\Gamma$. 
	
A recent paper \cite{ACCP}, studied the case where $\RR$ is the $d$-dimensional additive group $\R^d$ and the group $\Ga$ is the lattice $\Delta =\Z^d$  acting by translations on $L^2(\R^d)$.
The authors considered $\Delta$-preserving operators acting on finitely generated $\Delta$-invariant spaces.

  They introduced the notion of $\Delta$-eigenvalue and $\Delta$-diagonalization (see definition in Section \ref{sec:s-diag}) and proved that if $L:V\rightarrow V$ is a bounded normal $\ZZ$-preserving operator defined on a finitely generated $\Delta$-invariant space $V$, then
  $L$ is {\it $\Delta$-diagonalizable}, that is, there exist $r \in \N$ and $\Delta$-invariant subspaces $V_1,\dots,V_r $ such that:
  $$V = V_{1}\poplus\dots\poplus V_{r},$$
  where the sum is orthogonal and the subspaces are invariant under $L$. 
  The action of $L$ on each $f \in V_j$ is given by 
  $Lf = \Lambda_{a_j} f$ with $\Lambda_{a_j}= \sum_{k\in\ZZ} a_{j}(k) T_k$, and $T_k$ denotes the translation by $k\in\ZZ$.
  The operator $\Lambda_{a_j} $ is called a {\it $\Delta$-eigenvalue},
  and is defined by some sequence $a_j \in \ell_2(\ZZ),\; j=1,\dots,r$. As a consequence we have the following formula for $L$:
  $$L=  \sum_{j=1}^r \Lambda_{a_j} P_{V_{j}},$$
  where $P_{V_{j}}$ denotes the orthogonal projection onto $V_{j}$. 
  
  This type of decomposition of a $\ZZ$-preserving operator describes in a simple and compressed way the action of $L$ and is reminiscent of the spectral theorem for normal matrices. The finiteness of the decomposition is a consequence of the fact that the invariant space on which $L$ acts is finitely generated.
  
  In the first part of this paper, we extend this result to the group setting ($\Delta$ is a uniform lattice of a second countable group $\RR$) and we remove the requirement that the invariant space is  finitely generated.  We are able to prove the $\Delta$-diagonalization for normal bounded $\Delta$-preserving operators, mentioned before, satisfying some additional properties.
  
  The main difficulty in pursuing this, is a question about measurability that can not be solved using the arguments of the finitely generated case. We need to resort to the theory of set-valued maps and results on measurable selections, in particular, Castaign's Selection Theorem  (see subsection \ref{MSVM}).
  
  The key tool in the analysis is the characterization of invariant spaces by Helson \cite{Hel1964} through {\it measurable range functions }
  and the decomposition of $\Delta$-preserving operators by {\it measurable range operators} (or direct integrals of operators),
  see \cite{BI2019}.
  
  In the second part of the paper, we extend this decomposition   to $\Ga$-invariant spaces.
  More exactly, we consider a non-commutative group $\Ga$ that is  a semidirect product $\Gamma = \ZZ \rtimes G $. Here, $\ZZ$ is a discrete cocompact subgroup of $\RR$ and $G$ is a discrete and countable group that acts on $\RR$ by continuous automorphisms preserving $\ZZ$. (See Section \ref{sec:Gamma-inv} for details). These groups are important in applications as models for images since they include, as a particular case, rigid movements. See \cite{{BCHMSPIE2019}} and \cite{Mallat2012} for  applications to image processing.
 
 The structure of $\Ga$-invariant spaces has been studied in great detail in a recent paper \cite{BCHM}.
%There, the authors characterize the $\Ga$-invariant spaces through range functions adapted to the group $\Ga$.
In this article, we consider $\Ga$-preserving operators in this general setting. %i.e. operators that commute with the action of $\Ga$, on $\Ga$-invariant subspaces.
 In order to obtain a diagonalization for these operators we need to define what we mean by $\Ga$-eigenvalues, defined previously for the case of uniform lattices.

Because $\Ga$-invariant subspaces are a particular subclass of $\Delta$-invariant subspaces with extra restrictions,
$\Ga$-eigenvalues should be $\Delta$-eigenvalues with some special property,  due to the action of the group $G$ (see Proposition \ref{prop:Lambda_a-G-preserving}).
Finally, using this, we are able to obtain the desired diagonalization that we call $\Ga$-diagonalization,  	
 
 The paper is organized as follows: Section \ref{largamos} sets the notation that we will use throughout  the paper  and contains all the definitions and properties needed for the diagonalization results. We start in Subsection \ref{sec:SIS} describing the structure of $\Delta$-invariant subspaces and its characterizations through measurable range functions.
 We consider $\Delta$-preserving operators and its associated range operators in Subsection \ref{sec:SP}.
 In Subsection \ref{MSVM}, the definition and basic properties of measurable set-valued maps and a result on measurable selections, are described. Then, we show results on the relationships between  the spectrum of a $\Delta$-preserving
 operator and the spectrum of its range operator in Subsection \ref{spectrum}.
 Sections \ref{sec:s-diag} and \ref{sec:Gamma-inv} contain the main results of this paper. In Section \ref{sec:s-diag} we prove the $\ZZ$-diagonalization in the setting of groups for $\Delta$-invariant spaces not necessarily finitely generated
 and finally in Section \ref{sec:Gamma-inv} we treat the $\Ga$-diagonalization case.

  	\section{Preliminaries}\label{largamos}
   	
	Throughout this work, $\RR$ will be a second countable LCA group and $\ZZ$ will be a uniform lattice of $\RR$ (that is, a  discrete subgroup such that $\RR/\ZZ$ is compact). We will denote by $\wh{\RR}$ the Pontryagin dual of $\RR$ and we will write the characters of $\RR$ indistinctly by
  	$$
  	\langle \xi , x \rangle = e^{2\pi i \xi . x} \ , \quad \xi \in \wh{\RR} , x \in \RR.
  	$$
  	Moreover, the annihilator of $\ZZ$ will be denoted by $$\ZZdual = \{ \ell \in \wh{\RR} \, : \, \langle \ell, k \rangle = 1 \ \forall \, k \in \ZZ\}.$$
  	The Haar measure on $\RR$ of a measurable set $E \subset \RR$ will be denoted by $|E|$. 
  	Furthermore, $\Omega \subset \wh{\RR}$ will always denote a Borel section of $\wh{\RR}/\ZZ^\bot$.
  	
  	We will use the following notation for the Fourier transform of $f \in L^1(\RR)$:
  	$$
  	\F f (\xi) = \wh{f}(\xi) = \int_{\RR} e^{-2\pi i \xi . x} f(x) dx = \int_{\RR} \ol{\langle \xi , x \rangle} f(x) dx,
  	$$
  	which extends by density to an isometric isomorphism in $L^2(\RR)$.
  	
  	We denote by $T : \ZZ \to \U(L^2(\RR))$ the unitary representation defined by 
  	$$
  	\tr{k}f(x) = f(x - k) \ , \quad f \in L^2(\RR), \,k\in \ZZ.
  	$$
  	Note that for all $f \in L^2(\RR)$ and all $k \in \ZZ$, the following relation holds:
  	\begin{equation}\label{relations-tau-Tk}
  	\wh{\tr{k} f}(\xi) = e^{-2\pi i \xi . k} \wh{f}(\xi), \quad \xi\in\wh{\RR}.
  	\end{equation}
  	
  	If $\Hil$ is a Hilbert space, we  denote by $\B(\Hil)$ the linear and  bounded operators from $\Hil$ into $\Hil$. Given an operator $A\in\mathcal B(\Hil)$ we  denote by $\sigma(A)$ its operator spectrum. The point spectrum of $A$, that is the set of its eigenvalues, is denoted by $\sigma_p(A)$.
  	
  	A normal operator $A\in\mathcal B(\Hil)$ is called diagonalizable if $\Hil$ admits an orthonormal basis consisting of eigenvectors of $A$. We will use the symbol $\poplus$ to denote an orthogonal sum. 
  	
  	\subsection{$\ZZ$-invariant spaces} \label{sec:SIS}
		
	We begin by introducing some important notions on $\ZZ$-invariant spaces, also known as shift-invariant spaces by translations of $\ZZ$.
	\begin{defi}
		A closed subspace $V \subset L^2(\RR)$ is $\ZZ$-invariant if $\tr{k}V \subset V$ for all $k \in \ZZ$.
	\end{defi}
	
	Given a countable set of functions $\A \subset L^2(\RR)$, we will denote 
	$$
	S(\A):= \ol{\vsp}\{\tr{k}\varphi \, : \, k \in \ZZ, \varphi \in \A\}\,.
	$$
	Since $L^2(\RR)$ is separable, if $V$ is a $\ZZ$-invariant subspace of $L^2(\RR)$, there exists a countable set $\A \subset L^2(\RR) $ such that $V = S(\A).$ In this case, we say that $\A$ is a set of generators of $V$. Moreover, if $V$ admits a finite set of generators, we say that $V$ is finitely generated, and if $V=S(\varphi)$ for $\varphi \in L^2(\RR)$ we say that $V$ is principal. 
	
	\begin{defi}\label{def:mappingT}
		For $f\in L^2(\RR)$ and $\omega \in \wh{\RR}$ define formally the fiberization map $\T$ as
		\begin{equation}\label{eqn:mappingT}
		\T[f] (\omega) = \{\wh{f}(\omega + \ell)\}_{\ell \in \ZZdual}.
		\end{equation}
	\end{defi}
	
	The fiberization map  $\T$ is an isometric isomorphism between the Hilbert spaces $L^2(\RR)$ and $ L^2(\Omega,\ell_2(\ZZdual))$, see \cite[Proposition 3.3]{CP2010}. Observe that by \eqref{relations-tau-Tk}, for every $f\in L^2(\RR)$ and $k\in\ZZ$, we have the following intertwining property:
	\begin{equation}\label{eq:Tau-intertwining}
		\T[T_k f](\w)=e^{-2\pi i \w .k} \T[f](\w),\quad \w\in\Omega.
	\end{equation}
		
	The following map, first introduced by Helson \cite{Hel1964}, is fundamental in the theory of shift-invariant spaces.
	
	\begin{defi}\label{def:range-function}
		%Let $\Omega \subset \wh{\RR}$ be a Borel section of $\wh{\RR} / \ZZdual \approx \wh{\ZZ}$.
		A range function is a map
		$$
		\J : \Omega \to \{\textnormal{closed subspaces of } \ell_2(\ZZdual)\} .
		$$
	\end{defi}
	
	We say that a range function $\J$ is {\it measurable} if $\omega \mapsto \langle \pj_{\J(\omega)}a,b\rangle_{\ell_2(\ZZdual)}$ is measurable for all $a, b \in \ell_2(\ZZdual)$, where $\pj_{\J(\omega)} \in \B(\ell_2(\ZZdual))$ is the orthogonal projection onto $\J(\omega)$. 
	
	The next theorem is due to Helson \cite{Hel1964} and Bownik \cite{Bow2000} in the Euclidean setting. We state its generalization to the setting of LCA groups as it appears in \cite{CP2010}.
	
	\begin{theo}[\cite{Bow2000,CP2010}]\label{Th:Helson}
		Let $V$ be a closed subspace of $L^2(\RR)$ and $\T$ the map of Definition \ref{def:mappingT}. The subspace $V$ is $\ZZ$-invariant if and only if there exists a unique measurable range function $\J_V$ such that %$\T(\V)=M_{\J_\V}$.
		$$V=\left\{f\in L^2(\RR)\,:\, \T[f](\w)\in \J_V(\w),\,a.e. \ \w\in\Omega\right\}.$$
		
		Moreover, if $V=S(\A)$ for some countable set $\A$ of $L^2(\RR)$, the measurable range function associated to $S(\A)$ satisfies
		$$
		\J_V(\omega) =\ol{\vsp}\{\T[\varphi](\omega) \, : \, \varphi \in \A\}, \quad a.e. \ \omega \in \Omega.
		$$
	\end{theo}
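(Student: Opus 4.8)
The plan is to transport the statement through the fiberization isometry $\T\colon L^2(\RR)\to L^2(\Omega,\ell_2(\ZZdual))$ and reduce it to a description of the closed subspaces of $L^2(\Omega,\ell_2(\ZZdual))$ that are modules over $L^\infty(\Omega)$. First, by the intertwining identity \eqref{eq:Tau-intertwining}, under $\T$ each translation $\tr k$ becomes multiplication by the exponential $e_k(\w)=e^{-2\pi i\w.k}$; hence $V$ is $\ZZ$-invariant if and only if $M:=\T(V)$ is invariant under multiplication by every $e_k$, $k\in\ZZ$. Since $\Omega$ is a Borel section of the \emph{compact} group $\wh\RR/\ZZdual$ and $\{e_k\}_{k\in\ZZ}$ is (a constant multiple of) an orthonormal basis of $L^2(\Omega)$, a standard approximation argument (bounded trigonometric-polynomial approximation via a Fej\'er-type summability kernel on $\wh\RR/\ZZdual$, together with dominated convergence and the closedness of $M$) upgrades this to: $M$ is invariant under multiplication by every $g\in L^\infty(\Omega)$. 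Thus the theorem becomes the assertion that the closed subspaces of $L^2(\Omega,\ell_2(\ZZdual))$ invariant under multiplication by all of $L^\infty(\Omega)$ are exactly the sets $M_\J=\{F:F(\w)\in\J(\w)\text{ a.e.}\}$ for measurable range functions $\J$, with $\J$ unique a.e.

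One inclusion is clear: for a measurable $\J$, the pointwise conditions defining $M_\J$ are preserved under $L^2$-limits and under multiplication by $L^\infty(\Omega)$, so $M_\J$ is a closed $L^\infty(\Omega)$-module. For the converse, given such an $M$, pick a countable dense set $\{F_n\}\subset M$ and set $\J(\w)=\ol{\vsp}\{F_n(\w):n\in\N\}$; measurability of $\J$ (i.e.\ of $\w\mapsto\langle\pj_{\J(\w)}a,b\rangle$) follows by performing Gram--Schmidt measurably on the vector fields $\w\mapsto F_n(\w)$. Then $M\subseteq M_\J$ is immediate. For $M_\J\subseteq M$: since $M$ and $M^\perp$ are both $L^\infty(\Omega)$-modules, the orthogonal projection $P$ onto $M$ commutes with every multiplication operator, hence decomposes as a measurable field $P=\int^{\oplus}_{\Omega}P(\w)\,d\w$ of orthogonal projections on $\ell_2(\ZZdual)$, with $M=\{F:F(\w)\in\operatorname{ran} P(\w)\text{ a.e.}\}$; comparing ranges with the $F_n$ gives $\operatorname{ran} P(\w)=\J(\w)$ a.e., so any $G\in M_\J$ satisfies $P(\w)G(\w)=G(\w)$ a.e., i.e.\ $G\in M$.

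Uniqueness can be read off the same decomposition, or argued directly: if $M_{\J_1}=M_{\J_2}$ but $\J_1\neq\J_2$ on a positive-measure set, a measurable selection of unit vectors from $\J_1(\w)\ominus\J_2(\w)$ over that set produces an element of $M_{\J_1}\setminus M_{\J_2}$, a contradiction. Finally, for the ``moreover'' part, applying $\T$ and \eqref{eq:Tau-intertwining} to $V=S(\A)$ shows that $\T(V)$ is the smallest closed $L^\infty(\Omega)$-module containing $\{\T[\varphi]:\varphi\in\A\}$. Since $\A$ is countable and each $\T[\varphi]\in V$, simultaneously for all $\varphi$ one gets $\T[\varphi](\w)\in\J_V(\w)$ a.e., hence $\J_V(\w)\supseteq\ol{\vsp}\{\T[\varphi](\w):\varphi\in\A\}=:\J'(\w)$ a.e.; conversely $\J'$ is a measurable range function (countably many measurable generators), and $M_{\J'}$ is a closed $L^\infty(\Omega)$-module containing every $\T[\varphi]$, so it contains $\T(V)$, forcing $\J_V\subseteq\J'$ a.e.

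I expect the main obstacle to be the module-characterization step, and within it the measurability bookkeeping: producing a \emph{measurable} range function out of an abstract closed invariant subspace (equivalently, decomposing the projection $P$ as a measurable field of projections, or running Gram--Schmidt measurably on a dense sequence), and the measurable selection of unit vectors from the fibers used in the uniqueness argument. This is precisely the set-valued-map / measurable-selection machinery (Castaing's theorem) set up in Subsection \ref{MSVM}; everything else is soft functional analysis, direct-integral theory, and bounded approximation.
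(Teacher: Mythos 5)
The paper does not prove this theorem itself---it is quoted from \cite{Bow2000,CP2010}---and your argument is essentially the standard proof given in those sources (going back to Helson): fiberize, reduce to closed $L^\infty(\Omega)$-multiplication-invariant subspaces of $L^2(\Omega,\ell_2(\ZZdual))$, and characterize these via measurable range functions by decomposing the orthogonal projection onto such a subspace as a measurable field of pointwise projections. Your sketch is correct, and the two points genuinely requiring care---the bounded approximation upgrading invariance under the characters $e^{-2\pi i\w.k}$ to invariance under all of $L^\infty(\Omega)$ (equivalently the determining-set argument of \cite{BI2019}), and the measurable decomposition of the projection---are exactly the ones you flag.
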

	
	\begin{rem}\label{range-unicidad}
		Uniqueness of the the measurable range function is understood in the following sense: two range functions $\J_1$ and $\J_2$ are equal if $\J_1(\omega) = \J_2(\omega)\  a. e. \ \omega \in \Omega.$ 
	\end{rem}
	
	From now on, given a $\ZZ$-invariant space $V$, we will simply write its associated range function as $\J$ when it is clear from the context that we are referring to $\J_V$.
	
	Given a range function $\J$, the  space 
	\begin{equation}\label{eq:MJ}
	\mathcal M_{\J} = \{F\in L^{2}\left(\Omega,\ell^{2}(\ZZdual)\right) : F(\omega)\in \J(\omega),\,\text{for a.e. }\w\in\Omega \},
	\end{equation}
	is a closed {\it multiplicative-invariant} subspace of $L^2\left(\Omega,\ell_2(\ZZdual)\right)$, i.e. for every $F \in \mathcal M_{\J}$ we have 	that $\psi F \in \mathcal M_{\J}$ for all $\psi \in L^\infty(\Omega)$. By Theorem \ref{Th:Helson}, if $V$ is a $\ZZ$-invariant space with 		range function $\J$, we have that $\T[V]= \mathcal M_{\J}$.
	
	The following identity is due to Helson \cite{Hel1964},
	\begin{equation}\label{eq:proj}
	(P_{\mathcal M_\J}F)(\w) = P_{\J(\w)}(F(\w)),\quad\forall\, F\in L^2(\Omega,\ell_2(\ZZdual)), \text{ a.e. }\w\in\Omega,
	\end{equation}
	and as a consequence, the next proposition holds.
	
	\begin{prop}\label{prop:perp}
		Let $V\subset L^2(\RR)$ be a $\ZZ$-invariant space with range function $\J_V$. Then $V^\perp$ is also a $\ZZ$-invariant space with range function 
		$$\J_{V^\perp}(\w)=(\J_V(\w))^\perp, \text{ a.e. }\w\in\Omega.$$
	\end{prop}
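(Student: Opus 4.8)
The plan is to transport the whole statement through the fiberization isometry $\T$ and then read it off from the fiberwise form \eqref{eq:proj} of the orthogonal projection. First I would record that $V^\perp$ is $\ZZ$-invariant: each $\tr{k}$ is unitary with inverse $\tr{-k}$, so $\ZZ$-invariance of $V$ self-improves to $\tr{k}V=V$ for all $k\in\ZZ$, and then for $g\in V^\perp$, $f\in V$ we get $\langle \tr{k}g,f\rangle=\langle g,\tr{-k}f\rangle=0$, whence $\tr{k}g\in V^\perp$. By Theorem \ref{Th:Helson}, $V^\perp$ thus has a unique measurable range function $\J_{V^\perp}$, and the task is to identify it with $\w\mapsto(\J_V(\w))^\perp$.

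Since $\T$ is an isometric isomorphism of $L^2(\RR)$ onto $L^2(\Omega,\ell_2(\ZZdual))$, it sends orthogonal complements to orthogonal complements, so $\T[V^\perp]=(\T[V])^\perp=(\mathcal M_{\J_V})^\perp$ and therefore $P_{\T[V^\perp]}=I-P_{\mathcal M_{\J_V}}$. Plugging this into \eqref{eq:proj} applied to $V$, for every $F\in L^2(\Omega,\ell_2(\ZZdual))$ and a.e. $\w\in\Omega$ one gets
$$
(P_{\T[V^\perp]}F)(\w)=F(\w)-P_{\J_V(\w)}(F(\w))=P_{(\J_V(\w))^\perp}(F(\w)).
$$
In particular the map $\wt{\J}(\w):=(\J_V(\w))^\perp$ is a range function, and it is measurable because $\w\mapsto\langle P_{(\J_V(\w))^\perp}a,b\rangle=\langle a,b\rangle-\langle P_{\J_V(\w)}a,b\rangle$ is measurable for all $a,b\in\ell_2(\ZZdual)$. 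The displayed identity also says exactly that $\T[V^\perp]=\mathcal M_{\wt{\J}}$, so the uniqueness clause of Theorem \ref{Th:Helson} forces $\J_{V^\perp}=\wt{\J}$, which is the claim. Equivalently, comparing the display with \eqref{eq:proj} for $V^\perp$ gives $P_{\J_{V^\perp}(\w)}(F(\w))=P_{(\J_V(\w))^\perp}(F(\w))$ for a.e. $\w$ and every $F$, and then one upgrades to $P_{\J_{V^\perp}(\w)}=P_{(\J_V(\w))^\perp}$ a.e. by testing against a countable dense set of constant functions, which lie in $L^2(\Omega,\ell_2(\ZZdual))$ since $\ZZ$ is cocompact and hence $|\Omega|<\infty$.

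There is essentially no hard step here: the content is packaged in \eqref{eq:proj} and in the unitarity of $\T$. The only point that needs a little care is the measure-theoretic bookkeeping — verifying measurability of $\wt{\J}$ and, in the second approach, the swap of the ``for all $F$ / a.e. $\w$'' quantifiers — but both are handled routinely by separability of $\ell_2(\ZZdual)$ and finiteness of $|\Omega|$.
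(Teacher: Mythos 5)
Your proof is correct and follows exactly the route the paper intends: the paper states this proposition as an immediate consequence of Helson's identity \eqref{eq:proj} together with the unitarity of $\T$, which is precisely your argument (fiberize, identify $P_{\T[V^\perp]}=I-P_{\mathcal M_{\J_V}}$, read off the fiberwise projection, and invoke uniqueness in Theorem \ref{Th:Helson}). The extra details you supply — the $\ZZ$-invariance of $V^\perp$ and the measurability of $\w\mapsto(\J_V(\w))^\perp$ — are exactly the routine checks the paper leaves implicit.
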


	The result below gives a characterization of frames of translations of a $\ZZ$-invariant space $V$ in terms of its fibers, see \cite{Bow2000,CP2010}.

	\begin{theo}[\cite{Bow2000,CP2010}]\label{thm:frame-fiber}
		Let $\Phi\subset L^2(\RR)$ be a countable set. Then the following conditions are equivalent:
		\begin{enumerate}
			\item The system $\{\tr{k}\varphi \, : \, k \in \ZZ, \varphi \in \A\}$ is a frame of $V$ with bounds $A,B>0$.
			\item The system $\left\{\mathcal T [\varphi](\omega)\,:\,\varphi\in\Phi\,\right\}\subset\ell_2(\ZZdual)$ is a frame of $\J(\omega)$ with uniform bounds $A,B>0$ for a.e. $\omega\in\Omega$.
		\end{enumerate}
	\end{theo}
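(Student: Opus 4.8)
I would prove this by carrying both frame inequalities through the isometric isomorphism $\T$ and reducing them to a pointwise (in $\omega$) statement about the analysis maps on the fibers. Write $V=S(\Phi)$, so that by Theorem~\ref{Th:Helson} the range function $\J$ of $V$ satisfies $\J(\omega)=\ol{\vsp}\{\T[\varphi](\omega):\varphi\in\Phi\}$ for a.e.\ $\omega\in\Omega$ and $\T[V]=\mathcal M_{\J}$. The implication $(2)\Rightarrow(1)$ will come out by integration, while $(1)\Rightarrow(2)$ is the substantial direction and is proved by a localization argument which, unlike the diagonalization results of Sections~\ref{sec:s-diag} and~\ref{sec:Gamma-inv}, does not need measurable selections.

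First I would record two identities. Since $\T$ is an isometry, $\|f\|^2=\int_\Omega\|\T[f](\omega)\|_{\ell_2(\ZZdual)}^2\,d\omega$ for every $f\in L^2(\RR)$. Second, the intertwining relation~\eqref{eq:Tau-intertwining} gives, for $f\in L^2(\RR)$, $\varphi\in\Phi$ and $k\in\ZZ$, the formula $\langle f,\tr{k}\varphi\rangle=\int_\Omega g_\varphi(\omega)\,\ol{e^{-2\pi i\omega.k}}\,d\omega$, where $g_\varphi(\omega):=\langle\T[f](\omega),\T[\varphi](\omega)\rangle_{\ell_2(\ZZdual)}$; that is, $\langle f,\tr{k}\varphi\rangle$ is the $k$-th Fourier coefficient of $g_\varphi$ on $\Omega$. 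Since $\{\omega\mapsto e^{-2\pi i\omega.k}\}_{k\in\ZZ}$ is an orthonormal basis of $L^2(\Omega)$ (Plancherel on the compact group $\wh{\RR}/\ZZdual$, with the Haar measure normalized accordingly), Parseval together with Tonelli yields the master identity
\begin{equation*}
\sum_{\varphi\in\Phi}\sum_{k\in\ZZ}|\langle f,\tr{k}\varphi\rangle|^2=\int_\Omega\sum_{\varphi\in\Phi}\big|\langle\T[f](\omega),\T[\varphi](\omega)\rangle_{\ell_2(\ZZdual)}\big|^2\,d\omega,
\end{equation*}
valid in $[0,\infty]$. From here $(2)\Rightarrow(1)$ is immediate: if $f\in V$ then $\T[f](\omega)\in\J(\omega)$ a.e., so applying the fiber frame inequalities of $(2)$ at $v=\T[f](\omega)$ and integrating, the two identities give $A\|f\|^2\le\sum_{\varphi\in\Phi}\sum_{k\in\ZZ}|\langle f,\tr{k}\varphi\rangle|^2\le B\|f\|^2$.

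For $(1)\Rightarrow(2)$ I would argue by localization. Let $\{G_n\}_{n\in\N}\subset\mathcal M_{\J}=\T[V]$ be the countable set of all $(\mathbb{Q}+i\mathbb{Q})$-linear combinations of $\{\T[\varphi]:\varphi\in\Phi\}$, and set $s_n(\omega):=\sum_{\varphi\in\Phi}|\langle G_n(\omega),\T[\varphi](\omega)\rangle|^2$; then for a.e.\ $\omega$ the set $\{G_n(\omega)\}_n$ is a $(\mathbb{Q}+i\mathbb{Q})$-linear subspace of $\J(\omega)$ that is dense in it. For each $n$ and each $\psi\in L^\infty(\Omega)$ the function $\psi G_n$ lies in the multiplicative-invariant space $\mathcal M_{\J}$, hence equals $\T[h]$ for some $h\in V$; inserting such $h$ into $(1)$ and using the two identities gives, for every measurable $E\subset\Omega$ (take $|\psi|^2=\one{E}$),
\begin{equation*}
A\int_E\|G_n(\omega)\|^2\,d\omega\ \le\ \int_E s_n(\omega)\,d\omega\ \le\ B\int_E\|G_n(\omega)\|^2\,d\omega,
\end{equation*}
and since $E$ is arbitrary, $A\|G_n(\omega)\|^2\le s_n(\omega)\le B\|G_n(\omega)\|^2$ for a.e.\ $\omega$. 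Discarding the countable union over $n$ of these exceptional null sets, we are left with a full-measure set of $\omega$ on which the bounds hold for every $G_n(\omega)$ at once. On that set, the upper bound together with $(\mathbb{Q}+i\mathbb{Q})$-linearity shows that the analysis map $v\mapsto\{\langle v,\T[\varphi](\omega)\rangle\}_{\varphi}$ is bounded by $\sqrt B$ on the dense subspace $\{G_n(\omega)\}_n$, hence extends by continuity to a bounded $C(\omega):\J(\omega)\to\ell_2(\Phi)$ with $\|C(\omega)\|\le\sqrt B$ whose $\varphi$-th coordinate is still $\langle\,\cdot\,,\T[\varphi](\omega)\rangle$; this is the fiber Bessel bound $\sum_{\varphi\in\Phi}|\langle v,\T[\varphi](\omega)\rangle|^2\le B\|v\|^2$ for all $v\in\J(\omega)$. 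Finally, now that $v\mapsto\|C(\omega)v\|^2=\sum_{\varphi\in\Phi}|\langle v,\T[\varphi](\omega)\rangle|^2$ is a continuous function of $v\in\J(\omega)$, the lower bound---already known on the dense set $\{G_n(\omega)\}_n$---passes to all of $\J(\omega)$ by continuity, which is precisely $(2)$.

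The points that require care, none of them individually serious, are: producing one countable family whose fibers are dense in $\J(\omega)$ for a.e.\ $\omega$ (this rests on Theorem~\ref{Th:Helson} and the separability of $L^2(\RR)$); the routine but essential step from ``$\int_E(\cdot)\le\int_E(\cdot)$ for every measurable $E$'' to an a.e.\ inequality; and, above all, the order of the last two steps---one must first secure the fiber Bessel bound, so that $C(\omega)$ is genuinely a bounded operator, and only then extend the lower frame bound from a dense subset by continuity. The Fourier-analytic identities of the second paragraph I expect to be routine, modulo bookkeeping the normalization of the Haar measures on $\RR$ and $\wh{\RR}$.
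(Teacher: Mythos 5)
Your argument is correct, and it is essentially the standard proof of this result: the paper itself states Theorem \ref{thm:frame-fiber} without proof, citing \cite{Bow2000,CP2010}, and the proofs there proceed exactly as you do, via the periodization identity $\sum_{\varphi,k}|\langle f,\tr{k}\varphi\rangle|^2=\int_\Omega\sum_{\varphi}|\langle\T[f](\omega),\T[\varphi](\omega)\rangle|^2\,d\omega$ together with a countable family of rational-coefficient combinations whose fibers are dense in $\J(\omega)$ and an ``integrate over arbitrary $E$, then localize a.e.'' step. Your care about first securing the fiber Bessel bound before extending the lower bound by continuity, and about the identity holding in $[0,\infty]$ (where one should note that if $g_\varphi\in L^1(\Omega)\setminus L^2(\Omega)$ then both sides are infinite, by uniqueness of Fourier coefficients), matches the cited treatments.
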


%	\begin{defi}
%		The length of a finitely generated $\ZZ$-invariant space $V\subset L^{2}(\RR)$ is denoted by $\mathcal{L}(V)$ and defined as the smallest natural number $\ell$ such that there exist $\varphi_{1},...,\varphi_{\ell} \in V$ with $V=S(\varphi_{1},...,\varphi_{\ell})$.
%		%An equivalent definition of $\mathcal{L}(V)$ in terms of the range function $\J$ associated to $V$ is $$\mathcal{L}(V) = {\esssup}_{\omega\in\Omega} \dim \J(\omega).$$
%	\end{defi}	
	
	\begin{defi}
		The {\it spectrum} of a $\ZZ$-invariant space $V$ with range function $\J$ is defined by $$ \Sigma(V) = \left\{\omega \in \Omega\,:\, \dim \J(\omega) > 0 \right\}.$$
	\end{defi}

	The result we state next gives a decomposition of a $\ZZ$-invariant space into an orthogonal sum of principal $\ZZ$-invariant spaces satisfying some additional properties. In \cite{Bow2000} the theorem was proved in the Euclidean case but the proof can be extended in a straightforward manner to our setting. 
	%This result is well known and follows from \cite[Theorem 2.6]{BR2015}. We will include here a different proof
	%for completeness.
	
	\begin{theo}[{\cite[Theorem 3.3]{Bow2000}}]\label{thm:principal-decomposition}
		Let $V$ be a $\ZZ$-invariant space of $L^2(\RR)$. Then $V$ can be decomposed as an orthogonal sum
		\begin{equation}\label{principal-decomposition}
		V=\underset{i\in\N}{\bigpoplus} S(\varphi_i),
		\end{equation}
		where $\varphi_i$ is a Parseval frame generator of $S(\varphi_i)$, and $\Sigma(S(\varphi_{i+1}))\subset \Sigma(S(\varphi_i))$ for all $i\in\N$.
	\end{theo}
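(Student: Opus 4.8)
The plan is to move the problem into the fiber space $L^2(\Omega,\ell_2(\ZZdual))$ by means of the fiberization map $\T$ of Definition \ref{def:mappingT}, to construct there a measurable field of orthonormal bases for the range function $\J=\J_V$ associated to $V$ by Theorem \ref{Th:Helson}, and finally to transport the resulting orthogonal decomposition back to $V$ through $\T^{-1}$.

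First I would record that the sets $\Omega_n=\{\omega\in\Omega:\dim\J(\omega)\ge n\}$, $n\in\N$, are measurable — since $\dim\J(\omega)=\sum_{k}\langle\pj_{\J(\omega)}e_k,e_k\rangle$ for any fixed orthonormal basis $\{e_k\}$ of $\ell_2(\ZZdual)$, which is a measurable function of $\omega$ by measurability of $\J$ — and nested, $\Omega_1\supset\Omega_2\supset\cdots$, with $\Omega_1=\Sigma(V)$. The core of the argument is then to produce measurable maps $e_n\colon\Omega\to\ell_2(\ZZdual)$, $n\in\N$, such that $e_n$ vanishes off $\Omega_n$ and, for a.e.\ $\omega$, the family $\{e_n(\omega):1\le n\le\dim\J(\omega)\}$ is an orthonormal basis of $\J(\omega)$. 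This is obtained by applying a measurable selection theorem — this is precisely where Castaing's selection theorem from Subsection \ref{MSVM} enters — to extract from $\omega\mapsto\J(\omega)$ a countable family of measurable sections whose values are dense in $\J(\omega)$ for a.e.\ $\omega$, and then running a fiberwise Gram--Schmidt procedure on that family, which preserves measurability. I expect this measurable construction to be the only genuine obstacle; it is exactly the step where the finitely generated arguments of \cite{ACCP,Bow2000} no longer suffice.

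Since $\ZZ$ is a uniform lattice, $\wh{\RR}/\ZZdual$ is compact and hence $|\Omega|<\infty$; as $\|e_n(\omega)\|\le 1$, each $e_n$ lies in $L^2(\Omega,\ell_2(\ZZdual))$, so I may set $\varphi_n=\T^{-1}[e_n]\in L^2(\RR)$. By the second part of Theorem \ref{Th:Helson}, the range function of $S(\varphi_n)$ is $\J_n(\omega)=\ol{\vsp}\{e_n(\omega)\}$, which has dimension $1$ on $\Omega_n$ and $0$ off it; thus $\Sigma(S(\varphi_n))=\Omega_n$ (up to null sets), and $\Omega_{n+1}\subset\Omega_n$ gives $\Sigma(S(\varphi_{n+1}))\subset\Sigma(S(\varphi_n))$. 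Moreover $\{e_n(\omega)\}$ is a Parseval frame of $\J_n(\omega)$ for a.e.\ $\omega$ — a unit vector is a Parseval frame of its span, and the zero system is a Parseval frame of $\{0\}$ — so Theorem \ref{thm:frame-fiber}, applied with bounds $A=B=1$, shows that $\{\tr{k}\varphi_n:k\in\ZZ\}$ is a Parseval frame of $S(\varphi_n)$, i.e.\ $\varphi_n$ is a Parseval frame generator.

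Finally I would check that the $S(\varphi_n)$ decompose $V$ orthogonally. For a.e.\ $\omega$ the orthonormal basis property gives $\J(\omega)=\bigoplus_n\J_n(\omega)$ inside $\ell_2(\ZZdual)$; orthogonality of the $\mathcal M_{\J_n}$ in $L^2(\Omega,\ell_2(\ZZdual))$ follows by integrating fiberwise inner products, and completeness follows by expanding each $F\in\mathcal M_{\J}$ fiberwise in the basis $\{e_n(\omega)\}$ and using dominated convergence to see that the partial sums — which lie in $\bigoplus_n\mathcal M_{\J_n}$ because $\|\langle F(\cdot),e_n(\cdot)\rangle e_n(\cdot)\|_{L^2}\le\|F\|_{L^2}$ — converge to $F$. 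Hence $\mathcal M_{\J}=\bigpoplus_n\mathcal M_{\J_n}$, and applying the isometric isomorphism $\T^{-1}$ together with Theorem \ref{Th:Helson}, which identifies $\T^{-1}[\mathcal M_{\J_n}]$ with $S(\varphi_n)$, yields $V=\bigpoplus_{i\in\N}S(\varphi_i)$, as desired.
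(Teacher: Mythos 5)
Your argument is correct, but it is worth noting that the paper itself gives no proof of this statement: it simply cites Bownik's Theorem 3.3 and remarks that his Euclidean proof extends straightforwardly to the group setting. What you have written is a self-contained proof of that extension, and it follows the expected route (build a measurable field of orthonormal bases subordinate to the dimension sets $\Omega_n=\{\w:\dim\J(\w)\ge n\}$, pull it back through $\T$, and read off the Parseval and spectrum properties from Theorem \ref{Th:Helson} and Theorem \ref{thm:frame-fiber}). Two small comparative remarks. First, invoking Castaing's theorem is a valid but slightly heavier tool than needed here: since $\J$ is a measurable range function, the countable family $\w\mapsto \pj_{\J(\w)}e_k$, with $\{e_k\}$ a fixed orthonormal basis of $\ell_2(\ZZdual)$, is already measurable and has dense span in $\J(\w)$ for a.e.\ $\w$, which is the classical Helson--Bownik starting point; Castaing becomes genuinely necessary later in the paper (Theorem \ref{thm:diagonalizable-fibers-s-diag}), not for this decomposition. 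Second, the one step you compress -- the fiberwise Gram--Schmidt ``with discarding'' -- does require a word on measurability: the index of the $n$-th retained vector depends on $\w$, so one should define it recursively and note that, on each measurable set where the retained index pattern up to step $n$ is a prescribed finite sequence, $e_n(\w)$ is given by an explicit Gram--Schmidt formula in measurable data; this is standard (it is how measurable fields of orthonormal bases are produced in direct integral theory) but it is exactly the bookkeeping that makes ``preserves measurability'' true. With that caveat, the remaining steps (finiteness of $|\Omega|$, $\Sigma(S(\varphi_n))=\Omega_n$ up to null sets, the Parseval claim being vacuous on the zero fibers, and the fiberwise expansion giving $\T[V]=\bigpoplus_n\T[S(\varphi_n)]$) are all sound.
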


	As a consequence, one obtains the following lemma which for the case of  $\dim \J(\w)<\infty$ for a.e. $\w\in\Omega$ has been proved in \cite{ACCP}, and extends with little effort to the general case. 
		
	\begin{lem}\label{lem:An}
		Let $V$ be a $\ZZ$-invariant space with range function $\J$. Then, there exist functions $\{\varphi_i\}_{i\in\N}$ of $L^2(\RR)$ and a family of disjoint measurable sets $\{A_n\}_{n\in\N_0}$ and $A_{\infty}$, such that $\Omega=\left( \bigcup_{n\in\N_0} A_n\right) \cup A_{\infty}$ and the following statements hold:
		\begin{enumerate}
			\item $\{T_k\varphi_i\,:\,i\in\N,\, k\in\ZZ\}$ is a Parseval frame of $V$,
			\item for every $n\in \N$ and for every $i>n$, $\T\varphi_i(\w)=0$ a.e. $\w\in A_n$,
			\item  for every $n\in \N$, $\{\T\varphi_1(\w),\dots,\T\varphi_n(\w)\}$ is an orthonormal basis of $\J(\w)$ for a.e. $\w\in A_n$,
			and $\{\T\varphi_i(\w)\}_{i\in\N}$ is an orthonormal basis of $\J(\w)$ for a.e. $\w\in A_\infty$,
			\item for every $n\in\N_0$, $\dim \J(\w) = n$ for a.e. $\w\in A_n$ and $\dim \J(\w)=\infty$ for a.e. $\w\in A_{\infty}$.
		\end{enumerate}
	\end{lem}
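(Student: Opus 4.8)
The plan is to build the functions $\{\varphi_i\}_{i\in\N}$ directly from the principal decomposition of Theorem \ref{thm:principal-decomposition} and then use the nesting of spectra to manufacture the measurable partition of $\Omega$. So first I would apply Theorem \ref{thm:principal-decomposition} to $V$ to obtain $V=\bigpoplus_{i\in\N} S(\varphi_i)$ with each $\varphi_i$ a Parseval frame generator of $S(\varphi_i)$ and $\Sigma(S(\varphi_{i+1}))\subset\Sigma(S(\varphi_i))$. Since the sum is orthogonal, the union of the generating systems $\{T_k\varphi_i : i\in\N, k\in\ZZ\}$ is a Parseval frame of $V$, giving (1) immediately. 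By Theorem \ref{thm:frame-fiber} applied to each $S(\varphi_i)$ (with $A=B=1$), for a.e.\ $\omega$ the singleton $\{\T\varphi_i(\omega)\}$ is a Parseval frame of $\J_{S(\varphi_i)}(\omega)$, which forces $\T\varphi_i(\omega)$ to be either a unit vector or $0$, and $\J_{S(\varphi_i)}(\omega)=\vsp\{\T\varphi_i(\omega)\}$. Because the decomposition of $V$ is orthogonal, Proposition \ref{prop:perp} and \eqref{eq:proj} give that the fibers $\J_{S(\varphi_i)}(\omega)$ are mutually orthogonal for a.e.\ $\omega$ and $\J(\omega)=\bigoplus_i \J_{S(\varphi_i)}(\omega)$; hence $\{\T\varphi_i(\omega) : \T\varphi_i(\omega)\neq 0\}$ is an orthonormal basis of $\J(\omega)$ for a.e.\ $\omega$.

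Next I would define the sets. Put $B_i = \Sigma(S(\varphi_i)) = \{\omega : \T\varphi_i(\omega)\neq 0\}$ (up to null sets); these are measurable and nested decreasingly. Then set $A_n = B_n\setminus B_{n+1}$ for $n\in\N$, $A_0 = \Omega\setminus B_1$, and $A_\infty = \bigcap_{i\in\N} B_i$. These are clearly disjoint, measurable, and exhaust $\Omega$. On $A_n$ ($n\ge 1$) we have $\omega\in B_n$ but $\omega\notin B_{n+1}$, so by the nesting $\omega\notin B_i$ for all $i>n$, giving (2); and $\omega\in B_i$ for $i\le n$, so exactly $\T\varphi_1(\omega),\dots,\T\varphi_n(\omega)$ are the nonzero fibers, which by the previous paragraph form an orthonormal basis of $\J(\omega)$, giving (3) on $A_n$ and $\dim\J(\omega)=n$, i.e.\ (4). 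On $A_0$ all fibers vanish so $\J(\omega)=\{0\}$, consistent with (4) for $n=0$. On $A_\infty$ all $\T\varphi_i(\omega)$ are nonzero unit vectors, orthonormal, spanning $\J(\omega)$, so $\dim\J(\omega)=\infty$ and (3), (4) hold there.

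The one genuinely delicate point — and the reason this is stated as a lemma rather than left to the reader — is the measurability and the passage from "Parseval frame on each fiber" to the pointwise-a.e.\ statements, together with the compatibility between the range function $\J$ of $V$ and the range functions of the summands $S(\varphi_i)$. Concretely, I need that $\Sigma(S(\varphi_i))$ is measurable (it is, since $\omega\mapsto\|\T\varphi_i(\omega)\|^2$ is measurable, being the $\omega$-section of an $L^2$ function after choosing a measurable representative), and I need the a.e.\ identity $\J_V(\omega) = \ol{\vsp}\{\T\varphi_i(\omega):i\in\N\}$, which is exactly the "moreover" part of Theorem \ref{Th:Helson} applied to the countable generating set $\{\varphi_i\}$. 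The orthogonality of the fibers across different $i$ is where Theorem \ref{thm:principal-decomposition}'s orthogonality of the sum gets used: $S(\varphi_i)\perp S(\varphi_j)$ implies, via \eqref{eq:proj} and the fact that $P_{\J_{S(\varphi_i)}(\omega)}$ and $P_{\J_{S(\varphi_j)}(\omega)}$ are the fiberwise projections, that $\J_{S(\varphi_i)}(\omega)\perp\J_{S(\varphi_j)}(\omega)$ a.e. All the remaining verifications are routine set-theoretic bookkeeping with the nested sets $B_i$, so the proof reduces to assembling Theorems \ref{Th:Helson}, \ref{thm:frame-fiber}, and \ref{thm:principal-decomposition} carefully on a common conull set.
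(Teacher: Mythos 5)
Your proof is correct and follows exactly the route the paper intends: the lemma is stated there without proof as a direct consequence of Theorem \ref{thm:principal-decomposition} (citing \cite{ACCP} for the finite-dimensional case), and your argument --- setting $B_i=\Sigma(S(\varphi_i))$, using the nested spectra to define $A_n=B_n\setminus B_{n+1}$, $A_0=\Omega\setminus B_1$ and $A_\infty=\bigcap_{i}B_i$, and combining Theorem \ref{Th:Helson}, Theorem \ref{thm:frame-fiber} and the fiberwise orthogonality of the summands on a common conull set --- is precisely that derivation. No gaps.
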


%	\begin{proof}
%		Let $\{\varphi_i\}_{i\in\N}\subset L^2(\RR)$ be the functions from the decomposition of $V$ as in Theorem \ref{thm:principal-decomposition}. As $\varphi_i$ is a Parseval frame generator of $S(\varphi_i)$ for every $i\in\N$, then $\{T_k\varphi_i\,:\,i\in\N,\, k\in\ZZ\}$ is a Parseval frame of $V$, hence (1) holds. 
%		
%		Next, define the sets  $\{A_n\}_{n\in\N_0}$ and $A_{\infty}$, as follows: $A_0 = \Omega \setminus \Sigma(V)$, $A_n=\Sigma(S(\varphi_n))\setminus \Sigma(S(\varphi_{n+1}))$ for $n\in \N$, and $A_\infty = \bigcap_{i\in\N} \Sigma(S(\varphi_i))$. Since $\Sigma(S(\varphi_{i+1}))\subset \Sigma(S(\varphi_i))$ for all $i\in\N$, the sets $\{A_n\}_{n\in\N_0}$ and $A_\infty$ are pairwise disjoint and we have that $\Omega=\left( \bigcup_{n\in\N_0} A_n\right) \cup A_{\infty}$. The rest of the properties are easy to check.
%	\end{proof}

	\subsection{$\ZZ$-preserving operators}\label{sec:SP} The natural operators acting on $\ZZ$-invariant spaces are the $\ZZ$-preserving operators. These are the ones that commute with translations by elements of $\ZZ$, also known in the literature as shift-preserving operators.
	
	\begin{defi}
		Let $V, V'\subset L^2(\RR)$ be two $\ZZ$-invariant spaces and let $L:V\rightarrow V'$ be a bounded operator. We say that $L$ is $\ZZ$-preserving if $LT_k = T_k L$ for all $k\in\ZZ$.
	\end{defi}

	The structure of $\ZZ$-preserving operators can be understood through the concept of its range operator, which was first introduced in the Euclidean context in \cite{Bow2000}.
	
	\begin{defi}
		%Let $\Omega \subset \wh{\RR}$ be a Borel section of $\wh{\RR} / \ZZdual \approx \wh{\ZZ}$. 
		Given measurable range functions
		\begin{equation*}
		\J,\J':\Omega\rightarrow \{\text{closed subspaces of }\ell_2(\ZZdual)\},
		\end{equation*}
		a range operator $O:\J\rightarrow \J'$ is a choice of linear operators $O(\w):\J(\w)\rightarrow \J'(\w)$, $\w\in\Omega$.

		A range operator $O$ is said to be bounded if $\esssup_{\w\in\Omega} \|O(\w)\|_{\text{op}} < \infty$, and
		is measurable if $\w\mapsto\langle O(\w)P_{\J(\w)}a,b\rangle_{\ell_2(\ZZdual)}$ is measurable for all $a,b\in\ell_2(\ZZdual)$.
	\end{defi}
	There exists a correspondence between bounded $\ZZ$-preserving operators and bounded measurable range operators. In what follows we describe how this correspondence can be  deduced.  
	
	\begin{defi}\label{def:M_psi}
		For $\psi\in L^\infty(\Omega)$, denote as
		$M_{\psi}:L^2\left(\Omega,\ell_2(\ZZdual)\right)\rightarrow  L^2\left(\Omega,\ell_2(\ZZdual)\right)$ the multiplication operator 
		\begin{equation*}
		M_\psi F (\w) = \psi(\w)F(\w),\quad F \in L^2\left(\Omega,\ell_2(\ZZdual)\right),\,\w\in\Omega,
		\end{equation*}
		which is well defined and bounded.
	\end{defi}
	
	Let $D=\{e^{-2\pi i \w . k}\}_{k\in\ZZ}\subseteq L^\infty(\Omega)$, then $D$ is a determining set for $L^1(\Omega)$ (see \cite[Definition 3.3]{BI2019}). If $L:V\rightarrow V'$ is a bounded $\ZZ$-preserving operator, then the operator 
	\begin{equation}\label{eq:Ltilde}
	\tilde{L}=\T L \T^{-1} : \mathcal M_{\J_V} \rightarrow \mathcal M_{\J_{V'}}
	\end{equation} is bounded and, by \eqref{eq:Tau-intertwining}, satisfies that $\tilde{L} M_{\psi} = M_{\psi} \tilde{L}$ for every $\psi \in D$. By \cite[Theorem 3.7]{BI2019}, there exists a bounded measurable range operator $O:\J_V\rightarrow \J_{V'}$ such that $\tilde{L}F(\w) = O(\w)F(\w)$, for every $F \in \mathcal M_{\J_V}$, $\w\in \Omega$. That is, 
	\begin{equation}
	\T [Lf](\w) = O(\w) \T[f] (\w), \quad f\in V,\,\w\in\Omega.
	\end{equation}
	Moreover, the correspondence between $L$ and $O$ is one-to-one if we identify range operators that agree a.e. $\w\in \Omega$.
	
	A different way to understand range operators is through the direct integral theory. In fact, it can be proved (see \cite{BI2019}) that $$\mathcal M_{\J_V} = \int_{\Omega}^\oplus \J_V(\w)\,d\w$$ and that the operator $\tilde{L}$ defined in \eqref{eq:Ltilde} is a decomposable operator such that $$\tilde{L}=\int_{\Omega}^{\oplus} O(\w)\,d\w.$$
	
	In the following theorem we enumerate some results that relate the properties of $L$ with the pointwise properties of its range operator $O$ (see \cite{BI2019} for proofs).
	
	\begin{theo}[\cite{BI2019}]\label{thm:pointwise}
		Let $V,V'\subset L^2(\RR)$ be two $\ZZ$-invariant spaces. Let $L:V\rightarrow V'$ be a $\ZZ$-preserving operator with corresponding range operator $O:\J_V\rightarrow \J_{V'}$. Then the following are true:
		\begin{enumerate}
			\item\label{op-norm} $\|L\|_{\text{op}}=\esssup_{\w\in\Omega} \|O(\w)\|_{\text{op}}$.
			\item The adjoint $L^*:V'\rightarrow V$ is also $\ZZ$-preserving with corresponding range operator $O^*:\J_{V'}\rightarrow \J_V$ given by $O^*(\w)=(O(\w))^*$ for a.e. $\w\in\Omega$.
			\item $L$ is normal (self-adjoint) if and only if $O(\w)$ is normal (self-adjoint) for a.e. $\w\in\Omega$.
			\item $L$ is injective if and only if $O(\w)$ is injective for a.e. $\w\in\Omega$.
			%\item $L$ is invertible if and only if $O(\w)$ is invertible for a.e. $\w\in\Omega$ and $$\esssup_{\w\in\Omega}\|O(\w)^{-1}\|_{\text{op}}<\infty.$$ In this case, $L^{-1}$ is also $\ZZ$-preserving with range operator $O^{-1}$ which satisfies that $O^{-1}(\w)=(O(\w))^{-1}$ for a.e. $\w\in\Omega$.
			\item\label{partial-isom} $L$ is a (partial) isometry if and only if $O(\w)$ is a (partial) isometry for a.e. $\w\in\Omega$.
			\item\label{rank} The space $V''=\overline{L(V)}\subseteq L^2(\RR)$ is $\ZZ$-invariant and its range function is given by $$\J_{V''}(\w)=\overline{O(\w) \J_V(\w)},$$
			for a.e. $\w\in\Omega$.
			\item\label{ker} The space $\ker(L)$ is $\ZZ$-invariant and its range function is given by $K(\w)=\ker(O(\w))$ for a.e. $\w\in\Omega$.
		\end{enumerate}
	\end{theo}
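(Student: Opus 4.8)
The plan is to carry every assertion through the fiberization map $\T$ and read it off from the decomposable operator $\tilde{L} = \T L \T^{-1} = \int_{\Omega}^{\oplus} O(\w)\,d\w$ constructed above, using two elementary properties of this correspondence together with the a.e.\ uniqueness of the range operator: since $\T$ is unitary, $\widetilde{L^*} = (\tilde{L})^*$, and for composable $\ZZ$-preserving operators one has $\widetilde{L_1 L_2} = \tilde{L}_1 \tilde{L}_2$, so that the range operator of $L_1 L_2$ is $\w \mapsto O_1(\w) O_2(\w)$ a.e. Both are immediate from the pointwise identity $\tilde{L} F(\w) = O(\w) F(\w)$, $\w \in \Omega$.

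With these in hand the first, second, third and fifth items become formal. For (2): taking adjoints in $L T_k = T_k L$ gives $T_{-k} L^* = L^* T_{-k}$ for every $k \in \ZZ$, so $L^*$ is $\ZZ$-preserving, and $\widetilde{L^*} = (\tilde{L})^* = \int_{\Omega}^{\oplus} O(\w)^*\,d\w$, each $O(\w)^*$ being the Hilbert-space adjoint $\J_{V'}(\w) \to \J_V(\w)$; uniqueness identifies the range operator of $L^*$ with $\w \mapsto O(\w)^*$. For (1): the bound $\|\tilde{L} F\|^2 = \int_\Omega \|O(\w) F(\w)\|^2\, d\w \le \big(\esssup_\w \|O(\w)\|\big)^2 \|F\|^2$ gives $\|L\| = \|\tilde{L}\| \le \esssup_\w \|O(\w)\|$, while if $\|O(\w)\| > M$ on a set $E$ of positive measure I would use a measurable selection to pick unit vectors $v(\w) \in \J_V(\w)$ with $\|O(\w) v(\w)\| > M$ on $E$ and test $\tilde{L}$ against $v\,\one{E}$ for the reverse inequality. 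For (3): $L$ is self-adjoint iff $\tilde{L} = (\tilde{L})^*$ iff $O(\w) = O(\w)^*$ a.e., and $L$ is normal iff $\widetilde{L^* L} = \widetilde{L L^*}$, i.e.\ $O(\w)^* O(\w) = O(\w) O(\w)^*$ a.e.\ by the product rule. For (5): $L$ is an isometry iff $L^* L$ is the identity of $V$ iff $O(\w)^* O(\w)$ is the identity of $\J_V(\w)$ a.e.; for partial isometries I would invoke the characterization that $L$ is a partial isometry iff $L^* L$ is an orthogonal projection, which passes to the fibers since an operator is a projection iff it equals both its adjoint and its square.

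The remaining items (4), (6), (7) are geometric, and it is here that set-valued measurability enters. For (6): $V'' = \overline{L(V)}$ is $\ZZ$-invariant because $T_k L(V) = L(T_k V) \subseteq L(V)$ and closure preserves $\ZZ$-invariance; to compute its range function I would fix the measurable orthonormal sequence $\{e_i(\w)\}$ with $\ol{\vsp}\{e_i(\w)\}=\J_V(\w)$ supplied by Lemma \ref{lem:An} (namely $e_i(\w)=\T\varphi_i(\w)$), note that each $\w \mapsto O(\w) e_i(\w)$ is measurable so that $\J''(\w) := \ol{\vsp}\{O(\w) e_i(\w) : i \in \N\} = \ol{O(\w)\J_V(\w)}$ defines a measurable range function, and then check $\T[V''] = \mathcal M_{\J''}$: the inclusion $\subseteq$ follows from $\tilde{L} F(\w) = O(\w)F(\w) \in O(\w)\J_V(\w)$, and $\supseteq$ from the fact that $\psi\, O(\cdot) e_i \in \tilde{L}(\mathcal M_{\J_V})$ for every $\psi \in L^\infty(\Omega)$, which forces $O(\w) e_i(\w)$ into the fiber of $\T[V'']$ a.e.; Theorem \ref{Th:Helson} then gives $\J_{V''} = \J''$ a.e. For (7): $f \in \ker L$ iff $O(\w)\T[f](\w) = 0$ a.e.\ iff $\T[f](\w) \in \ker O(\w)$ a.e., so $\T[\ker L] = \mathcal M_{K}$ with $K(\w) = \ker O(\w)$; that $K$ is a measurable range function follows from (6) applied to $L^*$ and Proposition \ref{prop:perp}, since $\ker O(\w) = \J_V(\w) \ominus \ol{O(\w)^* \J_{V'}(\w)}$, and then $\J_{\ker L} = K$ a.e.\ by Theorem \ref{Th:Helson} and $\ker L$ is $\ZZ$-invariant. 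For (4): if $O(\w)$ is injective a.e., then $Lf = 0$ forces $O(\w)\T[f](\w) = 0$ a.e., hence $\T[f] = 0$ and $f = 0$; conversely, if $O(\w)$ is non-injective on a set $E$ of positive measure, I would select a measurable unit field $v(\w) \in \ker O(\w) \cap \J_V(\w)$ over $E$ (legitimate since $\w \mapsto \ker O(\w)$ is measurable by (7)) and take $f = \T^{-1}(v\,\one{E}) \in \ker L \setminus \{0\}$.

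The main obstacle is exactly the measurability demanded in the geometric items and in the norm computation: that $\w \mapsto \|O(\w)\|$ is measurable, that near-maximizing unit vectors (for (1)) and kernel unit vectors (for (4)) can be chosen measurably, and that $\w \mapsto \ol{O(\w)\J_V(\w)}$ and $\w \mapsto \ker O(\w)$ are measurable range functions. These are precisely the points where the direct-integral framework and measurable-selection theory — Castaing's theorem together with the measurable orthonormal bases of Lemma \ref{lem:An} — do the work; once measurability is secured, each identity above is a routine consequence of the pointwise formula for $\tilde{L}$ and the a.e.\ uniqueness of the range operator, and the full details are carried out in \cite{BI2019}.
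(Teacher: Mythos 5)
The paper itself gives no proof of Theorem \ref{thm:pointwise}: it is quoted from \cite{BI2019}, whose arguments run through exactly the fiberization/direct-integral correspondence $\tilde L=\int_{\Omega}^{\oplus}O(\w)\,d\w$ that you use. Your sketch is a correct reconstruction along those same lines (adjoint and composition rules plus a.e.\ uniqueness of the range operator for the algebraic items, Theorem \ref{Th:Helson}, Lemma \ref{lem:An} and measurable selections for the norm, range and kernel items), and the measurability points you defer are precisely the ones settled in \cite{BI2019}.
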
 
	
\subsection{Measurable set-valued maps}\label{MSVM}

Given $L:V\rightarrow V$ a bounded $\ZZ$-preserving operator, there is a relation between the spectrum of $L$ and the pointwise spectrum of its range operator, as we will discuss in the next subsection. For that, we need to introduce the definition of measurable set-valued maps. We refer the reader to \cite{AF2009} for a detailed exposition on the set-valued maps' theory. 

\begin{defi}
	Let $(X,\mathcal M)$ be a measurable space and $Y$ a topological space. A set-valued map  from $X$ to $Y$ is a map $F:X\rightsquigarrow Y$ whose values are sets in $Y$. That is, $F(x)\subseteq Y$ for every $x\in X$. If $F(x)$ is closed (compact) for every $x\in X$, then $F$ is said to be a {\em set-valued map to closed (compact) values}.
	
	A set-valued map is said to be measurable if for every open set $O\subset Y$, the set
	$$F^{-1}(O) := \{x\in X\,:\, F(x)\cap O \neq \emptyset\}\in\mathcal M.$$
\end{defi}

For example, in \cite{BI2019} it was proved that a measurable range function $\J$ is a measurable set-valued map $\Omega\rightsquigarrow \ell_2(\ZZdual)$ to non-empty closed values.

One very important result that we will need later is the existence of a dense set of measurable selections for measurable set-valued maps, which is known as Castaign's Selection Theorem (see \cite{AF2009} for a proof). 

\begin{defi}
	Let $(X,\mathcal M)$ be a measurable space and $Y$ a topological space. Given $F:X\rightsquigarrow Y$ a measurable set-valued map, we say that a measurable function $f:X\to Y$ is a measurable selection of $F$ if  $f(x)\in F(x)$ for every $x\in X$.
\end{defi}

\begin{theo}[Castaign's Selection Theorem] \label{thm:castaign}
	Let $(X,\mathcal M)$ be a measurable space, $Y$ a complete separable metric space and $F:X\rightsquigarrow Y$ a measurable set-valued map to non-empty closed values, then there exists a sequence of measurable selections $f_j:X\to Y$, $j\in\N$ such that  for every $x\in X$.
	\begin{equation}
	F(x)=\overline{\{f_j(x)\,:\,j\in\N\}}.
	\end{equation}
	
\end{theo}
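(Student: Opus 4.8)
The plan is to first prove the existence of a \emph{single} measurable selection --- the Kuratowski--Ryll-Nardzewski theorem --- and then bootstrap it to obtain a fiberwise dense sequence. For the single-selection step I would fix a countable dense set $\{y_n\}_{n\in\N}$ of $Y$ (separability) and construct, by induction on $k$, measurable functions $g_k:X\to Y$ taking only countably many values --- hence measurable, since each level set $\{g_k=y_n\}$ is built from sets of the form $F^{-1}(B(y_n,r))$ and $\{x:d(y_n,g_{k-1}(x))<r\}$ by countable Boolean operations --- such that $d(g_k(x),F(x))<2^{1-k}$ and $d(g_{k+1}(x),g_k(x))<3\cdot 2^{-k}$ for every $x$. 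At each stage one covers $X$ by the measurable sets on which some $y_n$ is simultaneously within $2^{-k}$ of $F(x)$ and within $2^{-k}+2^{1-k}$ of $g_k(x)$, disjointifies this cover, and lets $g_{k+1}$ equal the corresponding $y_n$ on each piece; the cover is exhaustive because $F(x)\ne\emptyset$ and $\{y_n\}$ is dense. Since the increments are summable, $(g_k(x))_k$ is Cauchy uniformly in $x$, so by completeness of $Y$ the limit $g(x):=\lim_k g_k(x)$ exists, defines a measurable function, satisfies $d(g(x),F(x))=0$, and therefore lies in $F(x)$ because $F(x)$ is closed.

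For the full statement I would use the same dense set and, for each pair $(n,m)\in\N^2$, consider the measurable set $X_{n,m}:=F^{-1}(B(y_n,1/m))\in\mathcal M$ together with the trimmed set-valued map $G_{n,m}:X_{n,m}\rightsquigarrow Y$ given by $G_{n,m}(x):=\overline{F(x)\cap B(y_n,1/m)}$, which has non-empty closed values contained in $F(x)$. One checks that $G_{n,m}$ is measurable on $(X_{n,m},\mathcal M\cap X_{n,m})$: for open $O\subseteq Y$ one has $G_{n,m}(x)\cap O\ne\emptyset\iff F(x)\cap(B(y_n,1/m)\cap O)\ne\emptyset$, using that the closure of a set meets an open set precisely when the set itself does, and since $B(y_n,1/m)\cap O$ is open this gives $G_{n,m}^{-1}(O)=X_{n,m}\cap F^{-1}(B(y_n,1/m)\cap O)\in\mathcal M$. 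Applying the single-selection step to $G_{n,m}$ produces a measurable $h_{n,m}:X_{n,m}\to Y$ with $h_{n,m}(x)\in F(x)$ and $d(h_{n,m}(x),y_n)\le 1/m$; extending $h_{n,m}$ to all of $X$ by a fixed global measurable selection $g$ of $F$ on $X\setminus X_{n,m}$ yields a measurable selection $f_{n,m}:X\to Y$ of $F$.

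Finally, enumerating $\{g\}\cup\{f_{n,m}:n,m\in\N\}$ as $\{f_j\}_{j\in\N}$ gives the conclusion: each $f_j$ is a selection, so $\overline{\{f_j(x):j\in\N\}}\subseteq F(x)$ for every $x$; conversely, given $x\in X$, $z\in F(x)$ and $\varepsilon>0$, pick $m$ with $2/m<\varepsilon$ and then $n$ with $d(y_n,z)<1/m$, so $z\in F(x)\cap B(y_n,1/m)$, hence $x\in X_{n,m}$ and $d(f_{n,m}(x),z)\le d(f_{n,m}(x),y_n)+d(y_n,z)<2/m<\varepsilon$. The only genuinely delicate point is the measurability bookkeeping --- that the countably-valued approximants $g_k$ are measurable and that the trimmed maps $G_{n,m}$ are again measurable set-valued maps to non-empty closed values --- and both reduce to the elementary equivalence $\overline A\cap O\ne\emptyset\iff A\cap O\ne\emptyset$ for $O$ open, together with closure of $\mathcal M$ under countable operations; everything else is routine.
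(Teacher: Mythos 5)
Your argument is correct: it is the standard proof of Castaing's theorem, first producing a single measurable selection via the Kuratowski--Ryll-Nardzewski approximation scheme (countably valued measurable approximants built from the sets $F^{-1}(B(y_n,r))$, with summable increments and completeness of $Y$), and then applying that step to the trimmed maps $x\rightsquigarrow \overline{F(x)\cap B(y_n,1/m)}$ on the measurable sets $F^{-1}(B(y_n,1/m))$ to get a fiberwise dense countable family of selections. The paper does not prove this theorem itself but quotes it from the literature (Aubin--Frankowska), and your proof is essentially the argument given there, with the delicate points --- measurability of the countably valued approximants and of the trimmed maps, both resting on the equivalence $\overline{A}\cap O\neq\emptyset\iff A\cap O\neq\emptyset$ for open $O$ --- handled correctly.
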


\subsection{The spectrum of $\ZZ$-preserving operators}\label{spectrum}
	
We start this subsection with Theorem \ref{thm:measurable-spectrum} and Theorem \ref{thm:spectral-measure} whose proofs appeared in \cite{BI2019} and also in \cite{Chow1970, Len1974} in the context of decomposable operators on direct integral Hilbert spaces.

	The first theorem establishes  that the spectra of the fibers of a $\ZZ$-preserving operator $L$ define a measurable set-valued map and describe the relationship between those spectra and the spectrum of $L$. 
	
	\begin{theo}[\cite{BI2019}]\label{thm:measurable-spectrum}
		Let $L:V\rightarrow V$ be a $\ZZ$-preserving operator with range operator $O:\J\rightarrow \J$. Then  $F:\Omega\rightsquigarrow \C$ defined by $F(\w)=\sigma(O(\w))$, $\w\in \Omega$ is a measurable set-valued map to non-empty compact values and $F(\w) \subseteq \sigma(L)$ for a.e. $\w\in\Omega$.
		
		Moreover, when $L$ is normal, $\sigma(L)$ coincides with the smallest closed subset of $\C$ that contains $F(\w)$ for a.e. $\w\in\Omega$. 
	\end{theo}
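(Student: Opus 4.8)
The plan is to verify the assertions of the theorem in turn, the measurability of $F$ being the crux. By Theorem~\ref{thm:pointwise}\,\eqref{op-norm} we have $\|O(\w)\|_{\mathrm{op}}\le\|L\|_{\mathrm{op}}$ for a.e.\ $\w$, so each $\sigma(O(\w))$ is a compact subset of $\{z\in\C:|z|\le\|L\|_{\mathrm{op}}\}$, non-empty precisely on the measurable set $\Sigma(V)=\{\w:\J(\w)\neq\{0\}\}$, to which we restrict (elsewhere the statement is vacuous). To prove that $F$ is measurable I would use the standard characterization (valid for maps to non-empty closed values in a complete separable metric space, see \cite{AF2009}): $F$ is measurable iff $\w\mapsto\operatorname{dist}(\lambda,\sigma(O(\w)))$ is measurable for every $\lambda\in\C$. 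The key identity is
\[
\operatorname{dist}\!\big(\lambda,\sigma(O(\w))\big)=\min\Big(\inf_{v\in\J(\w),\,\|v\|=1}\|(O(\w)-\lambda)v\|,\ \inf_{v\in\J(\w),\,\|v\|=1}\|(O(\w)^{*}-\overline{\lambda})v\|\Big),
\]
which follows from the decomposition $\sigma(A)=\sigma_{\mathrm{ap}}(A)\cup\{\overline{\mu}:\mu\in\sigma_{\mathrm{ap}}(A^{*})\}$ of the spectrum of a bounded operator $A$, from $\operatorname{dist}(\lambda,\sigma_{\mathrm{ap}}(A))=\inf_{\|v\|=1}\|(A-\lambda)v\|$, and from the fact that the distance to a union of sets is the minimum of the distances.

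The next step is to express the right-hand side as a countable infimum of measurable functions of $\w$. By Castaign's Theorem~\ref{thm:castaign} applied to the measurable range function $\J$ (or directly by Lemma~\ref{lem:An}) there is a sequence of measurable selections $\w\mapsto f_{j}(\w)\in\J(\w)$ with $\{f_{j}(\w)\}_{j}$ dense in $\J(\w)$ for every $\w$; normalizing the non-zero ones gives a countable dense subset of the unit sphere of $\J(\w)$, so each infimum above may be taken over this family. Expanding $\|(O(\w)-\lambda)f_{j}(\w)\|^{2}$ and $\|(O(\w)^{*}-\overline{\lambda})f_{j}(\w)\|^{2}$ in a measurable Parseval frame of $\J(\w)$ (again from Lemma~\ref{lem:An}) and using that $O$ and, by Theorem~\ref{thm:pointwise}\,(2), also $O^{*}$ are measurable range operators, one checks that these are measurable in $\w$; hence $\w\mapsto\operatorname{dist}(\lambda,\sigma(O(\w)))$ is measurable and $F$ is a measurable set-valued map to non-empty compact values. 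When $L$ is normal, $O(\w)$ is normal a.e.\ by Theorem~\ref{thm:pointwise}\,(3), so $\sigma_{\mathrm{ap}}(O(\w))=\sigma(O(\w))$ and the formula reduces to $\operatorname{dist}(\lambda,\sigma(O(\w)))=\inf_{\|v\|=1}\|(O(\w)-\lambda)v\|$.

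For $F(\w)\subseteq\sigma(L)$ a.e., fix $\lambda\notin\sigma(L)$. Then $L-\lambda I$ and hence $(L-\lambda I)^{-1}$ are $\ZZ$-preserving; writing $R$ for the range operator of $(L-\lambda I)^{-1}$ and passing to range operators in $(L-\lambda I)^{-1}(L-\lambda I)=(L-\lambda I)(L-\lambda I)^{-1}=I_{V}$ --- using that composition of $\ZZ$-preserving operators corresponds to pointwise composition of range operators and that $I_{V}$ corresponds to $\w\mapsto I_{\J(\w)}$ --- gives $R(\w)(O(\w)-\lambda)=(O(\w)-\lambda)R(\w)=I_{\J(\w)}$ a.e. By Theorem~\ref{thm:pointwise}\,\eqref{op-norm}, $\|R(\w)\|=\|(O(\w)-\lambda)^{-1}\|\le\|(L-\lambda I)^{-1}\|_{\mathrm{op}}$ a.e., whence $\operatorname{dist}(\lambda,\sigma(O(\w)))\ge\|(O(\w)-\lambda)^{-1}\|^{-1}\ge\|(L-\lambda I)^{-1}\|_{\mathrm{op}}^{-1}>0$ a.e. Now choose $\{\lambda_{i}\}_{i}$ dense in the open set $\C\setminus\sigma(L)$ and remove the countable union of the corresponding null sets; outside a single null set $N$ the inequality $\operatorname{dist}(\lambda_{i},\sigma(O(\w)))\ge\|(L-\lambda_{i}I)^{-1}\|_{\mathrm{op}}^{-1}$ holds for all $i$, and since $\lambda\mapsto\operatorname{dist}(\lambda,\sigma(O(\w)))$ is $1$-Lipschitz while $\lambda\mapsto\|(L-\lambda I)^{-1}\|_{\mathrm{op}}^{-1}$ is continuous and positive on $\C\setminus\sigma(L)$, the inequality extends to every $\lambda\in\C\setminus\sigma(L)$. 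Hence $\sigma(O(\w))\cap(\C\setminus\sigma(L))=\emptyset$ for $\w\notin N$, i.e.\ $\sigma(O(\w))\subseteq\sigma(L)$ a.e.

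It remains, assuming $L$ normal, to show that any closed $C$ with $\sigma(O(\w))\subseteq C$ a.e.\ contains $\sigma(L)$. For $\lambda_{0}\notin C$ put $\delta=\operatorname{dist}(\lambda_{0},C)>0$, so $\operatorname{dist}(\lambda_{0},\sigma(O(\w)))\ge\delta$ a.e.; by normality $\sigma_{\mathrm{ap}}(O(\w))=\sigma(O(\w))$, so both $O(\w)-\lambda_{0}$ and $(O(\w)-\lambda_{0})^{*}=O(\w)^{*}-\overline{\lambda_{0}}$ are bounded below by $\delta$ a.e. Integrating these pointwise bounds shows $\tilde L-\lambda_{0}I$ and $(\tilde L-\lambda_{0}I)^{*}$ are bounded below by $\delta$ on $\mathcal M_{\J}$, so $\tilde L-\lambda_{0}I$ --- and therefore $L-\lambda_{0}I$ --- is invertible, i.e.\ $\lambda_{0}\notin\sigma(L)$. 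Together with the previous paragraph this identifies $\sigma(L)$ with the smallest closed set containing $F(\w)$ for a.e.\ $\w$. The genuine obstacle in this program is the measurability step: everything else reduces to resolvent transfer through the range-operator calculus, whereas passing measurability from $O$ to $\w\mapsto\sigma(O(\w))$ forces the reduction above to the lower-bound functions of $O(\w)$ and $O(\w)^{*}$ together with a dense sequence of measurable selections of $\J$.
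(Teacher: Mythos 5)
The paper itself gives no proof of this statement: it is quoted from \cite{BI2019} (and, in the language of decomposable operators on direct integrals, from \cite{Chow1970,Len1974}), so there is no internal argument to compare against; what you have written is a correct, self-contained reconstruction along the standard lines of that theory. The crux, measurability, is handled soundly: the identity $\mathrm{dist}(\lambda,\sigma(A))=\min\bigl(\inf_{\|v\|=1}\|(A-\lambda)v\|,\ \inf_{\|v\|=1}\|(A^*-\overline{\lambda})v\|\bigr)$ does hold for every bounded operator $A$ on a nonzero Hilbert space (via $\sigma(A)=\sigma_{ap}(A)\cup\{\overline{\mu}:\mu\in\sigma_{ap}(A^*)\}$ and the $1$-Lipschitz lower-bound function), and combining it with Castaign selections of $\J$, normalized where nonzero, and with the measurability of $O$ and $O^*$ (Theorem \ref{thm:pointwise}) yields measurability of $\w\mapsto\mathrm{dist}(\lambda,\sigma(O(\w)))$, which is equivalent to measurability of $F$ since $\wh{\RR}$-independent balls generate the topology of $\C$. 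The resolvent-transfer argument for $F(\w)\subseteq\sigma(L)$, including the countable dense family $\{\lambda_i\}$ and the Lipschitz/continuity upgrade to all $\lambda\notin\sigma(L)$, is correct, as is the converse inclusion for normal $L$ obtained by integrating the pointwise lower bounds for $O(\w)-\lambda_0$ and its adjoint over $\Omega$. Two points you should state explicitly rather than use silently: (i) that composition of $\ZZ$-preserving operators corresponds a.e. to pointwise composition of range operators, and $I_V$ to $I_{\J(\w)}$ — this is not recorded in the paper, but follows immediately from $\T[Lf](\w)=O(\w)\T[f](\w)$ together with the a.e. uniqueness of range operators applied to a countable generating family; (ii) on $\Omega\setminus\Sigma(V)$ one has $\J(\w)=\{0\}$ and $\sigma(O(\w))=\emptyset$, so the ``non-empty values'' clause genuinely requires the restriction (or convention) you flag — a caveat about the statement as quoted, not a defect of your argument.
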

	
	Suppose now that $L:V\rightarrow V$ is a normal, bounded and $\ZZ$-preserving operator. Then, there exists a spectral measure $E$ of $L$ and we have that $$L=\int_{\sigma(L)} \lambda\, dE(\lambda).$$ Since the range operator $O$ satisfies that $O(\w)$ is normal for a.e. $\w\in\Omega$, then there exists a spectral measure $E_\w$ of $O(\w)$ for a.e. $\w\in\Omega$ and $$O(\w)=\int_{\sigma(O(\w))} \lambda \,dE_\w(\lambda).$$ In this direction, the following result was obtained.
	
	\begin{theo}[\cite{BI2019}]\label{thm:spectral-measure}
		Let $L:V\rightarrow V$ a normal, bounded and $\ZZ$-preserving operator with range operator $O$. Let $E$ be the spectral measure of $L$ and $E_\w$ the spectral measure of $O(\w)$ for a.e. $\w\in\Omega$. Then, for any Borel set $B\subset\C$, $E(B)$ is a $\ZZ$-preserving operator and its range operator is given by $E_\w(B)$.
	\end{theo}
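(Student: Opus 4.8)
The plan is to obtain the statement fiber by fiber through functional calculus, moving from continuous functions, where the identification is immediate, to indicator functions by a bounded-convergence (functional monotone class) argument. Throughout I would use that the assignment $L\mapsto O$ between bounded $\ZZ$-preserving operators and bounded measurable range operators is linear, multiplicative, unital and $*$-preserving, and that it is isometric in the sense of Theorem~\ref{thm:pointwise}\eqref{op-norm}; multiplicativity follows from $\T(L_1L_2)\T^{-1}=(\T L_1\T^{-1})(\T L_2\T^{-1})$ and the fiberwise action of these operators, and the $*$-property is Theorem~\ref{thm:pointwise}(2).

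\emph{Step 1: continuous functional calculus.} First I would check that for every polynomial $p(z,\bar z)$ the operator $p(L,L^*)$ is $\ZZ$-preserving with range operator $\w\mapsto p(O(\w),O(\w)^*)$; this is immediate from the algebraic properties above (and $O(\w)$ is normal a.e.\ by Theorem~\ref{thm:pointwise}(3), so the substitution is unambiguous). Since $L$ is bounded, $\sigma(L)$ is compact, and $\sigma(O(\w))\subseteq\sigma(L)$ for a.e.\ $\w$ by Theorem~\ref{thm:measurable-spectrum}. Given $f\in C(\sigma(L))$, Stone--Weierstrass provides polynomials $p_n(z,\bar z)$ with $\sup_{\sigma(L)}|p_n-f|\to0$, and the spectral mapping inequality $\|p_n(O(\w),O(\w)^*)-f(O(\w))\|_{\mathrm{op}}\le\sup_{\sigma(L)}|p_n-f|$ together with Theorem~\ref{thm:pointwise}\eqref{op-norm} shows that $f(L)$ is $\ZZ$-preserving with range operator $\w\mapsto f(O(\w))$ (continuous functional calculus of the normal operator $O(\w)$ on $\J(\w)$), this range operator being measurable as an essentially uniform limit of measurable ones.

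\emph{Step 2: from continuous to Borel.} Let $\mathcal D$ be the class of bounded Borel functions $h$ on $\sigma(L)$ for which $h(L)$ (Borel functional calculus of $L$) is $\ZZ$-preserving with range operator $\w\mapsto h(O(\w))$, the latter a bounded measurable range operator. By Step~1, $C(\sigma(L))\subseteq\mathcal D$; moreover $\mathcal D$ is a vector space containing the constants. I would then show $\mathcal D$ is closed under uniformly bounded pointwise limits: if $h_n\in\mathcal D$, $\sup_n\|h_n\|_\infty<\infty$ and $h_n\to h$ pointwise, then $h_n(L)\to h(L)$ and, for a.e.\ $\w$, $h_n(O(\w))\to h(O(\w))$ in the strong operator topology (dominated convergence against the respective spectral measures, using $\sigma(O(\w))\subseteq\sigma(L)$). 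For $f$ ranging over a countable dense subset of $V$, the identities $\T(h_n(L)f)(\w)=h_n(O(\w))\T f(\w)$ pass to the limit along a subsequence off a single null set, and since $\{\T f(\w):f\text{ in the dense set}\}$ spans a dense subspace of $\J(\w)$ for a.e.\ $\w$ by Theorem~\ref{Th:Helson}, the range operator of $h(L)$ is $\w\mapsto h(O(\w))$; measurability and the uniform bound survive, and $h(L)$ is $\ZZ$-preserving as a strong limit of such operators. As $C(\sigma(L))$ is a multiplicative family generating the Borel $\sigma$-algebra of $\sigma(L)$, the functional monotone class theorem then yields that $\mathcal D$ consists of all bounded Borel functions.

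\emph{Step 3: conclusion and obstacle.} Applying Step~2 with $h=\chi_{B}$ for a Borel set $B\subseteq\C$, and noting that $\chi_{B}(L)=E(B)$ while $\chi_{B}(O(\w))=E_\w(B)$ (because $E_\w$ is carried by $\sigma(O(\w))\subseteq\sigma(L)$), gives that $E(B)$ is $\ZZ$-preserving with range operator $\w\mapsto E_\w(B)$, as claimed. I note that the $\ZZ$-preservation of $E(B)$ alone can be seen directly: by Theorem~\ref{thm:pointwise} both $L$ and $L^*$ commute with every $T_k$, so $T_k$ lies in the commutant of the von Neumann algebra generated by $L$ and hence commutes with the spectral projections; but the fiber identification still requires the functional-calculus argument. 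I expect the main difficulty to be the limiting step in Step~2: controlling the null sets uniformly — via separability of $V$ and the fiberwise density statement from Theorem~\ref{Th:Helson} — so that the equality $\T(h(L)f)(\w)=h(O(\w))\T f(\w)$ persists from continuous to arbitrary bounded Borel $h$, and checking that measurability of the range operator is preserved under these pointwise limits.
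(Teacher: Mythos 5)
Your proof is correct, and the delicate point you flag in Step 2 does work out: a countable dense subset of $V$ is a set of generators of $V$, so Theorem \ref{Th:Helson} gives the fiberwise density of its fibers in $\J(\w)$, and the a.e.\ identity $\T[h(L)f](\w)=h(O(\w))\T[f](\w)$ extends from that countable set to all of $V$ by boundedness. Note that the paper itself contains no proof of Theorem \ref{thm:spectral-measure}; it is quoted from \cite{BI2019} (see also \cite{Chow1970,Len1974}), where the result is established for decomposable operators on direct integrals by essentially the same route you take --- identify the continuous functional calculus fiberwise via polynomial approximation, Theorem \ref{thm:measurable-spectrum} and the isometry \eqref{op-norm} of Theorem \ref{thm:pointwise}, then pass to bounded Borel functions by a bounded pointwise convergence argument --- so your proposal is a self-contained version of the cited proof rather than a genuinely different one.
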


		In the next section, we will discuss to what extent the diagonalization properties of the range operator can provide the $\ZZ$-preserving operator with a special structure. For this purpose, we are interested in finding measurable selections of the eigenvalues of the range operator.  
		
		Assume first that $L$ is acting on a $\ZZ$-invariant space $V$ such that $\dim \J(\w)<\infty$ for a.e. $\w\in\Omega$. Then, we have that $O(\w):\J(\w)\to\J(\w)$ is an operator acting on a finite-dimensional space for a.e. $\w\in\Omega$. 
		In \cite{ACCP}, a construction of a measurable selection of the eigenvalues of $O$ was obtained in the following sense.
	
	\begin{theo}[\cite{ACCP}]\label{thm:measurable-eigen}
		Let $O:\J\rightarrow \J$ be a bounded measurable range operator on a range function satisfying $\dim \J(\w)<\infty$ for a.e. $\w\in\Omega$. Then, there exist  functions $\lambda_j\in L^\infty(\Omega) $, $j\in\N$, such that
		\begin{enumerate}
			\item  $\lambda_j(\w)\neq\lambda_{j'}(\w)$ for $j\neq j'$ and for a.e. $\w\in\Omega$,
			\item $\sigma(O(\w))=\{\lambda_1(\w),\dots,\lambda_i(\omega)\}$ for a.e. $\w\in A_{n,i}$ and for every $i\leq n$, $i,n \in \N$,
		\end{enumerate} 
		where $A_{n,i}$ are the measurable sets:
		\begin{equation}
		A_{n,i}:= \left\{\w\in A_n\,:\, \#\sigma(O(\w)) = i\right\},
		\end{equation}
		and $\{A_n\}_{n\in\N}$ are the sets defined in Lemma \ref{lem:An}.
	\end{theo}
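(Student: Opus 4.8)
The plan is to reduce the problem to a finite-dimensional eigenvalue-selection statement on each piece $A_n$ and to patch the selections together in a measurable way. First I would fix $n \in \N$ and work on $A_n$, where Lemma~\ref{lem:An} provides the measurable orthonormal basis $\{\T\varphi_1(\w),\dots,\T\varphi_n(\w)\}$ of $\J(\w)$ for a.e.\ $\w \in A_n$. Using this basis I would represent $O(\w)|_{\J(\w)}$ as an $n\times n$ matrix $M(\w) = \big(\langle O(\w)\T\varphi_i(\w),\T\varphi_j(\w)\rangle\big)_{i,j}$ whose entries are measurable functions of $\w$ (combining measurability of the range operator with measurability of the $\T\varphi_i$), and bounded because $\|O\|_{\mathrm{op}} < \infty$. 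The characteristic polynomial $\w \mapsto p_\w(z) = \det(zI - M(\w))$ then has measurable, bounded coefficients, and $\sigma(O(\w)) = \{z : p_\w(z) = 0\}$ for a.e.\ $\w \in A_n$.

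Next I would handle the selection of roots. The set-valued map $\w \mapsto \sigma(O(\w))$ is measurable with non-empty compact values (Theorem~\ref{thm:measurable-spectrum}), so Castaing's Selection Theorem (Theorem~\ref{thm:castaign}) gives measurable functions whose closures recover the spectrum; since each $\sigma(O(\w))$ is finite with at most $n$ points on $A_n$, finitely many of these suffice to enumerate the spectrum pointwise. To force the distinctness in (1), I would peel off the sets $A_{n,i} = \{\w \in A_n : \#\sigma(O(\w)) = i\}$, which are measurable because $\w \mapsto \#\sigma(O(\w))$ is measurable (the number of distinct roots of a polynomial with measurable coefficients, equivalently determined by the vanishing of suitable subresultants/discriminantal quantities). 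On $A_{n,i}$ one can choose, measurably and consistently, an ordering $\lambda_1(\w),\dots,\lambda_i(\w)$ of the $i$ distinct eigenvalues — for instance by lexicographic ordering of $(\mathrm{Re},\mathrm{Im})$, which preserves measurability — and set the remaining $\lambda_{i+1},\dots$ on that set to be arbitrary fixed constants chosen so that global distinctness $\lambda_j(\w) \ne \lambda_{j'}(\w)$ holds; boundedness of the genuine eigenvalues follows from $|\lambda_j(\w)| \le \|O(\w)\|_{\mathrm{op}} \le \|L\|_{\mathrm{op}}$, so $\lambda_j \in L^\infty(\Omega)$.

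Finally I would assemble the global functions. Since $\{A_n\}_{n\in\N}$ are disjoint and $\Omega \setminus (A_\infty \cup A_0) = \bigcup_{n\ge 1} A_n$ (and on $A_0$, $A_\infty$ the statement is vacuous as $\dim\J(\w) < \infty$ forces $\w \notin A_\infty$ and $A_0$ contributes nothing to the spectrum enumeration under the stated conclusion), the $\lambda_j$ defined piecewise on the $A_{n,i}$ glue to measurable, bounded functions on all of $\Omega$, and properties (1) and (2) hold by construction. The main obstacle I anticipate is the measurable root selection together with the distinctness requirement: continuous selection of eigenvalues of a matrix fails in general at points where eigenvalues collide, so one genuinely needs the decomposition into the strata $A_{n,i}$ where the multiplicity pattern is constant, plus a careful argument that this stratification is measurable — this is exactly where Castaing's theorem and the measurability of polynomial-root data do the real work, and it is the step requiring the most care.
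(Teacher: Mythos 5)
The paper does not actually prove this theorem: it is quoted from \cite{ACCP}, where the argument runs exactly along the lines you propose — represent $O(\w)$ on $A_n$ as an $n\times n$ matrix with measurable entries via the measurable orthonormal basis of Lemma \ref{lem:An}, obtain a measurable enumeration of the eigenvalues, and stratify by the sets $A_{n,i}$ to force distinctness; the only real difference is that \cite{ACCP} gets the measurable enumeration from a measurable-selection result for roots of monic polynomials with measurable coefficients, whereas you source it from Theorem \ref{thm:measurable-spectrum} plus Castaing's theorem, which is equally valid (and closer in spirit to how the present paper later proves Theorem \ref{thm:diagonalizable-fibers-s-diag}). Two details to tighten: the ``arbitrary fixed constants'' used to pad $\lambda_{i+1},\lambda_{i+2},\dots$ on $A_{n,i}$ must be chosen pairwise distinct and of modulus strictly larger than $\esssup_{\w\in\Omega}\|O(\w)\|_{\mathrm{op}}$ (otherwise a constant could collide with a genuine eigenvalue at some $\w$), mirroring the $z_0+j$ trick of Lemma \ref{lem:castaign-no-multiplicity}; and the measurability of your lexicographic ordering of the $i$ distinct eigenvalues deserves a short argument — it is simpler to define $\lambda_1(\w)=f_1(\w)$ and $\lambda_{k+1}(\w)=f_{j(\w)}(\w)$ with $j(\w)$ the least index giving a value not in $\{\lambda_1(\w),\dots,\lambda_k(\w)\}$, where $(f_j)_j$ is the Castaing sequence, since on $A_n$ the spectrum is a finite set and this ``first new value'' selection is manifestly measurable. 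With those adjustments your argument is complete and essentially coincides with the cited proof.
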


	\begin{rem}\label{rem:esssup-eigen}
		If $r=\esssup_{\w\in\Omega}\,\# \sigma(O(\w))<\infty$, then $|A_{n,i}|=0$ for every $i> r$. Thus if we discard the	functions $\lambda_j$ such that $\lambda_j(\w)$  is not an eigenvalue of $O(\w)$ for a.e. $\w\in\Omega$, the number of measurable functions constructed, after discarding, will be $r$ in total. 
	\end{rem}

	We remark that $\dim \J(\w)<\infty$ for a.e. $\w\in\Omega$ does not imply that $V$ is finitely generated, as we show in the example below.
	
	\begin{exa}
		Let $\RR=\R$, $\ZZ=\Z$ and $\Omega=[0,1)$. For every $n\in \N$, define the set $E_n := [0,\frac{1}{n})$ and $\psi_n\in L^2(\R)$ by the map $\T$ as $\T[\psi_n]:=e_n\chi_{E_n}$, where $e_n$ is the nth canonical sequence of $\ell_2(\Z)$. Then $V:=S(\{\psi_n\,:\,n\in\N\})$ is a $\Z$-invariant space which is not finitely generated and its range function $\J$ satisfy that $\dim \J(\w)<\infty$ for a.e. $\w\in [0,1)$. Indeed, let $A_n:=E_n\setminus E_{n+1}$ for $n\in\N$, then $[0,1)= \bigcup_{n\in\N} A_n \cup \{0\}$. For every $n\in\N$ and for a.e. $\w\in A_n$, the dimension of $\J(\w)$ is $n$ since $\{\T[\psi_1](\w),\dots,\T[\psi_n](\w)\}$ is an orthonormal basis of $\J(\w)$.
	\end{exa}

	To remove the condition $\dim \J(\w)<\infty$ for a.e. $\w\in\Omega$, Theorem \ref{thm:measurable-eigen} is no longer useful since its proof strongly relies on the fact that the dimension of $\J(\w)$ is finite for a.e. $\w\in \Omega$. In the following section we will see that, under certain conditions, Castaign's Selection Theorem (Theorem \ref{thm:castaign}) will be helpful in this endeavor.

	\section{$\ZZ$-diagonalization}\label{sec:s-diag}
	
	We are interested in studying the structure of bounded, normal and $\ZZ$-preserving operators whose fibers are diagonalizable operators almost everywhere.
	The question that arises is the following. Suppose that $L:V\to V$ is a bounded, normal and $\ZZ$-preserving operator with range operator $O:\J\to\J$. If $O(\w)$ is diagonalizable for a.e. $\w\in \Omega$, does this induce any simpler kind of decomposition for $L$? 
	
	This question has been studied in \cite{ACCP} in the Euclidean setting with $\ZZ=\Z^d$, where a positive answer was obtained for the case of normal $\ZZ$-preserving operators acting on finitely generated $\ZZ$-invariant spaces. For this, the authors introduced three new concepts which they called $s$-eigenvalue, $s$-eigenspace and $s$-diagonalization. 
	
	\subsection{Background}
	
	In this subsection we will review part of the work done in \cite{ACCP}. Along the way, we will translate the statements to the general group setting and we will show the results that can be effortlessly extended to $\ZZ$-invariant spaces that are not finitely generated. 
	
%	 We will then extend these results to $\ZZ$-preserving operators acting on $\ZZ$-invariant spaces that are not finitely generated.

	Given a sequence $a=\{a(s)\}_{s\in\ZZ}\in\ell_1(\ZZ)$, we denote its Fourier transform as
	$$\wh{a}(\w)=\sum_{s\in\ZZ}a(s)e^{-2\pi i \w. s},\quad \w\in\Omega.$$ 
	This extends to $\ell_2(\ZZ)$ and it holds that  $a\in\ell_2(\ZZ)$ if and only if \ $\wh{a} \in L^2(\Omega)$. Moreover, if \ $\wh{a}\in L^{\infty}(\Omega)$, we will say that $a$ is of bounded spectrum.

%	\begin{defi}\label{def:Lambda_a}
%		Given a sequence $a=\{a(s)\}_{s\in \ZZ}$ we denote the formal sum
%		$$\Lambda_a:= \sum_{s\in\ZZ} a(s) T_s.$$
%	\end{defi}

%	\begin{defi}
%		Given $a=\{a(s)\}_{s\in \ZZ}\in\ell_2(\ZZ)$ we denote by $M_{\hat{a}}:L^2(\Omega,\ell_2(\ZZdual))\to L^2(\Omega,\ell_2(\ZZdual))$ the operator defined by $M_{\hat{a}}F(\w):=\wh{a}(\w)F(\w)$, for $F\in L^2(\Omega,\ell_2(\ZZdual))$, $\w\in\Omega$.
%	\end{defi}
%
%	\begin{lem}
%		Let $a\in \ell_2(\ZZ)$, the operator $M_{\hat{a}}:L^2(\Omega,\ell_2(\ZZdual))\to L^2(\Omega,\ell_2(\ZZdual))$ is well-defined and bounded if and only if the sequence $a$ is of bounded spectrum. 
%	\end{lem}
%
%	\begin{proof}
%		Assume $a$ is of bounded spectrum, then
%		$$\|M_{\hat{a}}F\|^2=\int_{\Omega}|\wh{a}(\w)|^2\| F(\w)\|_{\ell_2(\ZZdual)}^2\,d\w\leq \|\wh a\|^2_\infty \|F\|^2.$$
%		On the other hand, assume that $M_{\hat{a}}$ is bounded. Let $E_n=\{\w\in\Omega\,:\, |\wh{a}(\w)|>n \}$, for every $n\in\N$. If $a$ is not a sequence of bounded spectrum, then for every $n$ there exists a set $\tilde{E}_n\subset E_n$ of positive and finite measure. Define $F_n\in L^2(\Omega,\ell_2(\ZZdual))$ as $F_n(\w) = \chi_{\tilde{E}_n}(\w) \,e_0$, where $e_0\in \ell_2(\ZZdual)$, $e_0(0)=1$ and $e_0(s)=0$ for every $s\neq 0$. Then, $\|M_{\hat{a}} F_n\| > n |\tilde{E}_n|$ for every $n\in\N$, which is a contradiction. Hence, $a$ must be of bounded spectrum.
%	\end{proof}

	\begin{defi}\label{def:Lambda_a}
		Given $a\in\ell_2(\ZZ)$ of bounded spectrum, let $M_{\hat{a}}:L^2(\Omega,\ell_2(\ZZdual))\to L^2(\Omega,\ell_2(\ZZdual))$ be the multiplication operator by \ $\wh{a}$, as in Definition \ref{def:M_psi}. We denote by $\Lambda_a:L^2(\RR)\to L^2(\RR)$ the operator defined by $$\Lambda_a:=\T^{-1} M_{\hat{a}} \T,$$ 
		which is clearly well-defined and bounded. 
	\end{defi}

	Let us denote by $\mathfrak{B}$ the following set:
	$$\mathfrak B  :=\{\varphi\in L^2(\RR)\,:\, \{T_k\varphi\}_{k\in\ZZ} \text{ is a Bessel sequence}\}.$$
	Recall that if $a\in\ell_2(\ZZ)$ and $f\in \mathfrak B$, then the series \begin{equation}\label{eq:series-Ts}
		\sum_{s\in\ZZ}a(s)T_s f
	\end{equation} converges in $L^2(\RR)$. 
	
	%Also, it is well known that $\varphi\in\mathfrak B$ if and only if there exists a constant $B>0$ such that for a.e. $\w\in \Omega$,
%	\begin{equation}
%		\sum_{\ell\in\ZZdual}|\wh{\varphi}(\w+\ell)|^2\leq B.
%	\end{equation}

	\begin{prop}
		Let $a=\{a(s)\}_{s\in \ZZ}\in\ell_2(\ZZ)$ be of bounded spectrum. 
		If $f\in\mathfrak B$, then $$\Lambda_a f =\sum_{s\in\ZZ}a(s)T_sf,$$
		with convergence in $L^2(\RR)$.
	\end{prop}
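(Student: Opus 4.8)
The plan is to transport the identity through the fiberization isometry $\T$ and verify it fiber by fiber. Write $g:=\sum_{s\in\ZZ}a(s)T_s f$, which is a well-defined element of $L^2(\RR)$ by the Bessel hypothesis on $f$ together with $a\in\ell_2(\ZZ)$ (as recalled above). Since $\T$ is a bijective isometry and $\Lambda_a f=\T^{-1}M_{\wh a}\T[f]=\T^{-1}\big(\wh a\cdot\T[f]\big)$ by Definition \ref{def:Lambda_a}, it suffices to prove
\[
\T[g](\w)=\wh a(\w)\,\T[f](\w)\qquad\text{for a.e. }\w\in\Omega .
\]

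To this end I would fix an increasing exhaustion $F_1\subseteq F_2\subseteq\cdots$ of $\ZZ$ by finite sets with $\bigcup_n F_n=\ZZ$, and set $g_n:=\sum_{s\in F_n}a(s)T_s f$ and $p_n(\w):=\sum_{s\in F_n}a(s)e^{-2\pi i\w\cdot s}$. By linearity of $\T$ and the intertwining relation \eqref{eq:Tau-intertwining},
\[
\T[g_n](\w)=\sum_{s\in F_n}a(s)\,\T[T_s f](\w)=p_n(\w)\,\T[f](\w),\qquad\text{a.e. }\w\in\Omega .
\]
Two convergences are then available: on one hand $g_n\to g$ in $L^2(\RR)$, so $\T[g_n]\to\T[g]$ in $L^2(\Omega,\ell_2(\ZZdual))$ because $\T$ is an isometry; on the other hand $p_n\to\wh a$ in $L^2(\Omega)$, since $a\in\ell_2(\ZZ)$ and the Fourier series of $a$ converges (unconditionally) in $L^2(\Omega)$.

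Finally I would identify the two limits. Extracting a subsequence along which $p_{n_k}\to\wh a$ pointwise a.e. on $\Omega$, and using that $\|\T[f](\w)\|_{\ell_2(\ZZdual)}<\infty$ for a.e.\ $\w$ (as $\T[f]\in L^2(\Omega,\ell_2(\ZZdual))$), one gets $\T[g_{n_k}](\w)=p_{n_k}(\w)\T[f](\w)\to\wh a(\w)\T[f](\w)$ for a.e.\ $\w$; passing to a further subsequence along which $\T[g_{n_k}]\to\T[g]$ pointwise a.e., uniqueness of the pointwise limit forces $\T[g](\w)=\wh a(\w)\T[f](\w)$ a.e., which is what we want. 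The one delicate point — and the only real obstacle — is precisely this passage to the limit: since $\T[f]$ is a priori merely square-integrable, multiplication by the $L^2(\Omega)$-convergent sequence $p_n$ need not converge in $L^2(\Omega,\ell_2(\ZZdual))$ without extra control, which is why one either extracts almost-everywhere convergent subsequences as above, or first observes that $f\in\mathfrak B$ in fact forces $\esssup_{\w\in\Omega}\|\T[f](\w)\|_{\ell_2(\ZZdual)}<\infty$ and then multiplies the $L^2$-convergence of $p_n$ by this uniform bound to obtain $p_n\T[f]\to\wh a\,\T[f]$ directly in $L^2(\Omega,\ell_2(\ZZdual))$.
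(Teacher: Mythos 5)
Your argument is correct and follows essentially the same route as the paper: transport the series through the fiberization map $\T$ and use the intertwining relation \eqref{eq:Tau-intertwining} to identify the fibers with $\wh{a}(\w)\,\T[f](\w)$, then pull back by $\T^{-1}$. The only difference is that you make explicit the interchange of $\T$ with the infinite sum (partial sums together with a.e.-convergent subsequences, or alternatively the uniform fiber bound $\esssup_{\w}\|\T[f](\w)\|_{\ell_2(\ZZdual)}<\infty$ coming from the Bessel property of $f$), a limiting step the paper's computation performs implicitly.
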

	
	\begin{proof} Given that \eqref{eq:series-Ts} is convergent in $L^2(\RR)$, for a.e. $\w\in\Omega$,
		\begin{align*}
		\T\left(\sum_{s\in\ZZ}a(s)T_s f\right)(\w) & = \left\{\mathcal F\left(\sum_{s\in\ZZ}a(s)T_s f\right)(\w + \ell)\right\}_{\ell\in\ZZdual}\\
		 &= \left\{\sum_{s\in\ZZ}a(s)e^{-2\pi i\w. s } \wh{f}(\w + \ell)\right\}_{\ell\in\ZZdual}\\
		 &=  \wh{a}(\w)\{\wh{f}(\w+\ell)\}_{\ell\in \ZZdual}\\
		 & =M_{\hat{a}}\mathcal T f(\w).
		\end{align*}
		Thus, $\Lambda_af  = \sum_{s\in\ZZ}a(s)T_s f$.
	\end{proof}

	On the other hand, $\mathfrak B$ is a dense set of $L^2(\RR)$ since the functions of compact support in $L^2(\RR)$ belong to $\mathfrak B$ (see \cite[Proposition 9.3.4]{Chr2003}, where a proof is given in the Euclidean case and can be immediately extended to our group context). Then, if $a\in\ell_2(\ZZ)$ is of bounded spectrum, it is possible to give an alternative definition for $\Lambda_a$ as the continuous extension of the bounded operator $\tilde\Lambda_a:\mathfrak{B}\to L^2(\RR)$, defined by 
	\begin{equation}\label{eq:tilde-Lambda_a}
		\tilde\Lambda_a f := \sum_{s\in\ZZ}a(s)T_sf.
	\end{equation}
	For this reason, sometimes we will write $\Lambda_a f$ as the series  \eqref{eq:tilde-Lambda_a}, even for functions which are not in $\mathfrak{B}$, meaning the extension of $\tilde\Lambda_a$ to $L^2(\RR)$.
	
	Notice that in the particular case when $a\in\ell_1(\ZZ)$ (and thus of bounded spectrum), an easy computation shows that $\Lambda_a=\sum_{s\in\ZZ}a(s)T_s$ where the convergence is in the strong operator topology of $\mathcal B(L^2(\RR))$.

%	\begin{lem}\label{lem:Lambda_a bounded}
%		Let $a\in \ell_2(\ZZ)$, the operator $\Lambda_a:L^2(\RR)\to L^2(\RR)$ is well-defined and bounded if and only if the sequence $a$ is of bounded spectrum. 
%	\end{lem}
	
%	\begin{proof} 
%		Consider $M_a:L^2(\wh\RR)\to L^2(\wh\RR)$ to be the operator defined by $M_a(\psi)=\wh{a}\psi$, for $\psi\in L^2(\wh\RR)$. It holds that $M_a$ is well defined and bounded if and only if $\wh{a}\in L^\infty(\Omega)$. The fact that $M_a$ and $\Lambda_a$ are unitarily equivalent via the Fourier transform completes the proof.
%	\end{proof}
	
	Observe that if $V$ is $\ZZ$-invariant, then $\Lambda_a(V)\subseteq V$ whenever it is bounded, and in this case $\Lambda_a: V\to V$  is a  $\ZZ$-preserving  operator with corresponding range operator $O_a(\omega)=\wh{a}(\omega)\mathcal I_{\w}$, a.e. $\omega\in\Omega$, where $\mathcal I_{\w}$ denotes the identity operator on $\J(\w)$ for a.e. $\w\in\Omega$.
	
	The following  corresponds to the definition of $s$-eigenvalue and $s$-eigenvector in \cite{ACCP}.
	
	\begin{defi}
		Let $V \subset L^2(\RR)$ be a $\ZZ$-invariant space and $L:V\rightarrow V$ a bounded $\ZZ$-preserving operator. Given $a\in \ell_{2}(\ZZ)$ a sequence of bounded spectrum, we say that $\Lambda_a$ is a $\ZZ$-eigenvalue of $L$ if
		\begin{equation*} 
		V_a := \ker\left(L - \Lambda_a\right)\neq\{0\}.
		\end{equation*}
		We call $V_a$ the $\ZZ$-eigenspace associated to $\Lambda_a$.
	\end{defi} 
	
	These $\ZZ$-eigenspaces $V_a$ are $\ZZ$-invariant spaces and satisfy that $LV_a\subseteq V_a.$  The proposition below was proved in  \cite{ACCP} showing  that the $\ZZ$-eigenvalues of $L$ are intrinsically related to the eigenvalues of the range operator of $L$.
	
	\begin{prop}\label{prop:eigen}
		Let $V \subset L^2(\RR)$ be a $\ZZ$-invariant space with range function $\J$, $L:V\rightarrow V$ a bounded $\ZZ$-preserving operator with range operator $O$ and  $a\in \ell_{2}(\ZZ)$ a sequence of bounded spectrum. Then, the following statements hold:
		\begin{enumerate}
			\item\label{prop-eigen-1} If $\Lambda_a$ is a $\ZZ$-eigenvalue of $L$, then $\wh{a}(\omega)$ is an eigenvalue of $O(\omega)$ for a.e. $\omega\in \Sigma(V_a)$.
			\item  The mapping $\omega \mapsto \ker\left(O(\omega) - \wh{a}(\omega)\mathcal I_{\w}\right)$, $\omega \in \Omega$ is the measurable range function of $V_a$, which we will denote by $\J_{a}$.
		\end{enumerate}
	\end{prop}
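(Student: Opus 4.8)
The plan is to express both assertions in terms of the single bounded $\ZZ$-preserving operator $L-\Lambda_a:V\to V$ and then invoke the dictionary between $\ZZ$-preserving operators and range operators provided by Theorem \ref{thm:pointwise}. First I would note that, $a$ being of bounded spectrum, $\Lambda_a=\T^{-1}M_{\wh a}\T$ is bounded on $L^2(\RR)$, and since $V$ is $\ZZ$-invariant we have $\Lambda_a(V)\subseteq V$; hence $L-\Lambda_a$ maps $V$ into $V$ and is $\ZZ$-preserving. From $\T[Lf](\w)=O(\w)\T[f](\w)$ and $\T[\Lambda_a f](\w)=\wh a(\w)\T[f](\w)$, together with the fact that $\T[f](\w)\in\J(\w)$ while $\mathcal I_\w$ is the identity on $\J(\w)$, linearity of $\T$ gives
\[
\T\big[(L-\Lambda_a)f\big](\w)=\big(O(\w)-\wh a(\w)\mathcal I_\w\big)\T[f](\w),\qquad f\in V,\ \text{a.e. }\w\in\Omega.
\]
The map $\w\mapsto O(\w)-\wh a(\w)\mathcal I_\w$ is a range operator $\J\to\J$; it is bounded (by $\|L\|_{\text{op}}+\|\wh a\|_{\infty}$) and measurable (because $O$ is measurable and $\wh a\in L^\infty(\Omega)$), so by the one-to-one correspondence it is, up to a.e.\ equality, the range operator of $L-\Lambda_a$.

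Next I would prove (2). Applying Theorem \ref{thm:pointwise}, item (\ref{ker}), to $L-\Lambda_a$ shows that $V_a=\ker(L-\Lambda_a)$ is a $\ZZ$-invariant subspace whose range function is $\w\mapsto\ker\!\big(O(\w)-\wh a(\w)\mathcal I_\w\big)$ for a.e.\ $\w\in\Omega$. Being the range function of a $\ZZ$-invariant space, Theorem \ref{Th:Helson} guarantees that this map is measurable and is, in the sense of Remark \ref{range-unicidad}, \emph{the} measurable range function of $V_a$; this is precisely the content of (2), and we denote it $\J_a$.

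Finally, (1) follows at once from (2): by definition $\Sigma(V_a)=\{\w\in\Omega:\dim\J_a(\w)>0\}$, so for a.e.\ $\w\in\Sigma(V_a)$ one has $\dim\ker\!\big(O(\w)-\wh a(\w)\mathcal I_\w\big)>0$; choosing a nonzero vector $v$ in this kernel gives $v\in\J(\w)$ with $O(\w)v=\wh a(\w)v$, so that $\wh a(\w)\in\sigma_p(O(\w))$.

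I do not anticipate a genuine obstacle: the argument is bookkeeping built on Theorem \ref{thm:pointwise}. The only step needing a little care is the identification of the range operator of $L-\Lambda_a$ — one must check that subtracting the diagonal-type range operator $\wh a(\w)\mathcal I_\w$ keeps us within the class of bounded measurable range operators, and then appeal to uniqueness of the correspondence; once that is done, the measurability claimed in (2) is automatic from Helson's theorem and needs no separate argument.
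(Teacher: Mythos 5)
Your proof is correct and follows the route the paper intends (it states this proposition citing \cite{ACCP}, whose argument is the same bookkeeping): identify $O(\w)-\wh a(\w)\mathcal I_{\w}$ as the bounded measurable range operator of the $\ZZ$-preserving operator $L-\Lambda_a$ (using that $\Lambda_a|_V$ has range operator $\wh a(\w)\mathcal I_{\w}$ and the a.e.\ uniqueness of the correspondence), then apply item (\ref{ker}) of Theorem \ref{thm:pointwise} to get (2), from which (1) is immediate by the definition of $\Sigma(V_a)$. No gaps.
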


	\begin{rem}
		In fact, the converse for statement \eqref{prop-eigen-1} in Proposition \ref{prop:eigen} is true in the following sense: if \ $\wh{a}(\w)$ is an eigenvalue of $O(\w)$ for a.e. $\w$ in a set of positive measure, then $\Lambda_a$ is a $\ZZ$-eigenvalue of $L$.
	\end{rem}

	The following is an extension of the definition of $s$-diagonalization given in \cite{ACCP}, which was originally stated for finitely generated $\ZZ$-invariant spaces. Here, we extend the definition to any $\ZZ$-invariant space.
	
	\begin{defi}\label{def:s-diag}
		Let $V \subset L^2(\RR)$ be a $\ZZ$-invariant space and  $L:V\rightarrow V$ a bounded, $\ZZ$-preserving operator. We say that $L$ is $\ZZ$-diagonalizable if there exists a set of sequences of bounded spectrum $\{a_j\}_{j\in I}\subseteq \ell_2(\ZZ)$, where $I$ is at most countable, such that $\Lambda_{a_j}$ is a $\ZZ$-eigenvalue of $L$ for every $j\in I$ and $V$ can be decomposed into the orthogonal sum
		\begin{equation}\label{direct sum V}
		V = \underset{j\in I}{\bigpoplus} V_{a_j}.
		\end{equation}
		Given such a decomposition, we will say that $\{a_j\}_{j\in I}\subseteq \ell_2(\ZZ)$ is a $\ZZ$-diagonalization of $L$.
	\end{defi}
	
	If an operator $L$ is $\ZZ$-diagonalizable, a decomposition as in \eqref{direct sum V} exists but is not unique. Observe that if $\{a_j\}_{j\in I}$ is a $\ZZ$-diagonalization of $L$, then 
	\begin{equation}\label{eq:sum-L}
		L=\sum_{j\in I} \Lambda_{a_j}P_{V_{a_j}},
	\end{equation}
	where $P_{V_{a_j}}$ is the orthogonal projection of $V$ onto $V_{a_j}$ and, if $I$ is an infinite set, the convergence is in the strong operator topology sense.

%	The following result follows  straightforwardly from the definition above and Proposition \ref{prop:eigen}.
%	\begin{theo}[\cite{ACCP}]\label{thm:L-diag-O-diag}
%		Let $V$ be a $\ZZ$-invariant space and $L:V\to V$ a bounded $\ZZ$-preserving operator with range operator $O$. Then, if $L$ is $\ZZ$-diagonalizable, $O(\w)$ is diagonalizable for a.e. $\w\in\Omega$.
%	\end{theo}

	The next theorem enumerates some results regarding $\ZZ$-diagonalizable operators. Statements \eqref{thm:L-diag-O-diag1} and \eqref{thm:L-diag-O-diag3} are extended versions of \cite[Theorem 6.4]{ACCP} and \cite[Theorem 6.16]{ACCP} respectively.

	\begin{theo}\label{thm:L-diag-O-diag}
		Let $V$ be a $\ZZ$-invariant space with range function $\J$ and $L:V\to V$ a bounded $\ZZ$-preserving operator with range operator $O$. Then, the following statements hold:
		\begin{enumerate}
			\item\label{thm:L-diag-O-diag1} If $L$ is $\ZZ$-diagonalizable, $O(\w)$ is diagonalizable for a.e. $\w\in\Omega$. Moreover, if $\{a_j\}_{j\in I}$ is a $\ZZ$-diagonalization of $L$, then $\sigma_p(O(\w))\subset\{\wh{a}_j(\w)\,:\,j\in I\}$ for a.e. $\w\in\Omega$.
			\item\label{thm:L-diag-O-diag2} If $L$ is $\ZZ$-diagonalizable, $L$ is normal.
			\item\label{thm:L-diag-O-diag3} If $\dim \J(\w)<\infty$ for a.e. $\w\in\Omega$ and $L$ is normal, then $L$ is $\ZZ$-diagonalizable.
		\end{enumerate}
	\end{theo}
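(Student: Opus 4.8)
The plan is to establish \eqref{thm:L-diag-O-diag1} and \eqref{thm:L-diag-O-diag2} together, by reading off the range operator of a $\ZZ$-diagonalizable operator, and then to obtain \eqref{thm:L-diag-O-diag3} by producing an explicit $\ZZ$-diagonalization from the measurable eigenvalue functions of Theorem~\ref{thm:measurable-eigen}. Two elementary facts about range functions will be used repeatedly. First, if $W_1\subseteq W_2$ are $\ZZ$-invariant subspaces then $\J_{W_1}(\w)\subseteq\J_{W_2}(\w)$ for a.e.\ $\w$ (immediate from the ``moreover'' clause of Theorem~\ref{Th:Helson} applied to a generating set of $W_1$); combined with Proposition~\ref{prop:perp} this shows that $W_1\perp W_2$ forces $\J_{W_1}(\w)\perp\J_{W_2}(\w)$ for a.e.\ $\w$. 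Second, conversely, since the fiberization $\T$ is an isometry, $\J_{W_1}(\w)\perp\J_{W_2}(\w)$ a.e.\ forces $W_1\perp W_2$; and for any countable family $\{W_j\}$ of $\ZZ$-invariant subspaces with pairwise a.e.\ orthogonal range functions, $\bigpoplus_j W_j$ has range function $\overline{\bigoplus_j\J_{W_j}(\w)}$ a.e., again by the ``moreover'' clause of Theorem~\ref{Th:Helson}.

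For \eqref{thm:L-diag-O-diag1}--\eqref{thm:L-diag-O-diag2}, suppose $\{a_j\}_{j\in I}$ is a $\ZZ$-diagonalization of $L$, so $V=\bigpoplus_{j\in I}V_{a_j}$ orthogonally, and recall from Proposition~\ref{prop:eigen} that the range function of $V_{a_j}$ is $\J_{a_j}(\w)=\ker\bigl(O(\w)-\wh{a}_j(\w)\mathcal I_\w\bigr)$. By the facts above the $\J_{a_j}(\w)$ are pairwise a.e.\ orthogonal and $\J(\w)=\overline{\bigoplus_{j\in I}\J_{a_j}(\w)}$ for a.e.\ $\w$, an orthogonal decomposition of $\J(\w)$ into eigenspaces of $O(\w)$, on the $j$-th of which $O(\w)$ acts as the scalar $\wh{a}_j(\w)$ (with $|\wh{a}_j(\w)|\le\|L\|$ whenever that summand is nonzero, since $O(\w)$ is bounded). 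Hence $O(\w)$ is the orthogonal direct sum $\bigoplus_{j\in I}\wh{a}_j(\w)\,\mathrm{id}_{\J_{a_j}(\w)}$, which is normal ($O(\w)^*$ acts as $\overline{\wh{a}_j(\w)}$ on $\J_{a_j}(\w)$, so $O(\w)O(\w)^*$ and $O(\w)^*O(\w)$ both act as $|\wh{a}_j(\w)|^2$ on each summand) and diagonalizable (concatenate orthonormal bases of the nonzero summands). If moreover $v=\sum_j v_j$ with $v_j\in\J_{a_j}(\w)$ is an eigenvector of $O(\w)$ with eigenvalue $\mu$, then $\wh{a}_j(\w)v_j=\mu v_j$ for all $j$, so $\mu=\wh{a}_{j_0}(\w)$ for some $j_0$ with $v_{j_0}\ne0$; this gives \eqref{thm:L-diag-O-diag1}. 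Since $O(\w)$ is normal for a.e.\ $\w$, part (3) of Theorem~\ref{thm:pointwise} yields that $L$ is normal, which is \eqref{thm:L-diag-O-diag2}.

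For \eqref{thm:L-diag-O-diag3}, assume $\dim\J(\w)<\infty$ a.e.\ and $L$ normal, so by part (3) of Theorem~\ref{thm:pointwise} $O(\w)$ is normal on the finite-dimensional space $\J(\w)$ for a.e.\ $\w$; hence $\sigma(O(\w))=\sigma_p(O(\w))$ and $\J(\w)$ is the orthogonal sum of the eigenspaces of $O(\w)$. Theorem~\ref{thm:measurable-eigen} furnishes $\lambda_j\in L^\infty(\Omega)$, $j\in\N$, pairwise a.e.\ distinct, with $\sigma(O(\w))=\{\lambda_1(\w),\dots,\lambda_i(\w)\}$ for a.e.\ $\w\in A_{n,i}$. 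Since $\Omega$ is a Borel section of the compact group $\wh{\RR}/\ZZdual$ it has finite Haar measure, so each $\lambda_j\in L^2(\Omega)$ and there is $a_j\in\ell_2(\ZZ)$ with $\wh{a}_j=\lambda_j$; as $\wh{a}_j\in L^\infty(\Omega)$, $a_j$ is of bounded spectrum. Put $V_{a_j}=\ker(L-\Lambda_{a_j})$, with range function $\J_{a_j}(\w)=\ker\bigl(O(\w)-\lambda_j(\w)\mathcal I_\w\bigr)$ by Proposition~\ref{prop:eigen}. On $A_{n,i}$ the spectrum of $O(\w)$ is exactly $\{\lambda_1(\w),\dots,\lambda_i(\w)\}$ with the $\lambda_j(\w)$ pairwise distinct, so $\J_{a_j}(\w)=\{0\}$ for $j>i$ while $\J(\w)=\bigoplus_{j\le i}\J_{a_j}(\w)$; hence $\J(\w)=\bigoplus_{j\in\N}\J_{a_j}(\w)$ orthogonally for a.e.\ $\w$. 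By the facts above the $V_{a_j}$ are mutually orthogonal in $L^2(\RR)$, so $W:=\bigpoplus_{j\in\N}V_{a_j}$ is $\ZZ$-invariant with range function $\overline{\bigoplus_j\J_{a_j}(\w)}=\J(\w)=\J_V(\w)$ a.e., whence $W=V$ by the uniqueness part of Theorem~\ref{Th:Helson}. Discarding the indices $j$ with $V_{a_j}=\{0\}$ leaves an at most countable family $\{a_j\}$ exhibiting $V$ as an orthogonal sum of $\ZZ$-eigenspaces of $L$, i.e.\ a $\ZZ$-diagonalization.

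The main obstacle I expect is the bookkeeping in this dictionary between orthogonal direct sums of $\ZZ$-invariant subspaces and a.e.\ orthogonal direct sums of their range functions: passing from orthogonality of the $V_{a_j}$ to a.e.\ orthogonality of the fibers $\J_{a_j}(\w)$ via Proposition~\ref{prop:perp} and monotonicity of range functions, passing back via the isometry $\T$, and invoking uniqueness in Helson's theorem to identify $\bigpoplus_j V_{a_j}$ with $V$. Alongside this, one must check that the countably many eigenvalue functions $\lambda_j$ of Theorem~\ref{thm:measurable-eigen} genuinely lift to sequences in $\ell_2(\ZZ)$ of bounded spectrum, which is where the finiteness of $|\Omega|$ enters. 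None of these steps is deep, but they must be assembled with care.
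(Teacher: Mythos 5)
Your proof is correct and follows essentially the same route as the paper: part (1) via Proposition \ref{prop:eigen} and the fiberwise reading of the orthogonal decomposition, part (2) via pointwise normality of $O(\w)$ and Theorem \ref{thm:pointwise}, and part (3) via Theorem \ref{thm:measurable-eigen} and lifting the bounded measurable eigenvalue functions $\lambda_j$ to bounded-spectrum sequences $a_j\in\ell_2(\ZZ)$. You merely make explicit the range-function bookkeeping (orthogonality of fibers, identification of $\bigpoplus_j V_{a_j}$ with $V$ via uniqueness in Theorem \ref{Th:Helson}) that the paper treats as straightforward.
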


	\begin{proof}
		The statement in \eqref{thm:L-diag-O-diag1} follows  straightforwardly from the Definition \ref{def:s-diag} and Proposition \ref{prop:eigen}.
		
		In order to see \eqref{thm:L-diag-O-diag2}, observe that if $L$ is $\ZZ$-diagonalizable, then $O(\w)$ is normal for a.e. $\w\in\Omega$ since it is diagonalizable for a.e. $\w\in\Omega$. Thus, by Theorem \ref{thm:pointwise}, $L$ is normal.
		
		Finally, we prove \eqref{thm:L-diag-O-diag3}. If $L$ is normal then $O(\w)$ is normal for a.e. $\w\in\Omega$ due to Theorem \ref{thm:pointwise}. Thus, we have that $O(\w)$ is a normal operator acting on a finite-dimensional space $\J(\w)$, and hence diagonalizable for a.e. $\w\in\Omega$. 
		
		By Theorem \ref{thm:measurable-eigen}, there exist functions $\lambda_j\in L^\infty(\Omega)$, $j\in\N$, such that for a.e. $\w\in\Omega$ we have the following orthogonal decomposition
		\begin{equation}\label{eq:J-decomp}
		\J(\w)=\underset{j\in\N}{\bigpoplus}\ker(O(\w)-\lambda_j(\w)\mathcal I_{\w}).
		\end{equation}
		We discard the functions such that $\ker(O(\w)-\lambda_j(\w)\mathcal I_{\w})=\{0\}$ for a.e. $\w\in\Omega$. Since for every $j$, $\lambda_j\in L^\infty(\Omega)$, there exists a sequence of bounded spectrum $a_j\in\ell_2(\ZZ)$  such that $\wh{a}_j=\lambda_j$. Then, $\Lambda_{a_j}$ is a $\ZZ$-eigenvalue of $L$ for every $j$ and by \eqref{eq:J-decomp} we have the orthogonal decomposition  $$V=\underset{j}{\bigpoplus} V_{a_j}.$$
	\end{proof}

	Notice that because of Remark \ref{rem:esssup-eigen}, if $V$ is finitely generated and $L$ is $\ZZ$-diagonalizable, then there exists a $\ZZ$-diagonalization where the sum in \eqref{eq:sum-L} is finite.	
	
	In general it is uncertain if the diagonalization of $O(\w)$ for a.e. $\w\in\Omega$ implies that $L$ is $\ZZ$-diagonalizable. This is due to the fact that it depends on the possibility to obtain a measurable selection of the eigenvalues of the range operator.

	\subsection{$\ZZ$-diagonalization on general $\ZZ$-invariant spaces} 	
	
	In this subsection we establish conditions on the range operator of a $\ZZ$-preserving operator acting on a general $\ZZ$-invariant space in order to admit a $\ZZ$-diagonalization. 	%The main difficulty in this task is to obtain a measurable selection of the eigenvalues of the range operator. 
	For this, we will make use of the following lemma.
	
	\begin{lem}\label{lem:castaign-no-multiplicity}
		Let $(X,\mathcal M)$ be a measurable space, and let  $F:X\rightsquigarrow \C$ be a measurable set-valued map to non-empty closed values such that $F(x)\subseteq K$ for every $x\in X$ and $K\subset \C$ a compact set. Then, there exists a sequence of measurable bounded functions $g_j:X\to \C$, $j\in \N$ such that for every  $j\neq j',\;\, g_j(x)\neq g_{j'}(x)$ for every $x\in X$ and 
		\begin{equation}
		F(x)\subset\overline{\{g_j(x)\,:\,j\in\N\}}, \qquad x\in X.
		\end{equation}
		
	\end{lem}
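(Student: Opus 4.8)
The plan is to feed $F$ into Castaign's Selection Theorem and then separate the resulting selections pointwise by a small, carefully chosen perturbation; the key observation is that we only need the inclusion $F(x)\subseteq\overline{\{g_j(x):j\in\N\}}$, not equality, and this slack is exactly what leaves room to perturb. First I would apply Theorem~\ref{thm:castaign} to $F$, regarded as a measurable set-valued map into the complete separable metric space $\C$ with non-empty closed values: this produces measurable selections $f_j:X\to\C$, $j\in\N$, with $F(x)=\overline{\{f_j(x):j\in\N\}}$ for every $x\in X$. Since $f_j(x)\in F(x)\subseteq K$, putting $R:=\sup_{z\in K}|z|<\infty$ gives $|f_j(x)|\le R$ for all $x$ and $j$. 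The only property that may fail is pointwise distinctness of the $f_j$, so the task reduces to replacing $\{f_j\}$ by a pointwise-distinct measurable family whose pointwise closure still contains every value $f_j(x)$.

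Next, I would fix a bijection $m\mapsto(j(m),k(m))$ of $\N$ onto $\N\times\N$ and build measurable functions $g_m:X\to\C$ recursively so that, for every $x\in X$,
\[
|g_m(x)-f_{j(m)}(x)|=\tfrac1{k(m)}\quad\text{and}\quad g_m(x)\notin\{g_1(x),\dots,g_{m-1}(x)\}.
\]
For $m=1$ take $g_1:=f_{j(1)}+\tfrac1{k(1)}$. For the inductive step, given $g_1,\dots,g_{m-1}$, for each $i<m$ set $\phi_i(x):=\arg\bigl(g_i(x)-f_{j(m)}(x)\bigr)$ whenever $|g_i(x)-f_{j(m)}(x)|=\tfrac1{k(m)}$ and $\phi_i(x):=0$ otherwise; these are measurable. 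The at most $m-1$ points $\phi_1(x),\dots,\phi_{m-1}(x)$ leave, on the circle $\R/2\pi\Z$, a complementary open arc of length $\ge 2\pi/(m-1)>0$; let $\theta_m(x)$ be the midpoint of the longest such arc, selected by a fixed deterministic tie-breaking rule (sort the points, pick the widest gap, break ties by least left endpoint). Then $\theta_m$ is measurable in $x$ and $\theta_m(x)$ differs from every $\phi_i(x)$, so $g_m(x):=f_{j(m)}(x)+\tfrac1{k(m)}e^{i\theta_m(x)}$ is measurable, has the prescribed modulus, and is distinct from each $g_i(x)$ with $i<m$: if $|g_i(x)-f_{j(m)}(x)|\ne\tfrac1{k(m)}$ this is immediate, and otherwise $g_m(x)=g_i(x)$ would force $\theta_m(x)=\phi_i(x)$, which was excluded.

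Finally I would check the conclusion. Each $g_m$ is measurable and $|g_m(x)|\le R+1$, hence bounded; the family $\{g_m\}$ is pointwise distinct by construction; and for the covering property, fix $x\in X$ and $j\in\N$: since the bijection meets $\{j\}\times\N$ infinitely often, there are indices $m$ with $j(m)=j$ and $k(m)$ arbitrarily large, so $f_j(x)\in\overline{\{g_m(x):m\in\N\}}$. Thus $\{f_j(x):j\in\N\}\subseteq\overline{\{g_m(x):m\in\N\}}$ and, passing to closures, $F(x)=\overline{\{f_j(x):j\in\N\}}\subseteq\overline{\{g_m(x):m\in\N\}}$. Relabelling $\{g_m\}_{m\in\N}$ as $\{g_j\}_{j\in\N}$ completes the argument.

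The main obstacle is the inductive step: one must dodge the finitely many previously built functions \emph{simultaneously at every point $x$}, and do so measurably. Fixing the perturbation modulus at $\tfrac1{k(m)}$ reduces this to choosing, measurably in $x$, a single angle on the circle that avoids a finite moving set of forbidden angles, which the longest-gap-midpoint rule settles; the hypothesis that $K$ is compact is used only to keep the selections, and hence the $g_j$, bounded. Everything else — invoking Castaign, the bookkeeping bijection, and the concluding density argument — is routine.
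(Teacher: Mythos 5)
Your argument is correct, but it takes a genuinely different route from the paper. The paper also starts from Castaign's theorem, but then handles collisions in a much more elementary way: it keeps $g_j(x)=f_j(x)$ off the measurable collision set $E_j=\{x: f_j(x)\in\{g_1(x),\dots,g_{j-1}(x)\}\}$ and on $E_j$ simply redefines $g_j(x):=z_0+j$, where $z_0$ is chosen so that $z_0+j\notin K$ for all $j$; distinctness is automatic because the replacement values are pairwise distinct and lie outside $K$, and the inclusion $\{f_j(x):j\in\N\}\subset\{g_j(x):j\in\N\}$ holds pointwise without even taking closures (a value of $f_j$ is discarded only when it already appears among the earlier $g_i$). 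You instead perturb \emph{every} selection onto small circles of radius $1/k(m)$ and recover density through the bijection of $\N$ with $\N\times\N$, dodging the previously built functions by a measurable choice of angle (longest-gap midpoint with a deterministic tie-break). This is valid: the angle-avoidance is sound since the midpoint of an open complementary arc cannot equal any forbidden angle, the tie-breaking rule is a Borel function of the finitely many angles $\phi_1(x),\dots,\phi_{m-1}(x)$, boundedness follows from $|g_m|\le \sup_{z\in K}|z|+1$, and the closure inclusion suffices because the lemma only asserts $F(x)\subset\overline{\{g_j(x):j\in\N\}}$. What your approach costs is the extra bookkeeping (the $\N\times\N$ indexing and the measurability of the longest-gap selector), and it only yields the closure inclusion rather than the paper's pointwise containment of the $f_j$-values; what it buys is that all modified values stay within distance $1$ of $K$, whereas the paper parks them at artificial points $z_0+j$ far from the spectrum --- a difference that is immaterial for the way the lemma is used in Theorem \ref{thm:diagonalizable-fibers-s-diag}.
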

	
	\begin{proof}
		By Theorem \ref{thm:castaign} we have a sequence of measurable functions $f_j:X\to Y$, $j\in\N$ such that 
		$F(x)=\overline{\{f_j(x)\,:\,j\in\N\}}$ for every $x\in X$. We  construct the functions $g_j$, $j\in\N$ inductively.
		Choose $z_0 \notin K$ such that $z_0+j\notin K$ for every $j\in\N$. 
		
		Let $g_1:=f_1$. Now, consider the set $E_2:=\{x\in X\,:\,f_2(x)=g_1(x)\}$. Since both $f_2$ and $g_1$ are measurable functions, we have that $E_2$ is measurable. Now, define $g_2:X\to \C$ as follows,
		\begin{equation}
		g_2(x):=\begin{cases}
		f_2(x)\quad &x\notin E_2\\
		z_0 + 2 & \text{c.c.}
		\end{cases}
		\end{equation}
		It is clear that $g_2$ is a measurable bounded function and $g_1(x)\neq g_2(x)$ for every $x\in X$.
		
		Now, let $E_3:=\{x\in X\,:\,f_3(x)=g_2(x)\}\cup\{x\in X\,:\,f_3(x)=g_1(x)\}$. Again, since $f_3,g_2$ and $g_1$ are measurable functions, $E_3$ is a measurable set. Hence, we define $g_3:X\to \C$ as
		\begin{equation}
		g_3(x):=\begin{cases}
		f_3(x)\quad &x\notin E_3\\
		z_0+3 & \text{c.c.}
		\end{cases}
		\end{equation}
		Again, $g_3$ is a measurable bounded function and $g_3(x)\neq g_2(x) \neq g_1(x)$ for every $x\in X$.
		
		Proceeding this way, in countable steps we obtain a sequence of measurable bounded functions $g_j:X\to \C$, $j\in\N$ such that $g_j(x)\neq g_{j'}(x)$ for $j\neq j'$ and for every $x\in X$. Moreover, it is clear that, by construction, $\{f_j(x)\,:\,j\in\N\}\subset \{g_j(x)\,:\,j\in\N\}$ for every $x\in X$ and so $F(x)\subset \overline{\{g_j(x)\,:\,j\in\N\}}$ for every $x\in X$.
	\end{proof}

	Now, we are ready to state and prove the following theorem.

	\begin{theo}\label{thm:diagonalizable-fibers-s-diag}
		Let $V\subseteq L^2(\RR)$ be a $\ZZ$-invariant space with range function $\J$. Let $L:V\to V$ be a bounded, normal and $\ZZ$-preserving operator with range operator $O:\J\to\J$. Suppose that $O(\w)$ is diagonalizable and all its eigenvalues are isolated points of $\sigma(O(\w))$ for a.e. $\w\in\Omega$. Then, $L$ is $\ZZ$-diagonalizable.
	\end{theo}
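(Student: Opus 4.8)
The plan is to manufacture a countable family $\{a_j\}_{j\in I}\subseteq\ell_2(\ZZ)$ of bounded-spectrum sequences such that the $\ZZ$-eigenspaces $V_{a_j}=\ker(L-\Lambda_{a_j})$ are mutually orthogonal and sum to $V$; concretely, this amounts to exhibiting a single measurable enumeration $\{g_j\}$ of the fiberwise eigenvalues of the range operator $O$. Through the fiberization map $\T$ the whole statement reduces to the fiberwise identity $\J(\w)=\underset{j\in I}{\bigpoplus}\ker(O(\w)-g_j(\w)\mathcal I_{\w})$ for a.e.\ $\w\in\Omega$: once this is known, Helson's theorem (Theorem \ref{Th:Helson}) together with the isometry of $\T$ (orthogonality of $V_{a_j}$'s and equality of the closed sum with $V$ being read off the a.e.\ orthogonality and sum of the range functions) yields $V=\underset{j\in I}{\bigpoplus}V_{a_j}$, which is exactly the claimed $\ZZ$-diagonalization.

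The first and crucial step is the measurable selection of the eigenvalues. I would set $K:=\sigma(L)$, which is compact since $L$ is bounded, and recall from Theorem \ref{thm:measurable-spectrum} that $F(\w):=\sigma(O(\w))$ is a measurable set-valued map $\Omega\rightsquigarrow\C$ with non-empty compact values contained in $K$ for a.e.\ $\w$ (after discarding a null set I may assume this holds for every $\w$). Castaign's Selection Theorem (Theorem \ref{thm:castaign}) then provides measurable selections $f_j$ of $F$ with $\ol{\{f_j(\w):j\in\N\}}=\sigma(O(\w))$ for every $\w$; running the separation construction in the proof of Lemma \ref{lem:castaign-no-multiplicity}, with the auxiliary points $z_0+j$ chosen outside $K$, turns these into measurable bounded functions $g_j:\Omega\to\C$ that are pairwise distinct at every point and satisfy $\{g_j(\w):j\in\N\}\cap K=\{f_j(\w):j\in\N\}$, hence $\sigma(O(\w))=\ol{\{g_j(\w):j\in\N\}\cap K}$ for every $\w$.

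Next I would build the operators and verify the fiberwise identity. For each $j$ pick $a_j\in\ell_2(\ZZ)$ of bounded spectrum with $\wh{a}_j=g_j$ (possible as $g_j\in L^\infty(\Omega)\subseteq L^2(\Omega)$); then, using Definition \ref{def:Lambda_a} and linearity of the correspondence between $\ZZ$-preserving operators and range operators, $L-\Lambda_{a_j}:V\to V$ is $\ZZ$-preserving with range operator $O(\w)-g_j(\w)\mathcal I_{\w}$, so by item \ref{ker} of Theorem \ref{thm:pointwise} the space $V_{a_j}:=\ker(L-\Lambda_{a_j})$ is $\ZZ$-invariant with range function $\J_{a_j}(\w)=\ker\big(O(\w)-g_j(\w)\mathcal I_{\w}\big)$. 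Set $I:=\{j\in\N:V_{a_j}\neq\{0\}\}$, at most countable, so that $\Lambda_{a_j}$ is a $\ZZ$-eigenvalue of $L$ for every $j\in I$. Now fix $\w$ in the full-measure set where $O(\w)$ is diagonalizable (hence normal) with all eigenvalues isolated in $\sigma(O(\w))$, and where the above holds: diagonalizability gives $\J(\w)=\underset{\lambda\in\sigma_p(O(\w))}{\bigpoplus}\ker(O(\w)-\lambda\mathcal I_{\w})$ with mutually orthogonal eigenspaces; if $\lambda\in\sigma_p(O(\w))$, then $\lambda$ is isolated in $\sigma(O(\w))=\ol{\{g_j(\w)\}\cap K}$, so a small neighbourhood of $\lambda$ meeting $\sigma(O(\w))$ only at $\lambda$ must contain some $g_j(\w)\in K\subseteq\sigma(O(\w))$, forcing $g_j(\w)=\lambda$ and $j\in I$ (the corresponding eigenspace being nonzero); conversely each $\J_{a_j}(\w)$ with $j\in I$ is either $\{0\}$ or an eigenspace of $O(\w)$, and the family $\{\J_{a_j}(\w)\}_{j\in I}$ is pairwise orthogonal since the $g_j(\w)$ are distinct and $O(\w)$ is normal. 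Matching the two orthogonal decompositions gives the fiberwise identity, completing the argument.

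I expect the main obstacle to be exactly the measurable selection step; the rest is bookkeeping with the dictionary of Subsection \ref{sec:SP}. The isolated-eigenvalue hypothesis is what makes the selection usable: it upgrades ``the eigenvalues of $O(\w)$ lie in $\ol{\{g_j(\w)\}}$'', which is all Castaign by itself delivers, to ``each eigenvalue of $O(\w)$ equals some $g_j(\w)$'', so that the eigenspaces carved out by the $g_j$ are precisely the range functions $\J_{a_j}$ and the decomposition of $\J(\w)$ can be recovered; without it the spaces $V_{a_j}$ need not span $V$.
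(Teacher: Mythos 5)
Your proposal is correct and takes essentially the same route as the paper: Theorem \ref{thm:measurable-spectrum} plus Lemma \ref{lem:castaign-no-multiplicity} to produce pairwise-distinct, bounded, measurable selections $g_j$, the isolated-eigenvalue hypothesis to upgrade $\sigma_p(O(\w))\subseteq\overline{\{g_j(\w):j\in\N\}}$ to actual membership in $\{g_j(\w):j\in\N\}$, and then the fiberwise eigenspace decomposition of $\J(\w)$ lifted back through $\Lambda_{a_j}$ with $\wh{a}_j=g_j$, exactly as in the paper's argument (with your extra bookkeeping on how the fiberwise identity yields $V=\underset{j}{\bigpoplus}V_{a_j}$ made explicit rather than left implicit). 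The only blemish is the written inclusion ``$g_j(\w)\in K\subseteq\sigma(O(\w))$'', which is backwards; what you mean, and what your own identity $\{g_j(\w):j\in\N\}\cap K=\{f_j(\w):j\in\N\}$ already gives, is that a $g_j(\w)$ lying in $K$ is a Castaign selection value and hence lies in $\sigma(O(\w))$.
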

	
	\begin{proof}
		
			By Theorem \ref{thm:measurable-spectrum} and Lemma \ref{lem:castaign-no-multiplicity}, there exists a sequence of measurable and bounded functions $g_j:\Omega\to\C$, $j\in \N$ such that  $g_j(\w)\neq g_{j}'(\w)$ for $j\neq j'$ and $\sigma(O(\w))\subseteq \overline{\{g_j(\w)\,:\,j\in \N\}}$ for a.e. $\w\in\Omega$. Furthermore, since all the eigenvalues of $O(\w)$ are isolated points of $\sigma(O(\w))$ for a.e. $\w\in\Omega$, then
			\begin{equation}
			\sigma_p(O(\w))\subset\{g_j(\w)\,:\,j\in \N\}
			\end{equation}
			for a.e. $\w\in\Omega$. 
			
			Since $O(\w)$ is diagonalizable for a.e. $\w\in\Omega$, the following equality holds
			\begin{equation}\label{eq:J-decomp-infinite dim}
			\J(\w)=\underset{j\in\N}{\bigpoplus} \ker\left(O(\w)-g_j(\w)\mathcal I_{\w}\right)
			\end{equation}
			where the sum is orthogonal. Notice that $\ker\left(O(\w)-g_j(\w)\mathcal I_{\w}\right)$ could be $\{0\}$ for some $j$ and some set of  positive measure. However, we discard all the functions $g_j$ such that $\ker\left(O(\w)-g_j(\w)\mathcal I_{\w}\right)=\{0\}$ almost everywhere.
			
			Now, since $g_j$ is measurable and bounded, there exists a sequence of bounded spectrum $a_j\in\ell_2(\ZZ)$ such that $\wh{a}_j = g_j$. Then, $\Lambda_{a_j}$ is a $\ZZ$-eigenvalue of $L$ for all $j$ and by \eqref{eq:J-decomp-infinite dim} we get the orthogonal decomposition $$V=\underset{j}{\bigpoplus} V_{a_j}.$$

	\end{proof}

In what follows we discuss two examples of operators satisfying that $O(\w)$ is diagonalizable and $\sigma_p(O(\w))$ are all isolated points of $\sigma(O(\w))$ for a.e. $\w\in \Omega$ and hence $\ZZ$-diagonalizable. As a first example, we give the following case.

	\begin{exa}\label{ex:compact}
	 Let $L:V\to V$ be a bounded, normal, injective and $\ZZ$-preserving operator such that $O(\w)$ is compact for a.e. $\w\in\Omega$.  For this case, by Theorem \ref{thm:pointwise}, $O(\w)$ is normal and injective for a.e. $\w\in\Omega$. Hence, $O(\w)$ is diagonalizable and its eigenvalues are all isolated points of $\sigma(O(\w))$ for a.e. $\w\in\Omega$, and thus, by Theorem \ref{thm:diagonalizable-fibers-s-diag}, $L$ is $\ZZ$-diagonalizable.
	\end{exa}	

	Notice that $O(\w)$ being compact a.e. $\w\in\Omega$ does not imply that $L$ is compact, this can be seen as a consequence of the following two results. 
	
	\begin{prop}\label{thm:spectrum-finite-mult}
		Let $L:V\rightarrow V$ be a bounded $\ZZ$-preserving operator. Then, $L$ does not have eigenvalues with finite multiplicity. 
	\end{prop}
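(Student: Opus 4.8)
The plan is to argue by contradiction, exploiting the structure of $\ZZ$-eigenspaces together with the fact that $\ZZ$-invariant subspaces whose range function has positive dimension on a set of positive measure are automatically infinite-dimensional. Suppose $\lambda\in\C$ is an eigenvalue of $L$ with eigenspace $W=\ker(L-\lambda I)\neq\{0\}$. First I would observe that $W$ is $\ZZ$-invariant: if $f\in W$ and $k\in\ZZ$, then $LT_kf = T_kLf = T_k(\lambda f)=\lambda T_k f$, using that $L$ is $\ZZ$-preserving, so $T_kf\in W$. Hence by Theorem \ref{Th:Helson} the space $W$ carries a measurable range function $\J_W$, and since $W\neq\{0\}$ the spectrum $\Sigma(W)=\{\w\in\Omega:\dim\J_W(\w)>0\}$ has positive measure.

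The key point is then that any nonzero $\ZZ$-invariant space is infinite-dimensional. Indeed, pick any $f\in W$ with $f\neq 0$; then $S(f)\subseteq W$, and the closed linear span of $\{T_kf:k\in\ZZ\}$ cannot be finite-dimensional unless $f=0$. One clean way to see this: if $\dim S(f)=N<\infty$, then the orthogonality relations forcing the fibers $\T[T_kf](\w)=e^{-2\pi i\w\cdot k}\T[f](\w)$ (by \eqref{eq:Tau-intertwining}) to lie in the $N$-dimensional space $\J_{S(f)}(\w)$ would, after integrating, show that the functions $\w\mapsto e^{-2\pi i\w\cdot k}$ span a finite-dimensional subspace of $L^2(\Sigma(S(f)),\J(\w))$; but the characters $\{e^{-2\pi i\w\cdot k}\}_{k\in\ZZ}$ are linearly independent over any set of positive measure in $\Omega$ (a Borel section of $\wh\RR/\ZZdual$), which forces $\T[f](\w)=0$ a.e., i.e. $f=0$, a contradiction. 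Therefore $W\supseteq S(f)$ is infinite-dimensional, so $\lambda$ has infinite multiplicity.

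I would expect the main obstacle to be making the ``characters are linearly independent'' step fully rigorous in the LCA setting, since one must work with $\ell_2(\ZZdual)$-valued rather than scalar functions and use that $\Omega$ is a Borel section of $\wh\RR/\ZZdual$; the cleanest route is probably to reduce to the scalar case by pairing against a fixed vector $v\in\J(\w_0)$ on a positive-measure subset where $v$ lies in the fiber, or to invoke directly that $\{T_kf\}_{k\in\ZZ}$ spanning a finite-dimensional space would make $f$ a finite linear combination of its own translates, contradicting injectivity of the map $k\mapsto e^{-2\pi i\w\cdot k}$ on $L^2(\Omega)$. Alternatively, and perhaps most economically, one can cite Lemma \ref{lem:An}: on the set $A_n$ the fibers of any generating family are orthonormal and the characters restricted to $A_n$ are still independent, so a nonzero $\ZZ$-invariant subspace sitting inside $W$ has infinitely many linearly independent translates. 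Either way, the conclusion is that no eigenvalue of a $\ZZ$-preserving operator can have finite multiplicity.
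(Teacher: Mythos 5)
Your skeleton is exactly the paper's proof: the eigenspace $E_\lambda=\ker(L-\lambda\mathcal I)$ is $\ZZ$-invariant because $L$ commutes with every $T_k$, and one concludes because no nonzero $\ZZ$-invariant subspace of $L^2(\RR)$ is finite-dimensional (a fact the paper simply invokes). The gap is in how you justify that fact. Your key claim, that the characters $\w\mapsto e^{-2\pi i\w\cdot k}$, $k\in\ZZ$, are linearly independent over \emph{every} subset of $\Omega$ of positive measure, is true for $\RR=\R^d$, $\ZZ=\Z^d$ (a nonzero trigonometric polynomial is real-analytic, so its zero set is null), but it fails in the generality of this paper, where $\ZZ$ may have torsion. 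For instance, take $\RR=\R\times\Z/2\Z$ and the uniform lattice $\ZZ=\Z\times\Z/2\Z$; identifying $\Omega$ with $[0,1)\times\Z/2\Z$, the combination of the characters corresponding to $k=(0,0)$ and $k=(0,1)$, namely $1+(-1)^{\epsilon}$, vanishes on $\{(\theta,\epsilon)\,:\,\epsilon=1\}$, a set of measure $1/2$. Both of your fallback routes (pairing against a fixed fiber vector, or restricting to the sets $A_n$ of Lemma \ref{lem:An}) rest on the same independence-on-positive-measure-sets claim, so they do not repair the argument. (Note also that the fact itself needs $\ZZ$ infinite; this is implicit in the paper's setting.)

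The fact you need has a short proof that avoids pointwise independence of characters altogether. If $f\neq 0$, then by Theorem \ref{Th:Helson} one has $\T[S(f)]=\mathcal M_{\J}$ with $\J(\w)=\vsp\{\T[f](\w)\}$, and $\mathcal M_{\J}$ is multiplicative-invariant (see the discussion after \eqref{eq:MJ}); in particular $\chi_E\,\T[f]\in\T[S(f)]$ for every measurable $E\subseteq\Omega$. The support $\Sigma(S(f))$ has positive measure, and since $\ZZ$ is infinite the compact group $\wh{\RR}/\ZZdual\cong\wh{\ZZ}$ is infinite, so its Haar measure is non-atomic and $\Sigma(S(f))$ can be split into countably many disjoint sets $E_n$ of positive measure. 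The functions $\chi_{E_n}\T[f]$ are nonzero and mutually orthogonal, so $S(f)$, and hence any nonzero $\ZZ$-invariant space, is infinite-dimensional. With this substitution your argument is complete and coincides with the paper's.
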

	\begin{proof}
		Suppose there is an eigenvalue $\lambda$ of finite multiplicity. Then, $E_\lambda = \ker(L-\lambda\mathcal I)\neq \{0\}$. Since $E_\lambda$ is a $\ZZ$-invariant space and  the only finite dimensional $\ZZ$-invariant space is the zero space, we obtain a contradiction.
	\end{proof}

	%As an immediate consequence, we have the following corollary.
	
	\begin{cor}\label{coro:compact-sp}
		Let $L:V\to V$ be a bounded $\ZZ$-preserving operator. Then, $L$ is compact if and only if $L=0$.
	\end{cor}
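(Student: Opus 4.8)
The claim has a trivial direction: the zero operator is compact, so $L=0$ implies $L$ compact. The plan is therefore to establish the converse, that a compact $\ZZ$-preserving operator $L:V\to V$ must vanish, and I would do this by contradiction.

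Assume $L$ is compact and $L\neq 0$. One cannot argue directly with the eigenvalues of $L$, since a compact operator need not have any nonzero eigenvalue (it may be quasinilpotent). To circumvent this, I would pass to the operator $L^*L:V\to V$. By Theorem \ref{thm:pointwise}, the adjoint $L^*$ is again $\ZZ$-preserving, so $L^*L\,T_k=L^*T_kL=T_kL^*L$ for every $k\in\ZZ$; hence $L^*L$ is a bounded $\ZZ$-preserving operator. It is also self-adjoint and positive, and it is compact, being the composition of the compact operator $L$ with the bounded operator $L^*$. Since $L\neq 0$, we have $\|L^*L\|=\|L\|^2>0$, so $L^*L$ is a \emph{nonzero} compact positive self-adjoint operator.

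Now I would invoke the spectral theorem for compact self-adjoint operators: a nonzero compact positive self-adjoint operator has a nonzero eigenvalue; in fact $\mu:=\|L^*L\|>0$ is one. Consider its eigenspace $E_\mu=\ker(L^*L-\mu\,\mathcal I)\neq\{0\}$. By the Riesz theory of compact operators, $E_\mu$ is finite-dimensional because $\mu\neq 0$. On the other hand $\mu$ is an eigenvalue of the $\ZZ$-preserving operator $L^*L$, so Proposition \ref{thm:spectrum-finite-mult} (applied with $L^*L$ in place of $L$) forbids it from having finite multiplicity. This contradiction forces $L=0$.

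The argument is short, and the only genuinely non-mechanical step is the decision to replace $L$ by $L^*L$: compactness of $L$ alone is not enough to produce a nonzero eigenvalue, whereas compactness of the positive operator $L^*L$ is. The remaining points — that $L^*L$ inherits $\ZZ$-preservation, self-adjointness, positivity and compactness, that it is nonzero precisely when $L$ is, and that its nonzero eigenvalue has a finite-dimensional eigenspace — are all standard and require only Theorem \ref{thm:pointwise} and classical compact-operator theory. I do not anticipate a real obstacle.
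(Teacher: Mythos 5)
Your proof is correct and follows essentially the same route as the paper: both pass to the compact, self-adjoint (normal), $\ZZ$-preserving operator $L^*L$ and combine the finite multiplicity of nonzero eigenvalues of compact operators with Proposition \ref{thm:spectrum-finite-mult}. The only cosmetic difference is that you extract the eigenvalue $\|L^*L\|>0$ directly for a contradiction, while the paper uses diagonalizability of the compact normal operator to conclude all eigenvalues vanish and hence $L^*L=0$.
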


	\begin{proof}
		If $L$ is compact,  $L^*L$ is bounded, normal, compact and $\ZZ$-preserving . Hence, $L^*L$ is diagonalizable. Moreover, by compactness, every eigenvalue $\lambda\neq 0$ of $L^*L$ should be of finite multiplicity. By Proposition \ref{thm:spectrum-finite-mult}, we deduce that $\lambda=0$ is the only possible eigenvalue. Hence $L^*L=0$ and so $L=0$.
	\end{proof}
	
	For instance, let $V$ be a finitely generated $\ZZ$-invariant space with range function $\J$ and take any bounded $\ZZ$-preserving operator $L\neq 0$ acting on $V$.  Then, $O(\w):\J(\w)\rightarrow \J(\w)$ is compact since $\dim \J(\w)<\infty$ for a.e. $\w\in\Omega$ but $L$ is not.

%	\begin{rem}
%		Notice that $O(\w)$ being compact a.e. $\w\in\Omega$ does not imply that $L$ is compact, this can be seen as a consequence of Corollary \ref{coro:compact-sp}. For instance, let $V$ be a finitely generated $\ZZ$-invariant space with range function $\J$ and take any bounded $\ZZ$-preserving operator $L\neq 0$ acting on $V$.  Then, $O(\w):\J(\w)\rightarrow \J(\w)$ is compact since $\dim \J(\w)<\infty$ for a.e. $\w\in\Omega$ but $L$ is not.
%	\end{rem}

	\begin{rem}
		Let $L$ be a bounded, normal and $\ZZ$-preserving operator such that $O(\w)$ is compact for a.e. $\w\in\Omega$ but not necessarily injective. Given $V':= \ker(L)^\perp$ the orthogonal complement of $\ker(L)$ in $V$, we have that $V'$ is a $\ZZ$-invariant space which is also invariant by $L$. Thus, define $L':=L|_{V'}:V'\to V'$, then $L'$ is an operator satisfying the same properties as in Example \ref{ex:compact}. Hence, $L'$ is $\ZZ$-diagonalizable on $V'$. Given a $\ZZ$-diagonalization $\{a_j\}_{j\in I}$ of $L':V'\to V'$, we can decompose $V$ as follows
		$$V=\ker(L)\poplus \underset{j}{\bigpoplus} V'_{a_j}.$$
		where $V'_{a_j}$ are $\ZZ$-eigenspaces of $L'$.
	\end{rem}
	
	We now give some sufficient conditions for a $\ZZ$-preserving operator to admit compact range operator a.e. $\w\in\Omega$.
	
	\begin{prop}\label{prop:O(w)-compact}
		Let $V$ be a $\ZZ$-invariant space with range function $\J$ and $L:V\to V$ a bounded $\ZZ$-preserving operator with range operator $O$. 
		\begin{enumerate}
			\item If $V'=\overline{L(V)}$ is a $\ZZ$-invariant space satisfying that $\dim \J_{V'}(\w)<\infty$ for a.e. $\w\in\Omega$, then $O(\w)$ is of finite rank a.e. $\w\in\Omega$. We will call these operators of finite range rank.
			\item\label{prop:O(w)-compact-2} If there exists a sequence $\{L_n\}_{n\in\N}$ such that $L_n:V\to V$ is a bounded $\ZZ$-preserving operator of finite range rank for every $n\in\N$, and $L_n\to L$ when $n\to \infty$ uniformly, then $O(\w)$ is compact for a.e. $\w\in\Omega$.
		\end{enumerate}
	\end{prop}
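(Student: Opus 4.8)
The plan is to treat the two items separately, both as direct consequences of the structural dictionary in Theorem \ref{thm:pointwise}.

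For item $(1)$, I would start from Theorem \ref{thm:pointwise}\eqref{rank}, which already tells us that $V'=\overline{L(V)}$ is automatically $\ZZ$-invariant with range function $\J_{V'}(\w)=\overline{O(\w)\J(\w)}$ for a.e.\ $\w\in\Omega$. The hypothesis $\dim\J_{V'}(\w)<\infty$ a.e.\ then says precisely that $\overline{O(\w)\J(\w)}$ is finite-dimensional a.e. Since finite-dimensional subspaces of $\ell_2(\ZZdual)$ are closed, $O(\w)\J(\w)=\overline{O(\w)\J(\w)}=\J_{V'}(\w)$, so $\operatorname{rank}O(\w)=\dim\J_{V'}(\w)<\infty$ for a.e.\ $\w\in\Omega$, which is exactly the claim; by definition such $L$ are the operators of finite range rank.

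For item $(2)$, the first observation is that the correspondence $L\mapsto O$ between bounded $\ZZ$-preserving operators $V\to V$ and (equivalence classes of) measurable range operators on $\J$ is linear, so the range operator of $L_n-L:V\to V$ is $\w\mapsto O_n(\w)-O(\w)$ a.e., where $O_n$ denotes the range operator of $L_n$. Applying Theorem \ref{thm:pointwise}\eqref{op-norm} to $L_n-L$ gives
$$\esssup_{\w\in\Omega}\|O_n(\w)-O(\w)\|_{\text{op}}=\|L_n-L\|_{\text{op}}\longrightarrow 0\quad (n\to\infty).$$
For each $n$ pick a null set $N_n\subseteq\Omega$ outside of which $\|O_n(\w)-O(\w)\|_{\text{op}}\le\|L_n-L\|_{\text{op}}$ and $O_n(\w)$ has finite rank (the latter using that $L_n$ is of finite range rank). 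Then $N:=\bigcup_{n\in\N}N_n$ is null, and for every $\w\in\Omega\setminus N$ we have that $O_n(\w)$ is compact for all $n$ and $O_n(\w)\to O(\w)$ in operator norm. Since the compact operators on the Hilbert space $\J(\w)$ form a norm-closed subspace of $\B(\J(\w))$, it follows that $O(\w)$ is compact for every $\w\in\Omega\setminus N$, i.e.\ for a.e.\ $\w\in\Omega$.

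The computations are all routine; the only step requiring a little care is the passage from the essential-supremum estimate to pointwise-a.e.\ norm convergence of the fibers $O_n(\w)\to O(\w)$, which is why the common null set $N$ must be assembled as a countable union over $n$ before invoking closedness of the compacts fiberwise. I expect this bookkeeping, rather than any genuine difficulty, to be the main point to get right.
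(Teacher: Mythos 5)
Your proof is correct and follows essentially the same route as the paper: item (1) via Theorem \ref{thm:pointwise}\eqref{rank}, and item (2) via linearity of the correspondence $L\mapsto O$ together with Theorem \ref{thm:pointwise}\eqref{op-norm} and norm-closedness of the compact operators on each fiber. Your explicit handling of the countable union of null sets simply fills in bookkeeping that the paper's proof leaves implicit.
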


	\begin{proof}
		$(1)$ Recall that by item \eqref{rank} in Theorem \ref{thm:pointwise} the range function associated to  $\overline{L(V)}$ is the one given by $\overline{O(\w)\J(\w)}$ for a.e. $\w\in\Omega$. Thus, $O(\w)$ is a finite rank operator for a.e. $\w\in\Omega$.
		
		$(2)$ Let $O_n$ be the range operator associated to each $L_n$ for every $n\in \N$, then by \eqref{op-norm} in Theorem \ref{thm:pointwise} we have that $O_n(\w)\to O(\w)$ when $n\to \infty$ for a.e. $\w\in\Omega$. However using  $(1)$ of this proposition we have that $O_n(\w)$ is of finite rank for every $n\in\N$, thus $O(\w)$ is compact for a.e. $\w\in\Omega$.
	\end{proof}
	
	For the second example we need to give the following definition.
	
	\begin{defi}
		Let $\Hil$ be a separable Hilbert space, $A:\Hil\to\Hil$ a normal bounded operator, $I$ a finite index set and $\{f_i\}_{i\in I}\subset\Hil$. We say that $(\Hil, A,\{f_i\}_{i\in I})$ is a DS-triple if $\{A^nf_i\,:\,n\in \N, i\in I\}$ is a frame of $\Hil$. In that case, we say that $A$ admits a DS-triple.
	\end{defi}

	The problem of finding conditions on $\Hil$, $A$ and $\{f_i\}_{i\in I}$ under which $(\Hil, A, \{f_i\}_{i\in I})$ is a DS-triple has been well studied and is of special interest in the context of dynamical sampling theory, see \cite{ADK13,ADK15,ACCMP2017,ACUS2017,CMPP2019}. The following result has been proved in \cite{CMPP2019}.
	
	\begin{theo}[\cite{CMPP2019}]\label{thm:DS-diagonalizable}
		Let $\Hil$ be an infinite-dimensional separable Hilbert space,  $A:\Hil\to\Hil$ a bounded normal operator and $\{f_i\}_{i\in I}\subset\Hil$ with $I$ a finite index set. If $(\Hil, A, \{f_i\}_{i\in I})$ is a DS-triple, then $A$ is diagonalizable and $\sigma_p(A)\subset \mathbb D$, where $\mathbb D=\{\lambda\in\C\,:\, |\lambda|<1\}$. Moreover, in this case, the dimension of each eigenspace is less than or equal to $\#I$ and the cluster points of $\sigma_p(A)$ are contained in $S_1=\{\lambda\in\C\,:\, |\lambda|=1\}$.
	\end{theo}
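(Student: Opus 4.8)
The plan is to read off all four conclusions directly from the two frame inequalities $\alpha\|x\|^{2}\le\sum_{n\in\N,\,i\in I}|\langle x,A^{n}f_{i}\rangle|^{2}\le\beta\|x\|^{2}$, $x\in\Hil$, by probing them with vectors manufactured from the spectral resolution of the normal operator $A$. Write $E$ for the spectral measure of $A$ and, for $x\in\Hil$, set $\mu_{x}=\langle E(\cdot)x,x\rangle$, so that $\|A^{n}x\|^{2}=\int|\lambda|^{2n}\,d\mu_{x}(\lambda)$ and $\langle x,A^{n}f_{i}\rangle=\langle (A^{*})^{n}x,f_{i}\rangle$. Two facts will be used repeatedly: every spectral projection $E(B)$ commutes with $A$, so that $\{E(B)A^{n}f_{i}\}=\{A^{n}E(B)f_{i}\}$ is again a frame (or Bessel sequence) on the reducing subspace $E(B)\Hil$ with the same bounds; and completeness of the original family forces $E(B)f_{i}\neq0$ for some $i$ whenever $E(B)\neq0$.

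First I would show $\sigma(A)\subseteq\overline{\mathbb D}$: if $P=E(\{|\lambda|\ge1+\varepsilon\})\neq0$ for some $\varepsilon>0$, then $A$ expands by at least $1+\varepsilon$ on $P\Hil$, so evaluating the upper frame bound at the unit vectors $A^{m}(Pf_{i_{0}})/\|A^{m}(Pf_{i_{0}})\|$ (for an index $i_{0}$ with $Pf_{i_{0}}\neq0$) gives $\beta\ge\|A^{m}(Pf_{i_{0}})\|^{2}\ge(1+\varepsilon)^{2m}\|Pf_{i_{0}}\|^{2}\to\infty$, a contradiction. Next, for an eigenvalue $\lambda$ with eigenprojection $Q=E(\{\lambda\})$, the identity $A^{n}(Qf_{i})=\lambda^{n}Qf_{i}$ shows that $\overline{\mathrm{span}}\{QA^{n}f_{i}:n\in\N,\,i\in I\}$ is contained in $\mathrm{span}\{Qf_{i}:i\in I\}$; since this closed span equals $Q\Hil=\ker(A-\lambda)$, we get $\dim\ker(A-\lambda)\le\#I$. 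If moreover $|\lambda|=1$, then $|\langle x,A^{n}f_{i}\rangle|=|\langle x,Qf_{i}\rangle|$ is independent of $n$, so finiteness of the Bessel sum over the infinite index set $\N\times I$ forces $Qf_{i}\perp\ker(A-\lambda)$ for every $i$; letting $x$ range over $\ker(A-\lambda)$ gives $Qf_{i}=0$ for all $i$, hence the family is orthogonal to $\ker(A-\lambda)$, which by completeness is impossible unless $\ker(A-\lambda)=\{0\}$. Combined with the previous step this yields $\sigma_{p}(A)\subseteq\mathbb D$ together with the eigenspace bound.

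For diagonalizability I would decompose $\Hil=\Hil_{\mathrm{pp}}\poplus\Hil_{c}$, with $\Hil_{\mathrm{pp}}$ the closed span of all eigenvectors of $A$ and $\Hil_{c}=\Hil_{\mathrm{pp}}^{\perp}$; both are reducing, $A|_{\Hil_{c}}$ is normal with no eigenvalues, and pushing the frame through the commuting projection $P_{c}$ produces a frame $\{A^{n}g_{i}\}$ of $\Hil_{c}$, $g_{i}=P_{c}f_{i}$, with bounds $\alpha,\beta$. The target is $\Hil_{c}=\{0\}$, and I expect the crucial and most delicate step to be the proof that $\sigma(A|_{\Hil_{c}})\subseteq S_{1}$: for a hypothetical $z_{0}\in\sigma(A|_{\Hil_{c}})$ with $|z_{0}|<1$, the projection $P_{\delta}=E(\{|\lambda-z_{0}|<\delta\})|_{\Hil_{c}}$ is nonzero for every $\delta>0$, while $A^{*}$ contracts by $|z_{0}|+\delta$ on $P_{\delta}\Hil_{c}$; using $\langle x,A^{n}g_{i}\rangle=\langle (A^{*})^{n}x,P_{\delta}g_{i}\rangle$ for $x\in P_{\delta}\Hil_{c}$ and the lower frame bound one gets $\alpha\le\big(1-(|z_{0}|+\delta)^{2}\big)^{-1}\sum_{i}\|P_{\delta}g_{i}\|^{2}$, but the right-hand side tends to $0$ as $\delta\to0$ because $\|P_{\delta}g_{i}\|^{2}=\mu_{g_{i}}(\{|\lambda-z_{0}|<\delta\})\to\mu_{g_{i}}(\{z_{0}\})=0$ (no eigenvalues on $\Hil_{c}$) while the denominator stays bounded below — a contradiction. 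Once $\sigma(A|_{\Hil_{c}})\subseteq S_{1}$, the operator $A|_{\Hil_{c}}$ is normal with spectrum on the circle, hence unitary, so every power $A^{k}$ is unitary on $\Hil_{c}$ and $\{A^{m}g_{i}:m\ge k,\,i\in I\}=A^{k}\{A^{n}g_{i}:n\in\N,\,i\in I\}$ is a frame of $\Hil_{c}$ with bounds $\alpha,\beta$; thus $\alpha\|x\|^{2}\le\sum_{i}\sum_{m\ge k}|\langle x,A^{m}g_{i}\rangle|^{2}$ for all $k$, and since the right-hand side is a tail of the convergent series $\sum_{i,n}|\langle x,A^{n}g_{i}\rangle|^{2}\le\beta\|x\|^{2}$ it tends to $0$, forcing $x=0$. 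Hence $\Hil_{c}=\{0\}$, $\Hil=\Hil_{\mathrm{pp}}$, and $A$ is diagonalizable.

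Finally, to show that the accumulation points of $\sigma_{p}(A)$ lie in $S_{1}$, I would argue by contradiction: if distinct eigenvalues $\lambda_{j}$ converge to $\lambda_{*}\in\mathbb D$ with eigenprojections $Q_{j}$, then their pairwise orthogonality gives $\sum_{j}\|Q_{j}f_{i}\|^{2}\le\|f_{i}\|^{2}$ for each $i$, whence $\sum_{i}\|Q_{j}f_{i}\|^{2}\to0$; on the other hand, evaluating the lower frame bound at a unit vector $x\in\ker(A-\lambda_{j})$, where $|\langle x,A^{n}f_{i}\rangle|=|\lambda_{j}|^{n}|\langle x,Q_{j}f_{i}\rangle|$, yields $\sum_{i}\|Q_{j}f_{i}\|^{2}\ge\alpha\big(1-|\lambda_{j}|^{2}\big)\to\alpha\big(1-|\lambda_{*}|^{2}\big)>0$, a contradiction. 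The routine points — interchanging sums, absolute continuity of the measures $\mu_{g_{i}}$, and the elementary expansion/contraction estimates on spectral subspaces — should be painless; the one genuinely delicate maneuver is the $\delta\to0$ limit confining the continuous spectrum to the unit circle, which must play the absence of eigenvalues, the contraction estimate for $A^{*}$, and the lower frame bound against one another at once.
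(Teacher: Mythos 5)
Your argument is correct, but note that this theorem is not proved in the paper at all: it is imported verbatim from the reference [CMPP2019] (Cabrelli--Molter--Paternostro--Philipp), so there is no internal proof to compare yours against. Checking your proposal on its own merits: the compression trick (spectral projections commute with $A$, so $\{E(B)A^nf_i\}=\{A^nE(B)f_i\}$ is a frame of $E(B)\Hil$ with the same bounds) correctly yields $\sigma(A)\subseteq\overline{\mathbb{D}}$, the eigenspace bound $\dim\ker(A-\lambda)\le\#I$, and the exclusion of unimodular eigenvalues via the divergence of the constant-in-$n$ Bessel sum; the confinement of $\sigma(A|_{\Hil_c})$ to the unit circle by the $\delta\to 0$ limit (using that point masses of the spectral measure of $A|_{\Hil_c}$ vanish precisely because $A|_{\Hil_c}$ has no eigenvalues) is sound; and the final tail argument, using that $A|_{\Hil_c}$ is then unitary so each shifted family $\{A^{n+k}g_i\}$ is again a frame with lower bound $\alpha$ while being a tail of a convergent Bessel series, correctly forces $\Hil_c=\{0\}$, hence diagonalizability; the accumulation-point argument via orthogonality of the eigenprojections $Q_j$ and the estimate $\sum_i\|Q_jf_i\|^2\ge\alpha(1-|\lambda_j|^2)$ is also correct. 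This is essentially the same spectral-theoretic style of argument used in the dynamical sampling literature from which the result is quoted, so you have in effect supplied a legitimate self-contained proof of a cited result rather than an alternative to anything in this paper.
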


	\begin{rem}
		In a similar way as in Corollary \ref{coro:compact-sp}, one can also prove that if a $\ZZ$-preserving operator $L:V\to V$ admits a DS-triple $(V,L,\{f_i\}_{i\in I})$ with $I$ a finite index set, then $L=0$. However, a $\ZZ$-preserving operator $L\neq 0$ could satisfy that $O(\w)$ admits a DS-triple for a.e. $\w\in\Omega$.
	\end{rem}	 

	 Observe that a normal compact operator acting on an infinite-dimensional Hilbert space never admits a DS-triple since the only cluster point of its eigenvalues, if any, is zero. Thus, we have the following example.
	 
	 \begin{exa}
	 	Let $V$ be a $\ZZ$-invariant space with range function $\J$ such that $\dim \J(\w)=\infty$ for a.e. $\w\in\Sigma(V)$. Let $L:V\to V$ be a bounded, normal $\ZZ$-preserving operator such that $O(\w)$ is admits a DS-triple for a.e. $\w\in\Sigma(V)$. By Theorem \ref{thm:pointwise}, $O(\w)$ is normal for a.e. $\w\in\Omega$. Then, by Theorem \ref{thm:DS-diagonalizable}, $O(\w)$ is diagonalizable and its eigenvalues are all isolated points of $\sigma(O(\w))$ for a.e. $\w\in\Omega$. Then, by Theorem \ref{thm:diagonalizable-fibers-s-diag}, $L$ is $\ZZ$-diagonalizable.
	 \end{exa}

	 Finally we give a sufficient condition for a $\ZZ$-preserving operator in order to guarantee that its fibers admit a DS-triple which is an immediate consequence of Theorem \ref{thm:frame-fiber}.
	 
	 \begin{prop}
	 		Let $V$ be a $\ZZ$-invariant space with range function $\J$. Let $L:V\to V$ a bounded $\ZZ$-preserving operator with range operator $O$. Assume that there exist functions $\{f_i\}_{i\in I}$, with $I$ a finite index set, such that $\{\tr{k}L^jf_i \, : \, k \in \ZZ, j\in\N, i\in I\}$ is a frame of $V$, then $\left(\J(\w),O(\w),\{\mathcal T[f_i](\w)\}_{i\in I}\right)$ is a DS-triple for a.e. $\w\in\Omega$.
	 \end{prop}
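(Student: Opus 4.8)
The plan is to reduce the statement to a direct application of Theorem \ref{thm:frame-fiber}, the fiberwise characterization of frames of translations, after identifying the fibers of the iterates $L^{j}f_{i}$.

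First I would record the elementary fact that, since $L$ is $\ZZ$-preserving, so is each power $L^{j}$, and the range operator of $L^{j}$ is $O(\w)^{j}$ for a.e. $\w\in\Omega$. This follows by iterating the intertwining identity $\T[Lf](\w)=O(\w)\T[f](\w)$, which holds for every $f\in V$, together with the fact that $L$ maps $V$ into $V$ (so the identity may be applied repeatedly). In particular, for every $i\in I$ and every $j$,
\[
\T[L^{j}f_{i}](\w)=O(\w)^{j}\,\T[f_{i}](\w),\qquad\text{a.e. }\w\in\Omega,
\]
and note that $\T[f_{i}](\w)\in\J(\w)$ a.e. by Theorem \ref{Th:Helson}, since $f_{i}\in V$, so all the vectors above live in $\J(\w)$.

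Next I would set $\Phi:=\{L^{j}f_{i}:j\in\N,\ i\in I\}$. Because $I$ is finite, $\Phi$ is a countable subset of $V\subseteq L^{2}(\RR)$, and by hypothesis the system $\{T_{k}\varphi:k\in\ZZ,\ \varphi\in\Phi\}=\{T_{k}L^{j}f_{i}:k\in\ZZ,\ j\in\N,\ i\in I\}$ is a frame of $V$, say with bounds $A,B>0$. Theorem \ref{thm:frame-fiber} then yields that $\{\T[\varphi](\w):\varphi\in\Phi\}$ is a frame of $\J(\w)$ with uniform bounds $A,B$ for a.e. $\w\in\Omega$. By the displayed identity this fiber system is precisely $\{O(\w)^{j}\,\T[f_{i}](\w):j\in\N,\ i\in I\}$, so the latter is a frame of $\J(\w)$ for a.e. $\w$. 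Finally, when $L$ is normal, $O(\w)$ is normal for a.e. $\w\in\Omega$ by Theorem \ref{thm:pointwise}, and therefore $\bigl(\J(\w),O(\w),\{\T[f_{i}](\w)\}_{i\in I}\bigr)$ is a DS-triple for a.e. $\w\in\Omega$.

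There is essentially no serious obstacle: the whole content is carried by Theorem \ref{thm:frame-fiber}. The only points needing minor care are that $\Phi$ be countable, so that Theorem \ref{thm:frame-fiber} is applicable — this is exactly where finiteness of $I$ enters — and that the frame bounds transferred to the fibers be \emph{uniform} in $\w$, which is precisely the conclusion of Theorem \ref{thm:frame-fiber}.
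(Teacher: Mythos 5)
Your proof is correct and follows exactly the route the paper intends: the paper states the proposition as an immediate consequence of Theorem \ref{thm:frame-fiber}, which is precisely your argument of identifying $\T[L^{j}f_{i}](\w)=O(\w)^{j}\T[f_{i}](\w)$ and transferring the frame property to the fibers with uniform bounds. Your closing remark invoking normality of $L$ is the only extraneous bit (the proposition as stated does not assume it; the frame property of $\{O(\w)^{j}\T[f_{i}](\w)\}$ is the whole content), but this does not affect the validity of the argument.
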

 
 	\section{$\Gamma$-preserving operators and $\Gamma$-diagonalization}\label{sec:Gamma-inv}

    Throughout this section, we will consider a discrete and at most countable group $G$ acting on $\RR$ by the continuous automorphisms
	$x \mapsto g x \in \RR$ for $g \in G$ and $x \in \RR$.
	As before, $\Delta$ is a uniform lattice of $\RR$ and we will assume that the action of $G$ on $\RR$ preserves $\ZZ$, that is $g \ZZ = \ZZ$ for all $g \in G$. This implies in particular that the action of $G$ preserves the Haar measure of $\RR$, i.e.
	$$
	|gE| = |E| \ , \quad \forall \ E \subset \RR \textnormal{ measurable}, \quad \forall \ g \in G .
	$$
	This can be proved as follows. Since $G$ acts on $\RR$ by automorphisms, then (see e.g. \cite[Theorem 15.26]{HR1979})
	%\textcolor{red}{[Hewitt-Ross Vol 1, (15.26)]}) 
	there exists a homomorphism $\delta : G \to \R^+$ such that, for all measurable $E \subset \RR$ we have $|gE| = \delta(g)|E|$. Let $Q \subset \RR$ be a fundamental domain for $\RR/\ZZ$. Its measure $|Q|$ is finite and, since the action of $G$ preserves $\Delta$, then for all $g \in G$ the set $gQ$ is a fundamental domain, so $|gQ| = |Q|$. Thus, $\delta = 1$.
	
	The action of $G$ on $\RR$ induces an action of $G$ on $\wh{\RR}$ by duality:
	$$
	\langle g^* \xi , x \rangle := \langle \xi, g x\rangle, \quad g\in G,\,\xi\in\wh{\RR},\,x\in \RR.
	$$
	This dual action of $G$ on $\wh{\RR}$ satisfies $g_1^* g_2^* = (g_2 g_1)^*$ for all $g_1, g_2 \in G$, and it preserves $\ZZ^\bot$ and the Haar measure of $\wh{\RR}$. Moreover, it induces an action on the quotient group $\wh{\RR}/\ZZ^\bot$ by
	$$
	g^*[\xi] := [g^* \xi] \ , \quad \xi \in \wh{\RR}, \quad g \in G,
	$$
	where $[\xi]$ is the class of $\xi$ in $\wh{\RR}/\ZZ^\bot$. This implies that we can define an action of $G$ on any Borel section $\Omega \subset \wh{\RR}$ of $\wh{\RR}/\ZZ^\bot$, as follows. Let $q_\Omega : \wh{\RR} \to \Omega$ be the canonical section, that is, $q_\Omega(\xi)$ is the unique point in $[\xi] \cap \Omega$,
% 	satisfying $q_\Omega^{-1}(\omega) = \{\omega + s\}_{s \in \ZZ^\perp}$ for $\omega \in \Omega$ 
	and denote by $\nu_\Omega : \wh{\RR} \to \ZZ^\perp$ the map 
	\begin{equation}\label{eq:sigmaOmega}
	\nu_\Omega(\xi) = q_\Omega(\xi) - \xi \ , \quad \xi \in \wh{\RR}.
	\end{equation}
	Then, the maps $\{g^\sharp : \Omega \to \Omega, g \in G\}$ given by
	\begin{equation}\label{eq:gsharp}
	g^\sharp \omega = q_\Omega(g^*\omega)
	\end{equation}
	define an action, satisfying $g_1^\sharp g_2^\sharp = (g_2 g_1)^\sharp$. Indeed, we have
	\begin{align*}
	g_1^\sharp g_2^\sharp \omega & = q_\Omega(g_1^* g_2^\sharp \omega) = q_\Omega(g_1^* q_\Omega(g_2^* \omega) ) = q_\Omega(g_1^* (g_2^* \omega + \nu_\Omega(g_2^* \omega) ) )\\
	& = q_\Omega(g_1^* g_2^* \omega + g_1^*\nu_\Omega(g_2^* \omega) ) = q_\Omega(g_1^* g_2^* \omega ) = q_\Omega( (g_2 g_1)^* \omega )
	\end{align*}
	where the second to last identity is due to the fact that $g_1^*\nu_\Omega(g_2^* \omega) \in \ZZ^\perp$.
	The action (\ref{eq:gsharp}) will coincide with the dual action of $G$ on $\wh{\RR}$ only when $\Omega$ is an invariant subset of $\wh{\RR}$ for the dual action.

	Given that the action of $G$ preserves $\ZZ$, we can define the semidirect product $$\Gamma = \ZZ \rtimes G = \{(k,g) \, : \, k \in \ZZ, g \in G\},$$ with composition law
	$$
	(k,g) \cdot (k',g') = (k + gk', gg').
	$$
	The action of $\Gamma$ on $\RR$ reads
	$$
	\gamma x = gx + k \ , \quad \gamma = (k,g) \in \Gamma \, , \ x \in \RR.
	$$
		
	A motivational example for this setting is given by the crystal (or crystallographic) groups. 
	
	\begin{defi}
		A crystal group $\Gamma$ is a discrete subgroup of the isometries of $\R^d$ that has a closed and bounded Borel section $P$, that is,
		\begin{enumerate}
			\item $\bigcup_{\gamma\in\Gamma} \gamma P = \R^d$.
			\item If $\gamma \neq \gamma'$, then $|\gamma P \cap \gamma'P|=0$.
		\end{enumerate}
	\end{defi}
	
	There is a subclass of the crystal groups we are interested in.
	
	\begin{defi}\label{def:crystal-split}
		We say that a crystal group $\Gamma$ splits if it is the semidirect product $\Gamma=\ZZ\rtimes G$ of a finite group $G$ and a uniform lattice $\ZZ$ of $\R^d$.
	\end{defi}
	
	In particular, it can be seen that any crystal group can be embedded in a crystal group that splits. We refer the reader to \cite{Bie1910,Far1981,GS1987} for more general results on these groups.

	We will now define some unitary representations which play a fundamental role in what follows. 
	\begin{defi}
	We denote by 
%	$T : \ZZ \to \U(L^2(\RR))$ the unitary representation defined by 
%	$$
%	\tr{k}f(x) = f(x - k) \ , \quad f \in L^2(\RR), \,k\in \ZZ,
%	$$
%	and by 
	$R : G \to \U(L^2(\RR))$ the unitary representation defined by
	$$
	\rot{g}f(x) = f(g^{-1}x) \ , \quad f \in L^2(\RR),\, g\in G.
	$$
	\end{defi}
	Note that, since $\rot{g}\tr{k} = \tr{gk}\rot{g}$, the map $(k,g) \mapsto \tr{k}\rot{g}$ defines a unitary representation of the semidirect product group $\Gamma = \ZZ \rtimes G$ on $L^2(\RR)$. Also, for all $f \in L^2(\RR)$ 
%	and all $(k,g) \in \Gamma$, the following relations hold:
%	\begin{equation}\label{relations-tau}
%	\wh{\tr{k} f}(\xi) = e^{-2\pi i \xi . k} \wh{f}(\xi) \ , \quad
%	\wh{\rot{g} f}(\xi) = \wh{f}(g^* \xi).
%	\end{equation}
	and all $g \in G$, the following relation holds:
	\begin{equation}\label{relations-tau}
	\wh{\rot{g} f}(\xi) = \wh{f}(g^* \xi).
	\end{equation}

	\begin{defi}\label{def:rpeque}
		We denote by $t : \ZZdual \to \U(\ell_2(\ZZdual))$ the left regular representation of $\ZZdual$, that is
		$$
		t_\ell a(\ell') = a (\ell' - \ell) \ , \quad a \in \ell_2(\ZZdual) ,\, \ell, \ell' \in \ZZdual.
		$$
		and by $r : G \to \U(\ell_2(\ZZdual))$ the unitary representation defined by
		$$
		r_g a (\ell) = a(g^* \ell) \ , \quad g\in G, \,a \in \ell_2(\ZZdual),\, \ell \in \ZZdual.
		$$
	\end{defi}
    As for the previous case, since $r_g t_\ell = t_{(g^*)^{-1}\ell}r_g$, and recalling that $g_1^* g_2^* = (g_2 g_1)^*$, the map $(\ell,g) \mapsto t_\ell r_g$ defines a unitary representation of the semidirect product group $\ZZdual \rtimes G$ on $\ell_2(\ZZdual)$.

    Finally, we introduce the following representation of $G$ on $L^2(\Omega,\ell_2(\ZZdual))$.
	\begin{defi}\label{def:Pi}
		We denote by $\Pi$ the unitary representation of $G$ on the Hilbert space $L^2(\Omega,\ell_2(\ZZdual))$ defined by
		$$\Pi(g) = \T R_g \T^{-1}.$$
	\end{defi}
    
    The explicit form of the representation $\Pi$ is provided by the following proposition.
    \begin{prop}\label{prop:explicitPI}
    Let $\Omega$ be a fundamental set for $\wh{\RR}/\ZZdual$ and let $\pi : \Omega \times G \to \U(\ell_2(\ZZdual))$ be the map
    $$
    \pi^\omega(g) a (\ell) = r_g t_{\nu_\Omega(g^*\omega)} a(\ell) = a(g^*\ell - \nu_\Omega(g^*\omega))
    $$
    where $\nu_\Omega$ is given by (\ref{eq:sigmaOmega}). 
    Then the representation $\Pi$ reads explicitly
    \begin{equation}\label{eq:explicitPi}
%     \Pi(g)F(\omega) = r_g t_{\nu_\Omega(g^*\omega)} F(g^\sharp \omega) , \quad F \in L^2(\Omega,\ell_2(\ZZdual)), g \in G, \textnormal{a.e. } \omega \in \Omega.
    \Pi(g)F(\omega) = \pi^\omega(g) F(g^\sharp \omega) , \quad F \in L^2(\Omega,\ell_2(\ZZdual)),\, g \in G,\, \textnormal{a.e. } \omega \in \Omega.
    \end{equation}
    Moreover, the map $(\omega,g) \mapsto \pi^\omega(g)$ satisfies $\pi^\omega(\id) = \mathbb{I}_{\ell_2(\ZZdual)}$ and
    \begin{equation}\label{eq:covariance}
    \pi^\omega(g_1g_2) = \pi^\omega(g_1)\pi^{g_1^\sharp\omega}(g_2).
    \end{equation}
    In particular, if $\Omega$ is invariant under the dual action of $G$ on $\wh{\RR}$, then $\pi^\omega(g) = r_g$.
    \end{prop}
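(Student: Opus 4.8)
The plan is to compute $\Pi(g) = \T R_g \T^{-1}$ directly on the fibers, starting from the definition of $\T$ and the intertwining relation \eqref{relations-tau}. First I would take $F \in L^2(\Omega,\ell_2(\ZZdual))$ and set $f = \T^{-1}F$, so that $F(\omega) = \{\wh{f}(\omega+\ell)\}_{\ell\in\ZZdual}$ a.e. $\omega$. Applying the definition of $\T$ and \eqref{relations-tau},
\begin{align*}
\Pi(g)F(\omega) &= \T[R_g f](\omega) = \{\wh{R_g f}(\omega+\ell)\}_{\ell\in\ZZdual} = \{\wh{f}(g^*(\omega+\ell))\}_{\ell\in\ZZdual} = \{\wh{f}(g^*\omega + g^*\ell)\}_{\ell\in\ZZdual},
\end{align*}
using that $g^*$ is linear and $g^*\ZZdual = \ZZdual$. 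Now I would reindex: writing $g^*\omega = g^\sharp\omega - \nu_\Omega(g^*\omega)$ from \eqref{eq:sigmaOmega} and \eqref{eq:gsharp}, the entry becomes $\wh{f}(g^\sharp\omega + (g^*\ell - \nu_\Omega(g^*\omega)))$. Since $g^\sharp\omega \in \Omega$ and $g^*\ell - \nu_\Omega(g^*\omega) \in \ZZdual$, this is exactly the $(g^*\ell - \nu_\Omega(g^*\omega))$-th entry of $F(g^\sharp\omega)$. Comparing with the definitions of $r_g$ and $t_\ell$ in Definition \ref{def:rpeque}, one sees $\{F(g^\sharp\omega)(g^*\ell - \nu_\Omega(g^*\omega))\}_{\ell\in\ZZdual} = r_g t_{\nu_\Omega(g^*\omega)} F(g^\sharp\omega) = \pi^\omega(g)F(g^\sharp\omega)$, which is \eqref{eq:explicitPi}.

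For the algebraic identities I would argue as follows. The relation $\pi^\omega(\id) = \mathbb{I}_{\ell_2(\ZZdual)}$ is immediate since $\id^* = \id$ on $\wh\RR$ and $\nu_\Omega(\omega) = q_\Omega(\omega) - \omega = 0$ for $\omega \in \Omega$. For \eqref{eq:covariance}, rather than a direct index computation I would deduce it from the fact that $\Pi$ is a representation together with the already-established action property $g_1^\sharp g_2^\sharp = (g_2g_1)^\sharp$ from the displayed computation preceding the proposition: applying \eqref{eq:explicitPi} twice,
\[
\Pi(g_1)\Pi(g_2)F(\omega) = \pi^\omega(g_1)\,(\Pi(g_2)F)(g_1^\sharp\omega) = \pi^\omega(g_1)\pi^{g_1^\sharp\omega}(g_2)\,F(g_2^\sharp g_1^\sharp\omega) = \pi^\omega(g_1)\pi^{g_1^\sharp\omega}(g_2)\,F((g_1g_2)^\sharp\omega),
\]
while directly $\Pi(g_1g_2)F(\omega) = \pi^\omega(g_1g_2)F((g_1g_2)^\sharp\omega)$; since $\Pi$ is a homomorphism, $\Pi(g_1)\Pi(g_2) = \Pi(g_1g_2)$, and matching the $\ell_2(\ZZdual)$-valued coefficients (valid because $F$ is arbitrary and $(g_1g_2)^\sharp$ is a measurable bijection of $\Omega$) yields \eqref{eq:covariance}. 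Finally, if $\Omega$ is invariant under the dual action, then $g^*\omega \in \Omega$ for a.e. $\omega$, so $q_\Omega(g^*\omega) = g^*\omega$ and hence $\nu_\Omega(g^*\omega) = 0$, giving $\pi^\omega(g) = r_g t_0 = r_g$.

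I expect the only real care is needed in the reindexing step: one must check that $\ell \mapsto g^*\ell - \nu_\Omega(g^*\omega)$ is a bijection of $\ZZdual$ onto itself (it is, being the composition of the automorphism $g^*|_{\ZZdual}$ with a translation by the fixed element $\nu_\Omega(g^*\omega) \in \ZZdual$), so that no entries of $F(g^\sharp\omega)$ are lost or repeated, and that all identities hold for a.e. $\omega$ with the exceptional null set independent of the finitely-many group elements involved — which is automatic since $G$ is countable. Everything else is bookkeeping with the definitions of $\T$, $R_g$, $r_g$, $t_\ell$, and $\nu_\Omega$; unitarity of $\Pi$ is already guaranteed by Definition \ref{def:Pi} since it is a conjugate of a unitary representation by the unitary $\T$.
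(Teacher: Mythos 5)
Your proposal is correct and follows essentially the same route as the paper: the identity \eqref{eq:explicitPi} is obtained by the same fiberwise computation $\T[R_gf](\omega)=\{\wh f(g^*\omega+g^*\ell)\}_{\ell\in\ZZdual}$ rewritten via $\nu_\Omega$, and \eqref{eq:covariance} is deduced exactly as in the paper by applying \eqref{eq:explicitPi} twice and using that $\Pi$ is a representation. The extra remarks you add (the reindexing bijection $\ell\mapsto g^*\ell-\nu_\Omega(g^*\omega)$, $\pi^\omega(\id)=\mathbb{I}$, and the invariant-$\Omega$ case) are correct and only make explicit points the paper leaves implicit.
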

    \begin{proof}
    By (\ref{eq:sigmaOmega}), (\ref{eq:gsharp}) and (\ref{relations-tau}), for all $f \in L^2(\RR)$ we have that
    \begin{align*}
    \T[\rot{g}f](\omega) & = \{\wh{f}(g^*\omega + g^*\ell)\}_{\ell \in \ZZdual} = \{\wh{f}(g^\sharp\omega + g^*\ell - \nu_\Omega(g^*\omega) )\}_{\ell \in \ZZdual}\\
    & = \pi^\omega(g) \T[f](g^\sharp \omega)
    \end{align*}
    which proves (\ref{eq:explicitPi}) using that $\T$ is an isomorphism. In order to prove (\ref{eq:covariance}), recall that $\Pi$ is a unitary representation, because it is defined as the intertwining of a unitary representation with an isomorphism of Hilbert spaces. Using (\ref{eq:explicitPi}), this implies that
    $$
    \Pi(g_1)\Pi(g_2)F(\omega) = \Pi(g_1 g_2) F(\omega) = \pi^\omega(g_1g_2)F(g_2^\sharp g_1^\sharp \omega).
    $$
    On the other hand, we have
    $$
    \Pi(g_1)\Pi(g_2)F(\omega) = \pi^\omega(g_1)\Pi(g_2)F(g_1^\sharp\omega) = \pi^\omega(g_1)\pi^{g_1^\sharp\omega}(g_2)F(g_2^\sharp g_1^\sharp \omega).
    $$
    Since both relations hold for all $F \in L^2(\Omega,\ell_2(\ZZdual))$, all $g \in G$ and a.e.  $\omega \in \Omega$, this proves (\ref{eq:covariance}).
    \end{proof}
%    \begin{rem}
%    \textcolor{blue}{In the Proposition?}
%    If $\Omega$ is invariant under the dual action of $G$ on $\wh{\RR}$, then $\pi^\omega(g) = r_g$. If this is not the case, then \textcolor{red}{the action of $\Pi$  is associated with a nontrivial action of $G$ on the fiber bundle $\Omega \times \ZZdual$.}
%    \end{rem}

% 	Hence, by \eqref{relations-tau}, we have that for $f\in  L^2(\RR)$, $(k,g)\in \Gamma$, and $\omega \in \Omega$, the following intertwining property is satisfied (see \cite{BCHM}):
% 	\begin{equation}\label{intertwining}
% 	\T[\tr{k} \rot{g} f] (\omega) = e^{-2\pi i \omega.k} r_g \T[f](g^*\omega).
% 	\end{equation}
% 	
% 	
% 	
% 	Due to \eqref{intertwining}, we have that
% 	\begin{equation}\label{eq:intertwinedrep}
% 	\Pi(g) F(\omega) = r_g(F(g^*\omega)) \, , \quad F \in L^2(\Omega,\ell_2(\ZZdual)).
% 	\end{equation}

	\subsection{$\Gamma$-invariant spaces}
	
	Given this setting, we are now interested in the subspaces of $L^2(\RR)$ that are invariant under the action of the unitary representation $T_kR_g$. These spaces have been first studied in great detail in \cite{BCHM}.
	
	\begin{defi}
		We say that a closed subspace $V \subset L^2(\RR)$ is $\Gamma$-invariant if $\tr{k} \rot{g} V \subset V$ for all $(k,g) \in \Gamma.$ 
	\end{defi}
	
	A $\Gamma$-invariant space is, in particular, $\ZZ$-invariant as it can be seen that $V$ is $\Gamma$-invariant if
	$$
	f \in V \ \Rightarrow \ \tr{k}f \in V \ \ \forall \ k \in \ZZ \, , \ \textnormal{and} \ \rot{g}f \in V \ \ \forall \ g \in G.
	$$
	
	Consequently, $V$ is $\Gamma$-invariant if and only if $V$ is $\ZZ$-invariant, and $\Pi(g)\T[V] \subset \T[V]$ for every $g \in G$, where $\Pi$ is the representation given in Definition \ref{def:Pi}.
	
	The following theorem gives a characterization of $\Gamma$-invariant closed subspaces in terms of a covariance property of the range function associated to its $\ZZ$-invariant subspace. 
	This theorem generalizes the one in \cite[Theorem 3.3]{BCHM} where a proof was given under the additional hypothesis that the fundamental domain $\Omega$ is invariant under the dual action of $G$ on 
	$\wh\RR$. However, in that paper this additional hypothesis was omitted, as one of the reviewers of this paper pointed out.
	%This theorem appears in \cite{BCHM}, where a proof is given for the case of a fundamental domain $\Omega$ that is invariant under the dual action of $G$ on $\wh{\RR}$.
	
	\begin{theo}\label{theo:rangecovariance}
		A closed subspace $V$ of $L^2(\RR)$ is $\Gamma$-invariant if and only if it is $\ZZ$-invariant and its associated range function $\J$ satisfies
		\begin{equation}\label{eq:rangecovariance}
		\J(\omega) = \pi^\omega(g) \J(g^\sharp\omega) \, , \ \textnormal{a.e.} \ \omega \in \Omega \, , \ \forall g \in G.
		\end{equation}
	\end{theo}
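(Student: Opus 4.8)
The plan is to reduce everything to the equivalence recorded just before the statement — $V$ is $\Gamma$-invariant if and only if $V$ is $\ZZ$-invariant and $\Pi(g)\T[V]\subset\T[V]$ for every $g\in G$ — and then to rephrase the condition $\Pi(g)\T[V]\subset\T[V]$ fiberwise by means of the explicit formula $\Pi(g)F(\w)=\pi^\w(g)F(g^\sharp\w)$ from Proposition \ref{prop:explicitPI}. Two structural facts are used throughout: first, since $G$ acts on $\wh\RR$ by measure-preserving automorphisms fixing $\ZZdual$, each $g^\sharp:\Omega\to\Omega$ is a bimeasurable bijection preserving the measure of $\Omega$, with inverse $(g^{-1})^\sharp$ (because $g_1^\sharp g_2^\sharp=(g_2g_1)^\sharp$), so that a measurable property holds for a.e.\ $\w\in\Omega$ iff it holds at $g^\sharp\w$ for a.e.\ $\w\in\Omega$; second, since $G$ is countable, a family of a.e.\ statements indexed by $G$ holds on a single conull set. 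For the easy implication, suppose $V$ is $\ZZ$-invariant with range function $\J$ satisfying \eqref{eq:rangecovariance}. For $f\in V$ and $g\in G$, Proposition \ref{prop:explicitPI} gives $\T[\rot{g}f](\w)=\pi^\w(g)\T[f](g^\sharp\w)$ a.e.; since $\T[f](\eta)\in\J(\eta)$ a.e., the change of variables yields $\T[f](g^\sharp\w)\in\J(g^\sharp\w)$ a.e., whence $\T[\rot{g}f](\w)\in\pi^\w(g)\J(g^\sharp\w)=\J(\w)$ a.e.\ by \eqref{eq:rangecovariance}. Thus $\rot{g}f\in V$ for all $g$, and combined with $\ZZ$-invariance this shows $V$ is $\Gamma$-invariant.

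For the converse, assume $V$ is $\Gamma$-invariant; then $V$ is $\ZZ$-invariant, say $V=S(\A)$ with $\A\subset L^2(\RR)$ countable, so by Theorem \ref{Th:Helson} $\J(\eta)=\ol{\vsp}\{\T[\varphi](\eta)\,:\,\varphi\in\A\}$ for a.e.\ $\eta$. Fix $g\in G$. For each $\varphi\in\A$ we have $\rot{g}\varphi\in V$, hence $\pi^\w(g)\T[\varphi](g^\sharp\w)=\T[\rot{g}\varphi](\w)\in\J(\w)$ for a.e.\ $\w$; choosing a single conull set good for all $\varphi\in\A$, using that $\{\T[\varphi](g^\sharp\w)\,:\,\varphi\in\A\}$ spans $\J(g^\sharp\w)$ for a.e.\ $\w$, and using the unitarity (hence continuity and linearity) of $\pi^\w(g)$, we obtain $\pi^\w(g)\J(g^\sharp\w)\subseteq\J(\w)$ for a.e.\ $\w$. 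This inclusion holds for every $g\in G$. Applying it to $g^{-1}$ and replacing $\w$ by $g^\sharp\w$, and using $(g^{-1})^\sharp g^\sharp=\mathrm{id}_\Omega$, gives $\pi^{g^\sharp\w}(g^{-1})\J(\w)\subseteq\J(g^\sharp\w)$ a.e.; composing with $\pi^\w(g)$ and invoking the cocycle identity \eqref{eq:covariance} in the form $\pi^\w(g)\pi^{g^\sharp\w}(g^{-1})=\pi^\w(\id)=\mathbb{I}$, we get $\J(\w)\subseteq\pi^\w(g)\J(g^\sharp\w)$ a.e. The two inclusions give \eqref{eq:rangecovariance} for each $g$, hence for all $g$ at once over the countable group $G$.

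The step I expect to be the main obstacle is the measure-theoretic bookkeeping attached to the substitution $\w\mapsto g^\sharp\w$: one must establish that $g^\sharp$ is a bimeasurable, measure-preserving bijection of $\Omega$ (so that a.e.\ statements can be transported in both directions), and one must be careful that the quantifiers ``$\forall g\in G$'' and ``a.e.\ $\w$'' may be interchanged only because $G$ is countable. The second delicate point is upgrading the one-sided inclusion $\pi^\w(g)\J(g^\sharp\w)\subseteq\J(\w)$ to an equality; rather than arguing with dimensions or projections directly, the efficient route is to exploit that $g$ ranges over a group — applying the inclusion to $g^{-1}$ and collapsing the resulting composite via $(g^{-1})^\sharp=(g^\sharp)^{-1}$ and $\pi^\w(g)\pi^{g^\sharp\w}(g^{-1})=\mathbb{I}$ from Proposition \ref{prop:explicitPI}.
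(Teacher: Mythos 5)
Your proof is correct and follows essentially the same route as the paper: both reduce to the condition $\Pi(g)\T[V]\subset\T[V]$, use the explicit formula of Proposition \ref{prop:explicitPI}, and obtain the reverse inclusion by playing $g$ against $g^{-1}$ via the cocycle identity \eqref{eq:covariance} and $(g^{-1})^\sharp=(g^\sharp)^{-1}$, with the same implicit use of the nonsingularity of $g^\sharp$ and countability of $G$. The only cosmetic difference is that the paper proves the slightly stronger claim that $\omega\mapsto\pi^\omega(g)\J(g^\sharp\omega)$ is the range function of $\rot{g}(V)$ by testing arbitrary $H\in L^2(\Omega,\ell_2(\ZZdual))$, whereas you derive the inclusion $\pi^\omega(g)\J(g^\sharp\omega)\subseteq\J(\omega)$ from a countable generating set via Theorem \ref{Th:Helson}.
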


	\begin{proof}
	Assume that $V$ is $\ZZ$-invariant with range function $\J$ and, for $g \in G$, let us denote by $V_g = \rot{g}(V)$. Then $V_g$ is also $\ZZ$-invariant, and we denote by $\J_g$ its range function. We claim that
	$$
	\J_g(\omega) = \pi^\omega(g) \J(g^\sharp\omega) \, , \ \textnormal{a.e.} \ \omega \in \Omega \, , \ \forall g \in G.
	$$
	Note that, by proving this claim, we also prove the statement of the Theorem.
	
	To prove the claim recall that, by Theorem \ref{Th:Helson}, $F \in \T[V] \iff F(\omega) \in \J(\omega)$ for a.e. $\omega \in \Omega$ and, by (\ref{eq:explicitPi}), for $F \in \T[V]$ we have $\Pi(g)F(\omega) = \pi^\omega(g)F(g^\sharp\omega)$.
	
	Assume first that, for a given $g \in G$, we have a function $H \in L^2(\Omega,\ell_2(\ZZdual))$ such that $H(\omega) \in \J_g(\omega)$ for a.e. $\omega \in \Omega$, that is $H \in \T[\rot{g}(V)]$. We want to prove that $H(\omega) \in \pi^\omega(g) \J(g^\sharp\omega)$ for a.e. $\omega \in \Omega$. By Definition \ref{def:Pi}, we have $\Pi(g^{-1})H \in \T[V]$, which implies
	$$
	\pi^\omega(g^{-1}) H((g^{-1})^\sharp\omega) \in \J(\omega) \, , \ \textnormal{a.e.} \ \omega \in \Omega.
	$$
	Denoting by $\omega' = (g^{-1})^\sharp\omega$, this reads equivalently
	$$
	\pi^{g^\sharp\omega'}(g^{-1}) H(\omega') \in \J(g^\sharp\omega') \, , \ \textnormal{a.e.} \ \omega' \in \Omega.
	$$
	Thus, by applying $\pi^{\omega'}(g)$ on both sides, and using Proposition \ref{prop:explicitPI}, we obtain $H(\omega) \in \pi^\omega(g) \J(g^\sharp\omega)$ for a.e. $\omega \in \Omega$.
	
	Assume now that, for a $g \in G$, we have an $H \in L^2(\Omega,\ell_2(\ZZdual))$ such that $H(\omega) \in \pi^\omega(g) \J(g^\sharp\omega)$ for a.e. $\omega \in \Omega$, and let $F = \Pi(g^{-1})H$. Then
	$$
	H(\omega) = \pi^\omega(g) F(g^\sharp\omega).
	$$
	Since $\pi^\omega(g)$ is a unitary operator on $\ell_2(\ZZdual)$, and $g^\sharp$ is a bijection of $\Omega$ we have obtained that $F(\omega) \in \J(\omega)$ for a.e. $\omega \in \Omega$. That is, $F \in \T[V]$, or, equivalently, $H \in \Pi(g)(\T[V])=\T[\rot{g}V]$. Thus, $H(\omega) \in \J_g(\omega)$ for a.e. $\omega \in \Omega$.
	\end{proof}

	\subsection{$\Gamma$-preserving operators}\label{sec:Gamma-pres}
	
	In this subsection we consider operators defined on $\Gamma$-invariant spaces which commute with the unitary representation $T_kR_g$. 
	
	\begin{defi}
		Let $V, V'\subset L^2({\RR})$ be two $\Gamma$-invariant spaces. We say that a bounded operator ${L:V\to V'}$ is $\Gamma$-preserving if $LT_kR_g = T_kR_g L$ for every $k\in\ZZ$ and $g\in G$.	
	\end{defi}
	
	Observe that, in particular, $L$ is $\Gamma$-preserving if and only if $L$ is $\ZZ$-preserving and $LR_g=R_gL$ for every $g\in G$. We will focus on bounded $\Gamma$-preserving operators acting on a $\Gamma$-invariant $V$, that is $L:V\to V$. Since $L$ is $\ZZ$-preserving, there exists a corresponding range operator $O:\J\to \J$. 
	
	In the same spirit of Theorem \ref{theo:rangecovariance}, we have the following result.

	\begin{theo}\label{O(g*w)}
		Let $V\subset L^2({\RR})$ be a $\Gamma$-invariant space and $L:V\to V$ a bounded operator with corresponding range operator $O$. Then, $L$ is $\Gamma$-preserving if and only if it is $\ZZ$-preserving and for all $g\in G$ and a.e. $\w\in\Omega$,
		\begin{equation}\label{eq:O(g*w)}
		O(g^\sharp\w)=\pi^{g^\sharp\w}(g^{-1})O(\w)\pi^\omega(g).
		\end{equation}
	\end{theo}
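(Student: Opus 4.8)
The plan is to read off the identity \eqref{eq:O(g*w)} by transporting the condition ``$L\rot{g}=\rot{g}L$ for all $g\in G$'' — which, together with $\ZZ$-preservingness, characterizes $\Gamma$-preservingness as observed right before the statement — to the side of $L^2(\Omega,\ell_2(\ZZdual))$ via the fiberization $\T$. Set $\tilde L=\T L\T^{-1}$ acting on $\mathcal M_{\J}=\T[V]$; since $L$ is $\ZZ$-preserving, $\tilde L F(\w)=O(\w)F(\w)$ for every $F\in\mathcal M_{\J}$ and a.e.\ $\w\in\Omega$. By Definition \ref{def:Pi}, $\rot{g}$ corresponds under $\T$ to $\Pi(g)$, and since $V$ is $\Gamma$-invariant, $\Pi(g)$ maps $\mathcal M_{\J}$ into itself; hence $\rot{g}L=L\rot{g}$ is equivalent to $\tilde L\,\Pi(g)=\Pi(g)\,\tilde L$ on $\mathcal M_{\J}$.

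First I would expand both sides using the explicit form $\Pi(g)F(\w)=\pi^\w(g)F(g^\sharp\w)$ from Proposition \ref{prop:explicitPI}: for $F\in\mathcal M_{\J}$ and a.e.\ $\w\in\Omega$,
$$
(\tilde L\,\Pi(g)F)(\w)=O(\w)\,\pi^\w(g)\,F(g^\sharp\w),\qquad (\Pi(g)\,\tilde L F)(\w)=\pi^\w(g)\,O(g^\sharp\w)\,F(g^\sharp\w),
$$
both expressions lying in $\J(\w)$ because, by Theorem \ref{theo:rangecovariance}, $\pi^\w(g)$ maps $\J(g^\sharp\w)$ onto $\J(\w)$. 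Thus $\tilde L\,\Pi(g)=\Pi(g)\,\tilde L$ is equivalent to the fiberwise relation $O(\w)\pi^\w(g)v=\pi^\w(g)O(g^\sharp\w)v$ for all $v\in\J(g^\sharp\w)$, a.e.\ $\w$. To pass from ``for all $F\in\mathcal M_{\J}$'' to ``for all $v\in\J(g^\sharp\w)$, a.e.\ $\w$'' I would fix, via Lemma \ref{lem:An} (or Castaing's Theorem \ref{thm:castaign} applied to $\J$), a countable family $\{\varphi_i\}$ whose fibers $\{\T\varphi_i(\cdot)\}$ span $\J(\cdot)$ a.e., evaluate the operator identity on each $\T\varphi_i$, and take the union of the exceptional null sets — using here that $g^\sharp$ is a measure-preserving bijection of $\Omega$, so that it carries null sets to null sets. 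Finally, I would invert $\pi^\w(g)$: the cocycle relation \eqref{eq:covariance} with $g_1=g$, $g_2=g^{-1}$, together with $\pi^\w(\id)=\mathbb{I}$, gives $\pi^\w(g)^{-1}=\pi^{g^\sharp\w}(g^{-1})$, and applying this on the left of the fiberwise relation yields exactly \eqref{eq:O(g*w)} on $\J(g^\sharp\w)$.

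For the converse I would simply run the same chain of equivalences in reverse: if $L$ is $\ZZ$-preserving and its range operator $O$ satisfies \eqref{eq:O(g*w)}, then the two displayed expressions coincide on every $F\in\mathcal M_{\J}$, so $\tilde L\,\Pi(g)=\Pi(g)\,\tilde L$, hence $\rot{g}L=L\rot{g}$ for every $g\in G$, and therefore $L$ is $\Gamma$-preserving. The only genuinely delicate point — and it is a minor one — is the measurability and almost-everywhere bookkeeping in moving between the global identity on $\mathcal M_{\J}$ and the pointwise identity on the fibers $\J(g^\sharp\w)$, together with keeping track of the change of variable $\w\mapsto g^\sharp\w$ on the base; the rest is a direct substitution built on Proposition \ref{prop:explicitPI} and Theorem \ref{theo:rangecovariance}.
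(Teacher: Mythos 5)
Your proposal is correct and follows essentially the same route as the paper: conjugate the commutation relation $R_gL=LR_g$ by the fiberization $\T$, expand $\Pi(g)$ via the explicit formula of Proposition \ref{prop:explicitPI}, and invert $\pi^\w(g)$ through the cocycle identity \eqref{eq:covariance} to arrive at \eqref{eq:O(g*w)}, with the converse obtained by reversing the computation. Your extra step of passing from the identity on all $F\in\mathcal M_{\J}$ to the fiberwise identity on $\J(g^\sharp\w)$ via a countable generating family and a union of null sets is a careful filling-in of a point the paper's proof leaves implicit, not a different argument.
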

	\begin{proof}
		Assume that $L$ is $\Gamma$-preserving. Fix $g\in G$ and note that for every $f\in V$, and for a.e. $\w\in \Omega$
		\begin{align*}
		O(\w)\left(\Pi(g)\mathcal T[f](\w)\right) &= O(\w)\left(\mathcal T[R_gf](\w)\right) =\mathcal T[LR_gf](\w)\\
		&=\mathcal T[R_gLf](\w) = \Pi(g)\mathcal T[Lf](\w)	\\
		&=\Pi(g)O(\w)\mathcal T[f](\w).
		\end{align*}
		%Hence, a.e. $\w\in\Omega$, we have that $O(\w)\Pi(g)=\Pi(g)O(\w)$. Moreover, 
		Hence, if $F\in \mathcal T[V]$, by \eqref{eq:explicitPi} we have that for a.e. $\w\in \Omega$
		$$O(\w)\pi^\omega(g)F(g^\sharp\w)=\pi^\omega(g)O(g^\sharp\w)F(g^\sharp\w).$$
		Since, by (\ref{eq:covariance}), $(\pi^\omega(g))^{-1} = \pi^{g^\sharp\w}(g^{-1})$, we deduce that for a.e. $\w\in\Omega$,
		\begin{equation*}
		O(g^\sharp\w)=\pi^{g^\sharp\w}(g^{-1})O(\w)\pi^\w(g).
		\end{equation*}

		For the converse, if \eqref{eq:O(g*w)} holds, then for every $F\in\T[V]$ we have that for a.e. $\w\in\Omega$ and for every $g\in  G$, $$O(\w)\Pi(g)F(\w)=\Pi(g)O(\w)F(\w).$$ By the computation above, this means that for every $f\in V$ and for every $g\in G$, $\T[LR_gf]=\T[R_gLf]$. Thus,  $LR_g=R_gL$ for every $g\in G$.
	\end{proof}
	
	As a consequence, we see that much of the structure of $O$ is preserved by the action of $G$ on $\Omega$. In particular, we have the next proposition concerning the spectra of $O(\w).$
	
	\begin{prop}\label{prop:spectrum-invariant}
		Let $V\subset L^2({\RR})$ be a $\Gamma$-invariant space and $L:V\to V$ a bounded $\Gamma$-preserving operator with corresponding range operator $O$. Then for all $g\in G$ and a.e. $\w\in\Omega$,
		\begin{enumerate}
			\item $\sigma(O(\w))=\sigma(O(g^\sharp\w))$.
			\item\label{prop:spectrum-invariant-2} $\sigma_p(O(\w))=\sigma_p(O(g^\sharp\w))$.
			%\item If $\lambda\in\C$ is an eigenvalue of $O(\w)$ then $\lambda$ is also an eigenvalue of $O(g^*\w)$.
		\end{enumerate}
	\end{prop}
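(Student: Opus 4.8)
The plan is to deduce both identities directly from the conjugation formula (\ref{eq:O(g*w)}) of Theorem \ref{O(g*w)}, by observing that it exhibits $O(g^\sharp\w)$ as a \emph{unitary conjugate} of $O(\w)$, and then invoking the invariance of spectrum and point spectrum under unitary equivalence. There should be no substantive obstacle here; the work is essentially bookkeeping.

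First I would record two preliminary facts. By Proposition \ref{prop:explicitPI}, for each fixed $g\in G$ the operator $\pi^\omega(g)$ belongs to $\U(\ell_2(\ZZdual))$, hence is unitary; and by the covariance relation (\ref{eq:covariance}) applied with $g_1=g$, $g_2=g^{-1}$ one has $\pi^\omega(g)\pi^{g^\sharp\omega}(g^{-1}) = \pi^\omega(\id) = \mathbb{I}_{\ell_2(\ZZdual)}$, so that $\pi^{g^\sharp\w}(g^{-1}) = (\pi^\omega(g))^{-1}$. Next, since $V$ is $\Gamma$-invariant, Theorem \ref{theo:rangecovariance} gives $\J(\w) = \pi^\omega(g)\J(g^\sharp\w)$ for a.e.\ $\w\in\Omega$, so the restriction $U_\w := \pi^\omega(g)\big|_{\J(g^\sharp\w)}$ is a unitary isomorphism of $\J(g^\sharp\w)$ onto $\J(\w)$. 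Combining these with (\ref{eq:O(g*w)}) yields, for a.e.\ $\w\in\Omega$,
$$
O(g^\sharp\w) = U_\w^{-1}\, O(\w)\, U_\w ,
$$
i.e.\ $O(g^\sharp\w)$ and $O(\w)$ are unitarily equivalent as bounded operators on Hilbert spaces.

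The conclusion then follows from the elementary fact that unitarily equivalent bounded operators have the same spectrum and the same point spectrum: from $O(g^\sharp\w)-\lambda = U_\w^{-1}\big(O(\w)-\lambda\big)U_\w$ for every $\lambda\in\C$, the left-hand side is invertible (respectively has nontrivial kernel) if and only if $O(\w)-\lambda$ is, which gives both $\sigma(O(g^\sharp\w)) = \sigma(O(\w))$ and $\sigma_p(O(g^\sharp\w)) = \sigma_p(O(\w))$. Finally, since the relation (\ref{eq:O(g*w)}) and the covariance of $\J$ hold only up to a $g$-dependent null set, I would use that $G$ is countable to fix a single subset of $\Omega$ of full measure on which all of these identities hold simultaneously for every $g\in G$, so that the statement ``for all $g\in G$ and a.e.\ $\w\in\Omega$'' is literally valid. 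The only points requiring a little care are which fiber space each operator acts on — which is why one conjugates by the restriction $U_\w$ rather than by $\pi^\omega(g)$ itself — and this countability argument to make the almost-everywhere quantifier uniform in $g$.
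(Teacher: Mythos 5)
Your proposal is correct and follows essentially the same route as the paper: both rest on the conjugation identity of Theorem \ref{O(g*w)} together with $(\pi^\w(g))^{-1}=\pi^{g^\sharp\w}(g^{-1})$, the paper simply verifying by hand that non-invertibility of $O(\w)-\lambda\mathcal I_\w$ transfers and that $\pi^{g^\sharp\w}(g^{-1})$ maps eigenvectors to eigenvectors, while you package this as unitary equivalence of the fiber operators (plus a harmless countability remark to make the null set uniform in $g$).
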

	
	\begin{proof} 
		Fix $g\in G$ and $\w\in \Omega$ where $\J$ and $O$ are defined.
		Assume that $\lambda\in\sigma(O(\w))$, then $O(\w)-\lambda \mathcal I_{\w}$ is not invertible in $\J(\w)$. Thus, \begin{align*}
		O(g^\sharp\w)-\lambda \mathcal I_{g^\sharp\w} & = \pi^{g^\sharp\w}(g^{-1}) O(\w) \pi^{\w}(g) - \lambda\, \pi^{g^\sharp\w}(g^{-1})\pi^{\w}(g)\\
		&= \pi^{g^\sharp\w}(g^{-1}) (O(\w)-\lambda \mathcal I_{\w})\pi^{\w}(g),
		\end{align*}
		which implies that $O(g^\sharp\w)-\lambda \mathcal I_{g^\sharp\w}$ is not invertible in $\J(g^\sharp\w)$, hence proving $(1)$. Now, to prove $(2)$, suppose $\lambda\in\C$ is an eigenvalue of $O(\w)$, then there exists $v\neq 0$ and $v\in\ker\left(O(\w)-\lambda\mathcal I_{\w}\right)$. We will see that $\pi^{g^\sharp\w}(g^{-1})v\in\ker \left(O(g^\sharp\w)-\lambda\mathcal I_{g^\sharp\w}\right)$. Indeed, we have that
		\begin{align*}
		O(g^\sharp\w)(\pi^{g^\sharp\w}(g^{-1})v) & = \pi^{g^\sharp\w}(g^{-1})O(\w)\pi^{\w}(g)(\pi^{g^\sharp\w}(g^{-1}) v)= \pi^{g^\sharp\w}(g^{-1}) O(\w) v\\
		& = \pi^{g^\sharp\w}(g^{-1}) \lambda\, v =\lambda \,(\pi^{g^\sharp\w}(g^{-1}) v).
		\end{align*}
		Since $v\neq 0$, then $\pi^{g^\sharp\w}(g^{-1}) v\neq 0$ and consequently $\ker \left(O(g^\sharp\w)-\lambda\mathcal I_{g^\sharp\w}\right)\neq\{0\}$.
	\end{proof}
	
	We remark that given a measurable function $\lambda:\Omega\to\C$ such that $\lambda(\w)$ is an eigenvalue of $O(\w)$ for a.e. $\w\in\Omega$ the proposition above does not imply that $\lambda(\w)=\lambda(g^\sharp\w)$ for every $g\in G$ and a.e. $\w\in\Omega$.
	
	Since we are interested in obtaining a diagonalization for $\Gamma$-preserving operators similar to Definition \ref{def:s-diag}, we must find some suitable operators to play the role of $\Gamma$-{\it eigenvalue}. The natural choice would be $K_{a}=\sum_{(s,h)\in \Gamma} a(s,h)T_sR_h$ for some sequence  $a=\{a(s,h)\}_{(s,h)\in\Gamma}$ satisfying certain conditions. However, if we require that these operators commute with $T_kR_g$ for every $k\in\ZZ$ and $g\in G$, it is not difficult to see that we are left only with a multiple of the identity.

	Hence, we turn to the $\ZZ$-preserving operators $\Lambda_a$ of Definition \ref{def:Lambda_a}, with $a\in\ell_2(\ZZ)$ of bounded spectrum. We are interested in characterizing such operators that commute with the unitary representation $R_g$ of $G$ on $L^2(\RR)$.
	
	For the next proposition, we introduce the following representation.
	
	\begin{defi}\label{def:tilder}
		We denote by $\tilde{r} : G\to \mathcal U (\ell_2(\ZZ))$ the representation defined by
		$$(\tilde{r}_g(a))(s) = a(g^{-1}s), \quad g\in G,\, a\in\ell_2(\ZZ),\,s\in\ZZ.$$
	\end{defi}
	
	\begin{prop}\label{prop:Lambda_a-G-preserving} 
		Let $a\in\ell_2(\ZZ)$ of bounded spectrum and let $\Lambda_a$ an operator as in Definition \ref{def:Lambda_a}. Then, the following statements are equivalent.
		\begin{enumerate}
			\item $R_g\Lambda_a = \Lambda_a R_g$ for every $g\in G$.
			\item\label{invariance-a-r_g} For every $g\in G$, $\tilde{r}_g(a) = a$.
			\item\label{invariance-a} For every $g\in G$, $\wh{a}(g^{\sharp}\w)=\wh{a}(\w)$ for a.e. $\w\in \Omega$.
		\end{enumerate} 
	\end{prop}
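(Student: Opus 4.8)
The plan is to close the cycle by proving $(2)\Leftrightarrow(3)$ through a direct Fourier computation and $(1)\Leftrightarrow(3)$ by applying Theorem \ref{O(g*w)} to the $\Gamma$-invariant space $L^2(\RR)$ itself.

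For $(2)\Leftrightarrow(3)$, first I would compute the Fourier transform of $\tilde r_g(a)$. Using $\tilde r_g(a)(s)=a(g^{-1}s)$, the change of variable $s=gu$ (legitimate since $g\ZZ=\ZZ$), and the identity $e^{-2\pi i\w\cdot gu}=e^{-2\pi i(g^*\w)\cdot u}$ coming from $\langle g^*\w,u\rangle=\langle\w,gu\rangle$, one obtains
$$
\widehat{\tilde r_g(a)}(\w)=\sum_{u\in\ZZ}a(u)e^{-2\pi i(g^*\w)\cdot u}=\wh a(g^*\w),
$$
where on the right $\wh a$ is viewed as the $\ZZdual$-periodic function on $\wh\RR$ given by the same series (each $e^{-2\pi i\w\cdot s}$ with $s\in\ZZ$ is constant on $\ZZdual$-cosets because $\langle\ell,s\rangle=1$ for $\ell\in\ZZdual$). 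Since $g^\sharp\w=g^*\w+\nu_\Omega(g^*\w)$ with $\nu_\Omega(g^*\w)\in\ZZdual$, periodicity gives $\wh a(g^*\w)=\wh a(g^\sharp\w)$, so $\widehat{\tilde r_g(a)}(\w)=\wh a(g^\sharp\w)$ for a.e. $\w\in\Omega$. Because $a\mapsto\wh a$ is an isometric injection of $\ell_2(\ZZ)$ into $L^2(\Omega)$, the equality $\tilde r_g(a)=a$ is equivalent to $\wh a(g^\sharp\w)=\wh a(\w)$ for a.e. $\w\in\Omega$; quantifying over $g\in G$ yields $(2)\Leftrightarrow(3)$.

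For $(1)\Leftrightarrow(3)$, recall (from the discussion after Definition \ref{def:Lambda_a}) that $\Lambda_a:L^2(\RR)\to L^2(\RR)$ is $\ZZ$-preserving with range operator $O_a(\w)=\wh a(\w)\,\mathbb{I}_{\ell_2(\ZZdual)}$, and that $L^2(\RR)$ is $\Gamma$-invariant. Since a $\ZZ$-preserving operator is $\Gamma$-preserving exactly when it commutes with every $R_g$, statement $(1)$ says precisely that $\Lambda_a$ is $\Gamma$-preserving, so Theorem \ref{O(g*w)} makes $(1)$ equivalent to
$$
\wh a(g^\sharp\w)\,\mathbb{I}=\pi^{g^\sharp\w}(g^{-1})\big(\wh a(\w)\,\mathbb{I}\big)\pi^\w(g)\qquad\text{for all }g\in G,\ \text{a.e. }\w\in\Omega.
$$
The right-hand side equals $\wh a(\w)\,\pi^{g^\sharp\w}(g^{-1})\pi^\w(g)$, and applying the cocycle identity (\ref{eq:covariance}) with $g_1=g$, $g_2=g^{-1}$ (so that $\mathbb{I}=\pi^\w(\id)=\pi^\w(g)\pi^{g^\sharp\w}(g^{-1})$, whence $\pi^{g^\sharp\w}(g^{-1})=(\pi^\w(g))^{-1}$) this collapses to $\wh a(\w)\,\mathbb{I}$. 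Thus the displayed condition is just $\wh a(g^\sharp\w)=\wh a(\w)$ for all $g$ and a.e. $\w$, i.e. $(3)$.

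I expect the only genuinely delicate points to be bookkeeping: tracking the direction of the $G$-action (the $g^{-1}$ in $\tilde r$ is exactly what makes the substitution produce $\wh a(g^*\w)$ rather than $\wh a((g^{-1})^*\w)$) and the careful passage between $g^*$ on $\wh\RR$ and $g^\sharp$ on $\Omega$ via the $\ZZdual$-periodicity of $\wh a$. If one prefers to avoid citing Theorem \ref{O(g*w)}, $(1)\Leftrightarrow(3)$ can instead be done by hand: $R_g\Lambda_a=\Lambda_aR_g$ iff $\Pi(g)M_{\wh a}=M_{\wh a}\Pi(g)$ on $L^2(\Omega,\ell_2(\ZZdual))$, and evaluating both sides on $F$ at $\w$ with (\ref{eq:explicitPi}) gives $\wh a(g^\sharp\w)\pi^\w(g)F(g^\sharp\w)$ versus $\wh a(\w)\pi^\w(g)F(g^\sharp\w)$; since $\pi^\w(g)$ is invertible and $g^\sharp$ is a bijection of $\Omega$, equality for all $F$ forces $\wh a(g^\sharp\w)=\wh a(\w)$ a.e.
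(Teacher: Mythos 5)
Your proposal is correct. The $(2)\Leftrightarrow(3)$ part is exactly the paper's computation \eqref{eq:Fourier-tilde-r_g}: the substitution $s=gu$, the identity $\langle g^*\w,u\rangle=\langle\w,gu\rangle$, and the $\ZZdual$-periodicity killing $\nu_\Omega(g^*\w)$, followed by injectivity of the Fourier transform on $\ell_2(\ZZ)$. Where you differ is in the link to $(1)$: the paper derives the operator identity $R_g\Lambda_a=\Lambda_{\tilde r_g(a)}R_g$ directly, by conjugating with $\T$ and computing $\Pi(g)M_{\hat a}=M_{\hat a(g^\sharp\cdot)}\Pi(g)$ from \eqref{eq:explicitPi}, and then cancels the invertible $R_g$, obtaining $(1)\Leftrightarrow(2)\Leftrightarrow(3)$ in one stroke; you instead view $\Lambda_a$ on the $\Gamma$-invariant space $L^2(\RR)$, use that its range operator is the scalar $O_a(\w)=\wh a(\w)\mathcal I$, and invoke the covariance characterization of Theorem \ref{O(g*w)} together with $(\pi^\w(g))^{-1}=\pi^{g^\sharp\w}(g^{-1})$ from \eqref{eq:covariance}, so the covariance relation collapses to $\wh a(g^\sharp\w)=\wh a(\w)$. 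Both are legitimate; your route buys brevity by reusing an already proved general theorem (and correctly notes that $\Gamma$-preserving for $\Lambda_a$ reduces to commuting with the $R_g$, since $\Lambda_a$ is automatically $\ZZ$-preserving), while the paper's route is self-contained and, as a byproduct, produces the explicit intertwining $R_g\Lambda_a=\Lambda_{\tilde r_g(a)}R_g$ — which is essentially your own fallback computation with $\Pi(g)M_{\hat a}$ versus $M_{\hat a}\Pi(g)$. The only point to watch in your fallback is that the a.e.\ exceptional set a priori depends on $F$; fixing one nonvanishing $F$ (e.g.\ a constant nonzero vector) already forces $\wh a(g^\sharp\w)=\wh a(\w)$ a.e., so this is harmless.
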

	
	\begin{proof} 
	For a given $g\in G$, first compute for $\w\in \Omega$,
		\begin{align}\label{eq:Fourier-tilde-r_g}
		\wh{(\tilde{r}_g(a))}(\w) &= \sum_{s\in\ZZ} (\tilde{r}_g(a))(s)e^{-2\pi i \w\cdot s}=\sum_{s\in\ZZ} a(g^{-1}s)e^{-2\pi i \w\cdot s}\nonumber\\
		&=\sum_{s\in\ZZ} a(s)e^{-2\pi i \w\cdot gs}	=	\sum_{s\in\ZZ} a(s)e^{-2\pi i g^*\w\cdot s}\nonumber\\
		&=\sum_{s\in\ZZ} a(s)e^{-2\pi i g^\sharp\w\cdot s} = \wh{a}(g^\sharp\w).
		\end{align}
		Now, recalling that $\Pi(g)=\T R_g \T^{-1}$, we see that
		\begin{equation}\label{eq:RgLa}
		R_g\Lambda_a = R_g\T^{-1}M_{\hat{a}}\T = \T^{-1}\Pi(g)M_{\hat{a}}\T.
		\end{equation}
		On the other hand, observe that for $F\in L^2(\Omega, \ell_2(\ZZdual))$ and using \eqref{eq:explicitPi}
		\begin{align*}
			\Pi(g)M_{\hat{a}}F(\w) &= \Pi(g) \,\wh{a}(\w)\,F(\w) = \pi^\w(g)\, \wh{a}(g^\sharp \w)\,F(g^\sharp\w) \\
			&= \wh{a}(g^\sharp \w) \,\pi^\w(g)\,F(g^\sharp\w) = M_{\hat{a}(g^\sharp \cdot)} \,\Pi(g)F(\w).
		\end{align*}
		From the last equality, together with \eqref{eq:Fourier-tilde-r_g} and \eqref{eq:RgLa} it follows that
		\begin{equation*}
			R_g\Lambda_a = \T^{-1}  M_{\hat{a}(g^\sharp \cdot)} \,\Pi(g)\T = \T^{-1}  M_{\hat{a}(g^\sharp \cdot)} \,\T R_g = \Lambda_{\tilde{r}_g(a)} R_g. 
		\end{equation*} 

		Then, $R_g\Lambda_a = \Lambda_a R_g$ if and only if $\Lambda_{\tilde{r}_g(a)} R_g = \Lambda_a R_g$. Since $R_g$ is invertible, this holds if and only if		
		$\tilde{r}(g)(a) = a$ or, equivalently, $\wh{a}(g^{\sharp}\w)=\wh{a}(\w)$ for a.e. $\w\in \Omega.$
	\end{proof}
	
	If $a\in\ell_2(\ZZ)$ is a sequence of bounded spectrum which satisfies any of the conditions of Proposition \ref{prop:Lambda_a-G-preserving} and $V\subset L^2(\RR)$ is $\Gamma$-invariant, then  $\Lambda_a(V)\subseteq V$ and $\Lambda_a:V\to V$ is $\Gamma$-preserving.
	
	\begin{rem}
		If the group $G$ is infinite, often this class of operators is very small. Indeed, for any $s_0\in\ZZ$, by the invariance $\eqref{invariance-a-r_g}$ of Proposition \ref{prop:Lambda_a-G-preserving}, we have that
		$$\sum_{s\in\ZZ} |a(s)|^2 \geq \sum_{s\in\{g^{-1}s_0\,:\,g\in G\} }  |a(s)|^2 = \#\{g^{-1}s_0\,:\,g\in G\} .|a(s_0)|^2.$$
		Since the sequence $a$ is in $\ell_2(\ZZ)$, for every $s\in\ZZ$ where $a(s)\neq 0$ we have
		\begin{equation}\label{eq:finite-orbits}
			\#\{gs\,:\,g\in G\}<\infty.
		\end{equation}
		For example, consider the group of translations and shears in $\R^2$. That is, $\RR = \R^2$, $\ZZ = \Z^2$ and $G = \{\binom{1 \ k}{0 \ 1} \, : \, k \in \Z\}$, which preserves the lattice $\Z^2$. For $s=(s_1,s_2)\in\Z^2$ we have that $gs=(s_1+ks_2,s_2)$. Hence, if $s_2\neq 0$ then $\#\{gs\,:\,g\in G\}=\infty$ and so $a(s)=0$ necessarily. Thus, the operators of this kind must be of the form
		$$\Lambda_{a}=\sum_{s_1\in\Z} a(s_1,0) T_{(s_1,0)}.$$

%		For example, consider the group of translations and shears in $\R^2$. That is, $\RR = \R^2$, $\ZZ = \Z^2$ and $G = \{\binom{1 \ k}{0 \ 1} \, : \, k \in \Z\}$, which preserves the lattice $\Z^2$. Assume $a\in\ell_2(\Z^2)$ is of bounded spectrum and that $\Lambda_a$ commutes with $R_g$ for every $g\in G$. For every $s_0=(s_1,s_2)\in\Z^2$, by the invariance $\eqref{invariance-a-r_g}$ of Proposition \ref{prop:Lambda_a-G-preserving} we have that 
%		$$\sum_{s\in\Z^2} |a(s)|^2 \geq \sum_{s\in\{g^{-1}s_0\,:\,g\in G\} }  |a(s)|^2 = \#\{g^{-1}s_0\,:\,g\in G\} .|a(s_0)|^2 $$
%		% \geq \sum_{g\in G} |(\tilde{r}_g(a))(s_0)|^2= \sum_{k\in\Z} |a(s_1-ks_2,s_2)|^2.$$
%		Hence, if $s_2\neq 0$ then the only possible way this could happen is if $a(s_0)=0$. And so the operators of this kind must be of the form
%		$$\Lambda_{a}=\sum_{s_1\in\Z} a(s_1,0) T_{(s_1,0)}.$$
		
		Furthermore, if $G$ were an infinite group acting faithfully over $\ZZ$, then every operator $\Lambda_a$ which commutes with $R_g$ for every $g\in G$ must satisfy that $a(s)=0$ for every $s\in\ZZ\setminus\{0\}$.
	\end{rem}

	\subsection{$\Gamma$-diagonalization}\label{sec:GD}

	Now, we aim to find conditions on $\ZZ$-preserving operators in order to obtain a $\ZZ$-diagonalization like in Definition \ref{def:s-diag} where each $\ZZ$-eigenvalue of the decomposition commute with the unitary representation $T_kR_g$ and each $\ZZ$-eigenspace is $\Gamma$-invariant.
		
	\begin{defi}
		Let $V\subset L^2(\RR)$ be a $\Gamma$-invariant space and $L:V\to V$ a bounded $\Gamma$-preserving operator. Let $a\in\ell_2(\ZZ)$ be of bounded spectrum, we say that $\Lambda_a:V\to V$ is a $\Gamma$-eigenvalue of $L$ if $\tilde{r}_g(a) = a$ for every $g\in G$ and $\Lambda_a$ is a $\ZZ$-eigenvalue of $L$, i.e.
		$$V_a := \ker (L-\Lambda_a)\neq \{0\}.$$
		
		Furthermore, we will say that $L$ is $\Gamma$-diagonalizable if it admits a $\ZZ$-diagonalization $\{a_j\}_{j\in I}$ of $L$ where $\Lambda_{a_j}$ is a $\Gamma$-eigenvalue for every $j\in I$.
	\end{defi}
	Observe that, in this case, the $\ZZ$-eigenspace $V_a$ associated to a $\Gamma$-eigenvalue $\Lambda_a$ is a $\Gamma$-invariant subspace of $V$. In particular, if $L$ is $\Gamma$-diagonalizable, then
	$$L = \sum_{j\in I} \Lambda_{a_j} P_{V_{a_j}}$$ where each $V_{a_j}$ is  $\Ga$-invariant subspace and the convergence of the series is in the strong operator topology sense.

	In what follows, we will assume that there exists a Borel set $\Omega_0\subset \Omega$ which is a \emph{transversal} for the action of $G$ on $\Omega$, that is, $\Omega_0$ intersects each orbit of the action of $G$ on $\Omega$ in exactly one point. We remark that a Borel transversal for the action of $G$ on $\Omega$ is not necessarily a tiling of $\Omega$, i.e. a set such that $\{g^\sharp\Omega_0\}_{g\in G}$ is an a.e. partition of $\Omega$.
	
	When $G$ is finite, the existence of such set is ensured by \cite[Theorem 12.16]{Kech1995}. Moreover, the existence of a Borel transversal is equivalent to the existence of a Borel \emph{selector} for the action of $G$ on $\Omega$ (see \cite{Kech1995}), that is, a Borel function $s:\Omega\to\Omega$ such that for every $\w,\w'\in \Omega$, we have that 
	$$\w'\in O_{G}(\w)\,\Rightarrow\, s(\w)=s(\w')\in O_{G}(\w),$$ where $$O_G(\w)=\{\w'\in\Omega\,:\,\w'=g^\sharp\w,\,g\in G\}.$$

	Under this assumption we are able to prove our final goal. In the next theorem we show that a normal $\ZZ$-diagonalizable operator which is $\Gamma$-preserving always admits a $\Gamma$-diagonalization.
	
	\begin{theo}\label{thm:s-diag-iff-gamma-diag}
		Let $V\subset L^2(\RR)$ be a $\Gamma$-invariant space and $L:V\to V$ a bounded normal $\Gamma$-preserving operator. Then, $L$ is $\Gamma$-diagonalizable if and only if it is $\ZZ$-diagonalizable.
	\end{theo}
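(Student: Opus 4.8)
The forward implication needs no work: by definition a $\Gamma$-diagonalization of $L$ is, in particular, a $\ZZ$-diagonalization, so if $L$ is $\Gamma$-diagonalizable it is $\ZZ$-diagonalizable. The whole content is the converse, and the plan is to start from an arbitrary $\ZZ$-diagonalization $\{a_j\}_{j\in I}$ of $L$ and replace it by a new $\ZZ$-diagonalization $\{\tilde a_j\}$ in which every symbol $\wh{\tilde a_j}$ is constant along the $G$-orbits on $\Omega$; by the equivalence \eqref{invariance-a-r_g}$\Leftrightarrow$\eqref{invariance-a} in Proposition \ref{prop:Lambda_a-G-preserving} this is exactly the requirement $\tilde r_g(\tilde a_j)=\tilde a_j$ for all $g\in G$, which together with $V_{\tilde a_j}\ne\{0\}$ makes every $\Lambda_{\tilde a_j}$ a $\Gamma$-eigenvalue and so exhibits $L$ as $\Gamma$-diagonalizable. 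One should note that Theorem \ref{thm:diagonalizable-fibers-s-diag} cannot be used as a shortcut: $\ZZ$-diagonalizability does not force the eigenvalues of $O(\w)$ to be isolated in $\sigma(O(\w))$, so we must carry along the functions $\wh a_j$ from the given diagonalization, which — unlike arbitrary Castaing selections of the spectrum — genuinely exhaust $\sigma_p(O(\w))$ rather than merely a dense subset.

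From the hypotheses I record what is needed. Since $L$ is $\ZZ$-diagonalizable (and normal), Theorem \ref{thm:L-diag-O-diag}\,\eqref{thm:L-diag-O-diag1} gives that $O(\w)$ is diagonalizable for a.e.\ $\w$, with the orthogonal decomposition $\J(\w)=\bigpoplus_{j\in I}\ker(O(\w)-\wh a_j(\w)\mathcal I_\w)$ and $\sigma_p(O(\w))\subseteq\{\wh a_j(\w):j\in I\}$ for a.e.\ $\w$. Replacing $\wh a_j$ by the constant $2\|L\|$ on the set $\{|\wh a_j|>\|L\|\}$ — where $\wh a_j(\w)\notin\sigma(O(\w))$, so the corresponding kernel is $\{0\}$ and the decomposition is unchanged — I may assume each $\wh a_j$ takes values in $\{|z|\le 2\|L\|\}$. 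Since $L$ is $\Gamma$-preserving, Proposition \ref{prop:spectrum-invariant}\,\eqref{prop:spectrum-invariant-2} gives $\sigma_p(O(\w))=\sigma_p(O(g^\sharp\w))$ for every $g\in G$ and a.e.\ $\w$, hence — $G$ being countable — simultaneously for all $g$ outside one null set. Throughout I use that each $g^\sharp$ is a measure-preserving Borel bijection of $\Omega$, so $h^\sharp(N)$ is null whenever $N$ is.

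The core of the argument is a symmetrization by the Borel selector $s:\Omega\to\Omega$ attached to the transversal $\Omega_0$ (so $s(\w)\in O_G(\w)$ and $s$ is constant on each $G$-orbit). Put $c_j:=\wh a_j\circ s$. Each $c_j$ is bounded and measurable, and it is $G$-invariant because $s(h^\sharp\w)=s(\w)$ for every $h\in G$. Moreover $s^{-1}(N)$ is null for every null set $N$ — it is contained in $\bigcup_{g\in G}g^\sharp(N)$ by the defining property of $s$ — so for a.e.\ $\w$ one has $\sigma_p(O(\w))=\sigma_p(O(s(\w)))\subseteq\{\wh a_j(s(\w)):j\in I\}=\{c_j(\w):j\in I\}$; thus the $G$-invariant family $\{c_j\}_{j\in I}$ still exhausts $\sigma_p(O(\w))$ a.e. Finally, to make the functions pairwise distinct I run the inductive surgery from the proof of Lemma \ref{lem:castaign-no-multiplicity}: at step $j$ replace $c_j$ by the constant $\|L\|+1+j$, which lies outside $\{|z|\le\|L\|\}\supseteq\sigma(O(\w))$, on the set where $c_j$ coincides with some already-constructed function. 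That set is a coincidence set of $G$-invariant functions, hence $G$-invariant, and the replacement value is a constant, so the resulting $d_j$ remain bounded, measurable and $G$-invariant, they are pairwise distinct at every point, and they still satisfy $\sigma_p(O(\w))\subseteq\{d_j(\w):j\in\N\}$ a.e., since a discarded value $c_j(\w)$ already coincided with an earlier $d_i(\w)$. I expect the main technical obstacle to lie precisely here: the transversal $\Omega_0$ may be a Lebesgue-null set, so ``a.e.\ on $\Omega_0$'' is meaningless and one must instead control preimages $s^{-1}(N)$ of null sets, and one must keep $G$-invariance alive through the distinctness surgery.

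To conclude, for a.e.\ $\w$ the diagonalizability of $O(\w)$ together with the pointwise distinctness of the $d_j(\w)$ yields the orthogonal decomposition $\J(\w)=\bigpoplus_{j}\ker(O(\w)-d_j(\w)\mathcal I_\w)$; discard those $j$ for which this kernel is $\{0\}$ for a.e.\ $\w$, leaving an at most countable set $J$. For $j\in J$, since $\Omega$ has finite Haar measure and $d_j\in L^\infty(\Omega)$ there is a bounded-spectrum $\tilde a_j\in\ell_2(\ZZ)$ with $\wh{\tilde a_j}=d_j$; its $G$-invariance and Proposition \ref{prop:Lambda_a-G-preserving} give $\tilde r_g(\tilde a_j)=\tilde a_j$ for all $g$, so $\Lambda_{\tilde a_j}$ commutes with every $R_g$ and is $\Gamma$-preserving. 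By Proposition \ref{prop:eigen} the range function of $V_{\tilde a_j}=\ker(L-\Lambda_{\tilde a_j})$ is $\w\mapsto\ker(O(\w)-d_j(\w)\mathcal I_\w)$, which is nonzero on a set of positive measure, so $V_{\tilde a_j}\ne\{0\}$ and each $\Lambda_{\tilde a_j}$ is a $\Gamma$-eigenvalue. Finally $\J(\w)=\bigpoplus_{j\in J}\ker(O(\w)-d_j(\w)\mathcal I_\w)$ is exactly the fiberwise orthogonal sum of the range functions of the spaces $V_{\tilde a_j}$, so Theorem \ref{Th:Helson} and the correspondence between range functions and $\ZZ$-invariant spaces give $V=\bigpoplus_{j\in J}V_{\tilde a_j}$, as in the proof of Theorem \ref{thm:L-diag-O-diag}\,\eqref{thm:L-diag-O-diag3}. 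Hence $\{\tilde a_j\}_{j\in J}$ is a $\ZZ$-diagonalization of $L$ consisting of $\Gamma$-eigenvalues, that is, $L$ is $\Gamma$-diagonalizable.
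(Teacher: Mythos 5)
Your proof is correct and follows essentially the same route as the paper: compose the symbols $\wh{a}_j$ with the Borel selector $s$ to obtain bounded $G$-invariant functions, use Proposition \ref{prop:spectrum-invariant} to keep covering $\sigma_p(O(\w))$, and rebuild the fiberwise decomposition into $\Gamma$-eigenspaces. Your extra steps (the truncation of $\wh{a}_j$ and the pointwise-distinctness surgery) are harmless but unnecessary, while your control of $s^{-1}(N)$ for null sets $N$ just makes explicit a measurability detail the paper leaves implicit.
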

	\begin{proof}
		We just need to prove that if $L$ is $\ZZ$-diagonalizable, then it admits a $\ZZ$-diagonalization conformed by $\Gamma$-eigenvalues. 
		Assume that $\{a_j\}_{j\in I}$ is a $\ZZ$-diagonalization of $L$. By Theorem \ref{thm:L-diag-O-diag}, $O(\w)$ is diagonalizable and $\sigma_p(O(\w))\subset\{\wh{a}_j(\w)\,:\,j\in I\}$ for a.e. $\w\in\Omega$.
		
	  	Now, let $s:\Omega\to\Omega$ be a Borel selector for the action of $G$ on $\Omega$. In particular, we have that $\sigma_p\left(O(s(\w))\right) \subset\{\wh{a}_j(s(\w))\,:\,j\in I\}$ for a.e. $\w \in\Omega$.
		Moreover, by \eqref{prop:spectrum-invariant-2} in Proposition \ref{prop:spectrum-invariant} we see that $\sigma_p(O(\w)) = \sigma_p\left(O(s(\w))\right)$ for a.e. $\w\in\Omega$. Thus, taking $\lambda_j(\w)=\wh{a}_j\circ s(\w)$ we get that
		\begin{equation}
			\sigma_p(O(\w))\subset\{\lambda_j(\w)\,:\,j\in I\}
		\end{equation} 
		for a.e. $\w\in\Omega$. Since $s$ is a selector, we see that $\lambda_j(g^\sharp\w)=\lambda_j(\w)$  for every $j\in I$, $g\in G$ and a.e. $\w\in\Omega$. Also, given that $s$ is a Borel selector and $\wh{a}_j\in L^\infty(\Omega)$, we obtain that $\lambda_j\in L^\infty(\Omega)$. 
		
		Since $O(\w)$ is diagonalizable the following orthogonal decomposition holds
		\begin{equation}\label{eq:J-decomp-gamma}
			\J(\w)=\underset{j\in I}{\bigpoplus} \ker(O(\w)-\lambda_{j}(\w)\mathcal I_{\w})
		\end{equation} 
		for a.e. $\w\in \Omega$.
		We now discard those functions $\lambda_j$ such that $\ker(O(\w)-\lambda_{j}(\w)\mathcal I_{\w})=\{0\}$ for a.e. $\w\in\Omega$. For each remaining $j$, there exists a sequence $b_j\in\ell_2(\ZZ)$ of bounded spectrum such that $\widehat{b}_j=\lambda_j$. So, $\Lambda_{b_j}$ is a $\Gamma$-eigenvalue of $L$ for every $j$ and by \eqref{eq:J-decomp-gamma} we conclude that 
		\begin{equation}
			V=\underset{j}{\bigpoplus} V_{b_j}.
		\end{equation}
		Hence, $L$ is $\Gamma$-diagonalizable.
		\end{proof}
	
{\bf Acknowledgements}: We thank the anonymous referees for the very careful reading of the manuscript and many positive suggestions which helped improve the paper.
	
	This project has received funding from the European Un\-ion's Horizon 2020 research and innovation programme under the Marie Sk\l odowska-Curie grant agreement No 777822. In addition, D. Barbieri and E. Hern\'andez were supported by Grants MTM2016-76566-P and PID2019-105599GB-I00 (Ministerio de Ciencia a Innovaci\'on, Spain). C. Cabrelli, D. Carbajal and U. Molter were supported by Grants UBACyT 20020170100430BA (University of Buenos Aires), PIP11220150100355 (CONICET) and PICT 2018-03399 (Ministery of Science and Technology from Argentina).

\end{document}